\newtheorem{xxxexample}{Example}[section]
\newenvironment{example}{\begin{xxxexample}}{\end{xxxexample}\vspace{-0.5cm}\rightline{$\Diamond$}}
\title{Computation of Balanced Equivalence Relations  and their Lattice for a Coupled Cell Network}
\author{Hiroko Kamei\thanks{Division of Mathematics, University of Dundee, Dundee, DD1 4HN, UK ({\tt hiroko@maths.dundee.ac.uk}).}
        \and Peter J. A. Cock\thanks{James Hutton Institute, Dundee, DD2 5DA, UK ({\tt Peter.Cock@hutton.ac.uk})}}
\date{Submitted December 28, 2010; revision submitted November 2011}
\begin{document}

\maketitle

\begin{abstract}
A coupled cell network describes interacting (\textit{coupled}) individual systems (\textit{cells}). As in networks from real applications, coupled cell networks can represent \textit{inhomogeneous} networks where different types of cells interact with each other in different ways, which can be represented graphically by different symbols, or abstractly by equivalence relations.

Various synchronous behaviors, from full synchrony to partial synchrony,
can be observed for a given network. Patterns of synchrony, which do not depend on specific dynamics of the network, but only on the network structure, are associated with a special type of partition of cells, termed \textit{balanced equivalence relations}. Algorithms in Aldis (2008) and Belykh and Hasler (2011) find the unique pattern of synchrony with the least clusters. In this paper, we compute the set of all possible patterns of synchrony and show their hierarchy structure as a \textit{complete lattice}.

We represent the network structure of a given coupled cell network by a symbolic adjacency matrix encoding the different coupling types. We show that balanced equivalence relations can be determined by a matrix computation on the adjacency matrix which forms a block structure for each balanced equivalence relation. This leads to a computer algorithm to search for all possible balanced equivalence relations.
Our computer program outputs the balanced equivalence relations, quotient matrices, and a complete lattice for user specified coupled cell networks.
Finding the balanced equivalence relations of any network of up to $15$ nodes is tractable, but for larger networks this depends on the pattern of synchrony with least clusters.
\end{abstract}

\begin{keywords} 
couple cell networks; synchrony; balanced equivalence relations; lattice
\end{keywords}

\begin{AMS}
15A72, 34C14, 06B23, 90C35
\end{AMS}

\pagestyle{myheadings}
\thispagestyle{plain}
\markboth{HIROKO KAMEI AND PETER J. A. COCK}{COMPUTATION OF BALANCED EQUIVALENCE RELATIONS}

\section{Introduction}
In many areas of science, interacting objects can be represented as a network. Examples can be found in biological, chemical, physical, technological and social systems~\cite{Newman-2003,Strogatz-2001}. One important dynamical feature of networks is the possibility of synchrony, which occurs when distinct individuals exhibit identical dynamics. Synchronization of initially distinct dynamics, such as that appearing in fireflies, coupled lasers and coupled chaotic systems have been  extensively studied (see \cite{Boccaletti,Arenas,Wang} for reviews). Many studies have investigated the role of synchrony in a wide range of cognitive and information processing, including recently the possible relevance of neural synchrony in pathological brain states, such as epilepsy and Alzheimer's disease (reviewed in \cite{Uhlhaas}).

In this paper however we are interested in not only full synchronization, but also partial synchronization where a network breaks into sub-networks, called clusters, such that all individual systems within one cluster are perfectly synchronised.  In coupled chaotic systems, such partial synchronization (or clustering) has been attracting great interest \cite{Belykh2000,Zhang2001,Belykh2008,Wang2010}. Partial synchronies can also appear as a result of synchrony breaking. Suppose all individuals of a network are initially synchronized, but at some point lose coherence -- giving synchronized sub-networks or even differently behaving individuals. Differentiation of (biological) cells \cite{Koseska2010}, speciation~\cite{Stewart-speciation}, desynchronization of coupled oscillators~\cite{Kuramoto}, or the loss of coherence in lasers can be interpreted in this way. We are interested in computing those potential partial synchronies which are solely determined by the network architecture (topology), rather that any specific details of the network dynamics (e.g. parameter values or function forms).

Mathematically such interaction networks are described as a directed graph~\cite{Tutte,Wilson}. Nodes correspond to the individuals, and arrows (edges) denote their interactions. Coupled cell networks are a general formalism using a directed graph to describe such interacting individuals (see~\cite{Stewart1,Golubitsky1,Golubitsky-and-Stewart-2006}). In this settings cells correspond to the individuals (graph nodes), and there can be multiple types of cell, and similarly multiple types of arrow (graph edges). The topology of the network is described by an adjacency matrix, using symbolic entries for different arrow types.

In this paper we describe how to compute all possible partial synchronies which are a consequence of the topology (i.e. the adjacency matrix for the graph) of a given coupled cell network.
Such partial synchronies are represented as a partition of the network cells, termed a \textit{balanced equivalence relation} (also referred to as a balanced coloring).
These impose a block structure on the adjacency matrix, leading to a computer algorithm which determines all possible balanced equivalence relations using matrix computations.
The set of all possible balanced equivalence relations is partially ordered, and forms a complete lattice (see \cite{Stewart}). 
In this paper, we compute all possible balanced equivalence relations and their complete lattice.
Existing algorithms in \citep{Aldis,BelykhHasler2011} can find the top lattice node, i.e. the maximal balanced equivalence relations from a given network topology.
We use the top lattice node in order to reduce the search space for finding all possible balanced equivalence relations, and so speed up constructing the complete lattice.

The supplementary material includes an implementation of the algorithms described,
and a hybrid of the top lattice node algorithms from \cite{Aldis,BelykhHasler2011},
using the freely available programming language Python (http://www.python.org) and Numerical Python library (http://numpy.scipy.org) for matrix support. We hope that interested researchers will be able to take this script and run it on networks of interest by entering the adjacency matrices. The code prints out balanced equivalence relations, their quotient network adjacency matrices, and the associated lattice structure (as text). Additionally, provided GraphViz \cite{Graphviz} and associated Python libraries for calling it are installed, figures of the network and lattice are also produced.

Our implementation effectively finds all the balanced equivalence relations for any coupled cell network of up to $15$ cells.For larger networks, the speed of computation depends on the total number of possible partitions to check, which depends on the clustering pattern of the maximal balanced equivalence relation, but not directly the size of the network. Inhomogeneous networks are most tractable, and an example of a $30$ cell network is discussed.

This paper is organized as follows. 
In Section~\ref{sec:preliminaries}, we recall some basic features of the coupled cell formalism, then review the basics of lattice theory. 
In Section~\ref{sec:matrix_computation}, we show that finding a balanced equivalence relation is equivalent to finding a particular type of invariant subspace of a linear map, which is represented by the adjacency matrix of a given coupled cell network. We then show that an adjacency matrix, which leaves such subspaces invariant, has a block structure. This matrix property leads to the computer algorithm discussed in Section~\ref{sec:algorithm} which determines all possible balanced equivalence relations of a given network, allowing display of the corresponding lattice of the set of all balanced equivalence relations.
Finally in Section \ref{sec:examples}, we demonstrate how the algorithm can be applied to several examples from the literature, with conclusions in Section \ref{sec:conclusions}.

\section{Preliminaries}
\label{sec:preliminaries}
\subsection{Coupled cell network and associated coupled cell system} 

A \textit{coupled cell network} describes interacting individual systems schematically by a finite directed graph $\mathcal{G}=(\mathcal{C},\mathcal{E},\sim_{C},\sim_{E})$. Here $\mathcal{C}=\{c_{1},c_{2}\ldots,c_{n}\}$ is the set of nodes (\textit{cells}), $\mathcal{E}=\{e_{1},e_{2},\ldots,e_{m}\}$ is the arrows between them (\textit{couplings}), and equivalence relations $\sim_{C}$ and $\sim_{E}$ describe different types of cells and couplings.

Different cell types (equivalence relation $\sim_{C}$) can be \textit{labelled} with symbols such as circles, squares, or triangles. Similarly different arrow types ($\sim_{E}$) can be shown using different kinds of lines (solid, dashed, dotted).

Each cell $c$ is a dynamical system with variables $x_c$ in \textit{cell phase space} $P_{c}$, for simplicity a finite-dimensional real vector space $\mathbb{R}^{r}$, where $r$ may depend on $c$.
Cells of the same type must have the same phase space, for $c, d \in \mathcal{C}$, $c\sim_{c}d \Rightarrow  P_{c}=P_{d}$.

Each arrow $e\in\mathcal{E}$ connects a tail node $\mathcal{T}(e)$ to a head node $\mathcal{H}(e)$, expressed using maps $\mathcal{H}:\mathcal{E}\rightarrow\mathcal{C}$ and $\mathcal{T}:\mathcal{E}\rightarrow\mathcal{C}$. Arrows of the same type must have matching tail and head cell types, $e_1, e_2 \in \mathcal{E}$, $e_{1}\sim_{E}e_{2} \Rightarrow \mathcal{H}(e_{1})\sim_{C}\mathcal{H}(e_{2}) \text{ and } \mathcal{T}(e_{1})\sim_{C}\mathcal{T}(e_{2})$, ensuring $\sim_{E}$ maintains similar input/output characteristics.

For each cell $c\in\mathcal{C}$ the \textit{input set} of $c$ is defined as $I(c)=\{e\in\mathcal{E}: \mathcal{H}(e)=c\}$, where $e\in I(c)$ is called an \textit{input arrow} of $c$. 
\begin{definition}
The relation $\sim_{I}$ of \textit{input equivalence} on
$\mathcal{C}$ is defined by $c\sim_{I}d$ if and only if there
exists an arrow-type preserving bijection $\beta: I(c)\rightarrow I(d)$ such that $i\sim_{E}\beta(i)$ $\forall i\in I(c)$, and two cells $c$ and $d$ are said to be input isomorphic.
\end{definition}

The \textit{input equivalence} on $\mathcal{C}$ identifies equivalent cells whose input couplings are also equivalent. As a consequence, if $c\sim_{I}d$ then they should have similar dynamics defined in a \textit{coupled cell system}, which is a system of ordinary differential equations (ODEs) associated with a given coupled cell network. Define the \textit{total phase space} to a given $n$-cell coupled cell network $\mathcal{G}$ to be $\displaystyle P=P_{c_{1}}\times\cdots\times P_{c_{n}}$ and employ the coordinate system $(x_{1}, \ldots , x_{n})\in P$, where $x_{i}\in P_{c_{i}}$. The system associated with the cell $c_{i}$ has the form
\begin{displaymath}
\dot{x}_{i} = f_{i}(x_{i},x_{j_{1}},\ldots,x_{j_{q}}),
\end{displaymath}
where the first variable of $f_{i}$ represents the internal dynamics of the cell $c_{i}$ and the remaining $q$ variables $\{x_{j_{1}},\ldots,x_{j_{q}}\}=\mathcal{T}(I(c_{i}))$  represent coupling. In this paper, we employ definitions which permit multiple arrows (some subsets of indices $j_{k}$ are equal) and self-coupling (some $j_{k}$ equal $i$). 

The function $f_{i}$ corresponds to the $i$-th component of an \textit{admissible vector field} $F=(f_{1},\ldots,f_{n})$, which is compatible with the network structure, and depends on a fixed choice of the total phase space $P$. It follows that different components of $F$ are identical if the corresponding cells are input isomorphic, i.e., $f_{c}=f_{d}$ for $c\sim_{I}d$. As a consequence, the number of distinct functions in a coupled cell system corresponds to the number of input equivalence classes. We now define types of coupled cell networks.
\begin{definition}
A \textit{homogeneous} network is a coupled cell network such
that all cells are input isomorphic or identical. If a coupled cell network is not homogeneous, we call it an \textit{inhomogeneous} network. A homogeneous network that has one equivalence class of arrows is said to be \textit{regular}. The \textit{valency} of a homogeneous network is the number of arrows into each cell.
\end{definition}

Table~\ref{tab:coupled_cell_networks} shows various types of coupled cell networks and the corresponding coupled cell systems. Note that, by definition, $c\sim_{I}d\Longrightarrow c\sim_{C}d$ holds, but the converse fails in general. If $I(c)=\emptyset$ and $I(d)=\emptyset$, we say that $c\sim_{I}d$ iff $c\sim_{C}d$ (see for example the network $\mathcal{G}_{1}$ in Table \ref{tab:coupled_cell_networks}).

\begin{table}[p]
\begin{center}
\begin{tabular}{cclc}
\multicolumn{4}{c}{Inhomogeneous networks} \\
\multirow{5}{*}{$\mathcal{G}_{1}$} &
\multirow{5}{*}{\includegraphics[scale=0.2]{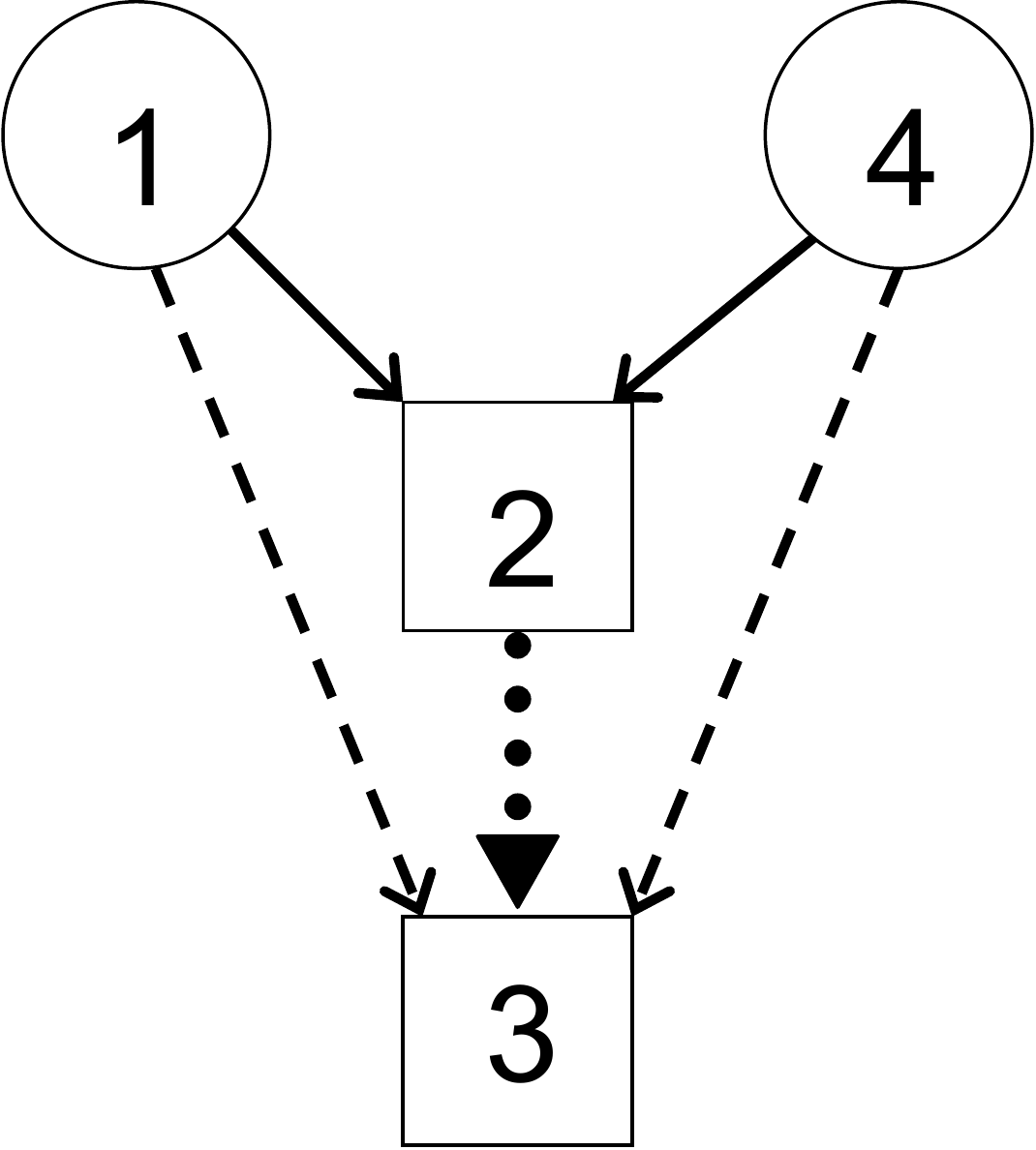}} &
\multirow{5}{*}{$\begin{array}{rl}
\dot{x}_{1}&= f_{1}(x_{1})\\
\dot{x}_{2}&= g_{1}(x_{2},\overline{x_{1},x_{4}})\\
\dot{x}_{3}&= h_{1}(x_{3},x_{2},\overline{x_{1},x_{4}})\\
\dot{x}_{4}&= f_{1}(x_{4})
\end{array}$} &
\multirow{5}{*}{$\left(\begin{array}{cccc}
0 & 0 & 0 & 0\\
e_{1} & 0 & 0 & e_{1}\\
e_{2} & e_{3} & 0 & e_{2}\\
0 & 0 & 0 & 0
\end{array}\right)$} \\
& & & \\
& & & \\
& & & \\
& & & \\
& & & \\
& & & \\
\multirow{5}{*}{$\mathcal{G}_{2}$} &
\multirow{5}{*}{\includegraphics[scale=0.2]{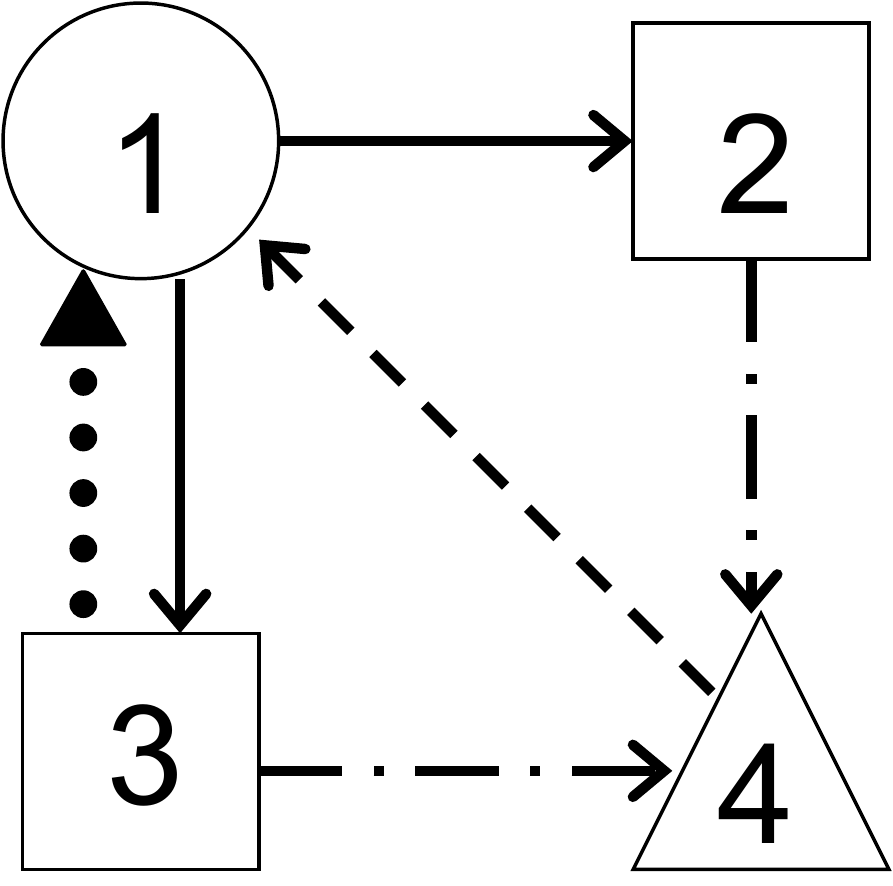}} &
\multirow{5}{*}{$\begin{array}{rl}
\dot{x}_{1}&= f_{2}(x_{1},x_{3},x_{4})\\
\dot{x}_{2}&= g_{2}(x_{2},x_{1})\\
\dot{x}_{3}&= g_{2}(x_{3},x_{1})\\
\dot{x}_{4}&= h_{2}(x_{4},\overline{x_{2},x_{3}})
\end{array}$} &
\multirow{5}{*}{$\left(\begin{array}{cccc}
0 & 0 & e_{3} & e_{2}\\
e_{1} & 0 & 0 & 0\\
e_{1} & 0 & 0 & 0\\
0 & e_{4} & e_{4} & 0
\end{array}\right)$} \\
& & & \\
& & & \\
& & & \\
& & & \\
\multirow{7}{*}{$\mathcal{G}_{3}$} &
\multirow{7}{*}{\includegraphics[scale=0.2]{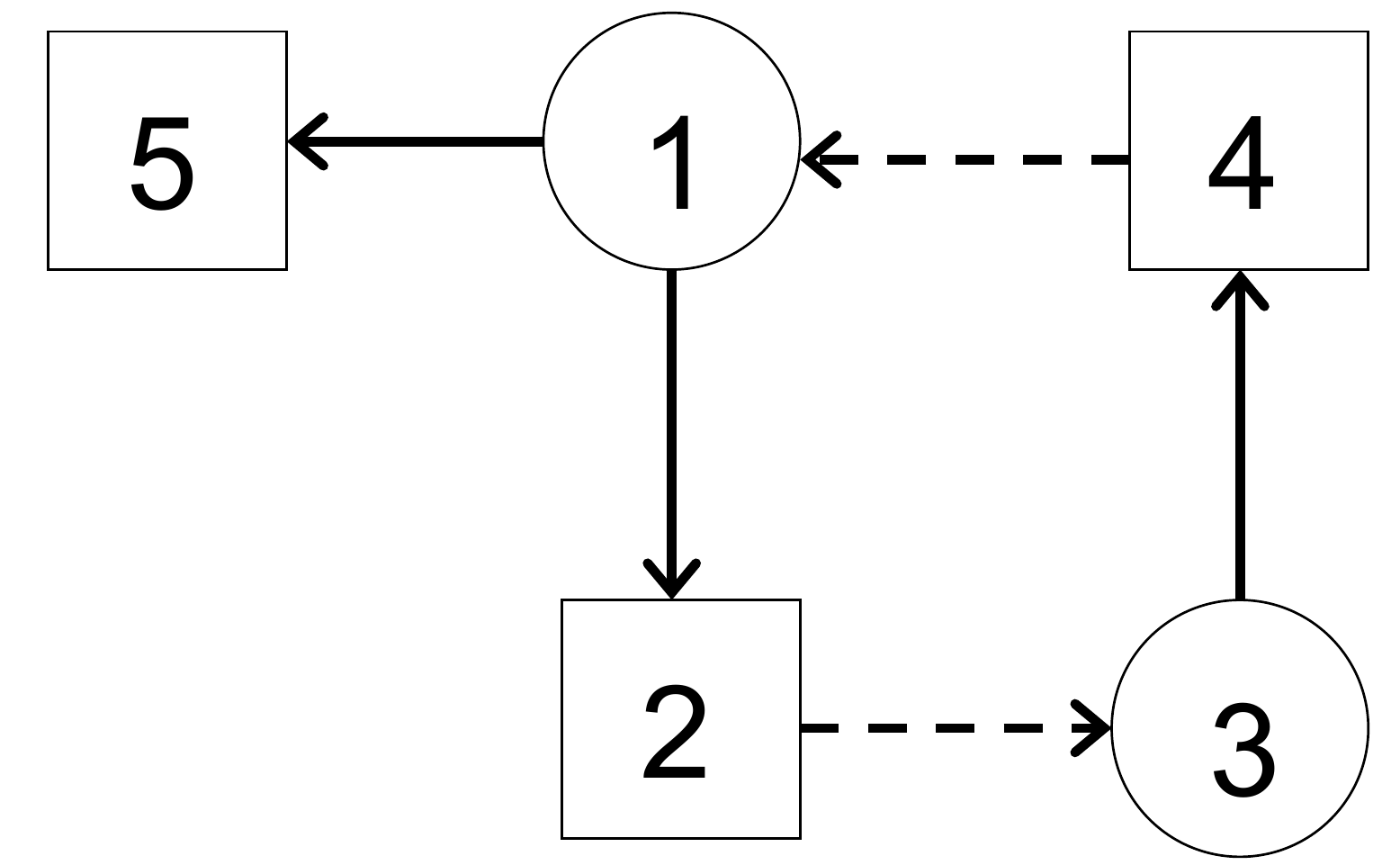}} &
\multirow{7}{*}{$\begin{array}{rl}
\dot{x}_{1}&= f_{3}(x_{1},x_{4})\\
\dot{x}_{2}&= g_{3}(x_{2},x_{1})\\
\dot{x}_{3}&= f_{3}(x_{3},x_{2})\\
\dot{x}_{4}&= g_{3}(x_{4},x_{3})\\
\dot{x}_{5}&= g_{3}(x_{5},x_{1})
\end{array}$} &
\multirow{7}{*}{$\left(\begin{array}{ccccc}
0 & 0 & 0 & e_{2} & 0\\
e_{1} & 0 & 0 & 0 & 0\\
0 & e_{2} & 0 & 0 & 0\\
0 & 0 & e_{1} & 0 & 0\\
e_{1} & 0 & 0 & 0 & 0
\end{array}\right)$} \\
& & & \\
& & & \\
& & & \\
& & &\\
\\
\\
\\
\multicolumn{4}{c}{Homogeneous networks} \\
\multirow{5}{*}{$\mathcal{G}_{4}$} &
\multirow{5}{*}{\includegraphics[scale=0.2]{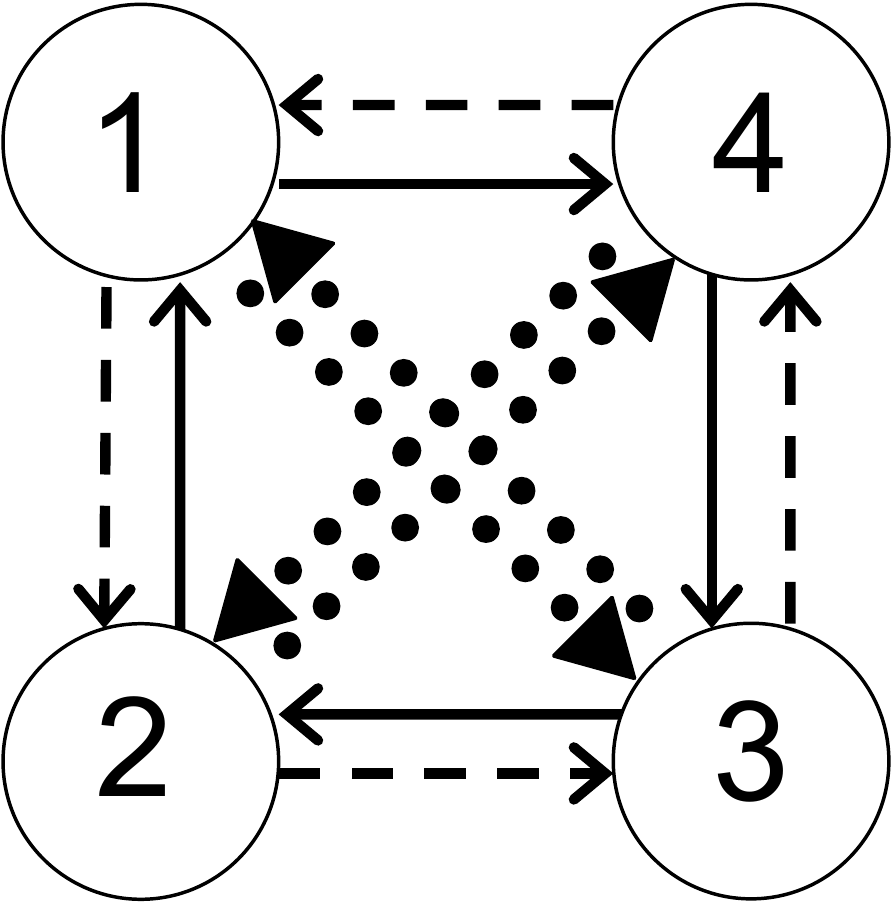}} &
\multirow{5}{*}{$\begin{array}{rl}
\dot{x}_{1}&= f_{4}(x_{1},x_{2},x_{3},x_{4})\\
\dot{x}_{2}&= f_{4}(x_{2},x_{1},x_{3},x_{4})\\
\dot{x}_{3}&= f_{4}(x_{3},x_{2},x_{1},x_{4})\\
\dot{x}_{4}&= f_{4}(x_{4},x_{2},x_{3},x_{1})
\end{array}$} &
\multirow{5}{*}{$\left(\begin{array}{cccc}
0 & e_{1} & e_{3} & e_{2}\\
e_{2} & 0 & e_{1} & e_{3}\\
e_{3} & e_{2} & 0 & e_{1}\\
e_{1} & e_{3} & e_{2} & 0
\end{array}\right)$} \\
& & & \\
& & & \\
& & & \\
& & & \\
& & & \\
\multirow{7}{*}{$\mathcal{G}_{5}$} &
\multirow{7}{*}{\includegraphics[scale=0.2]{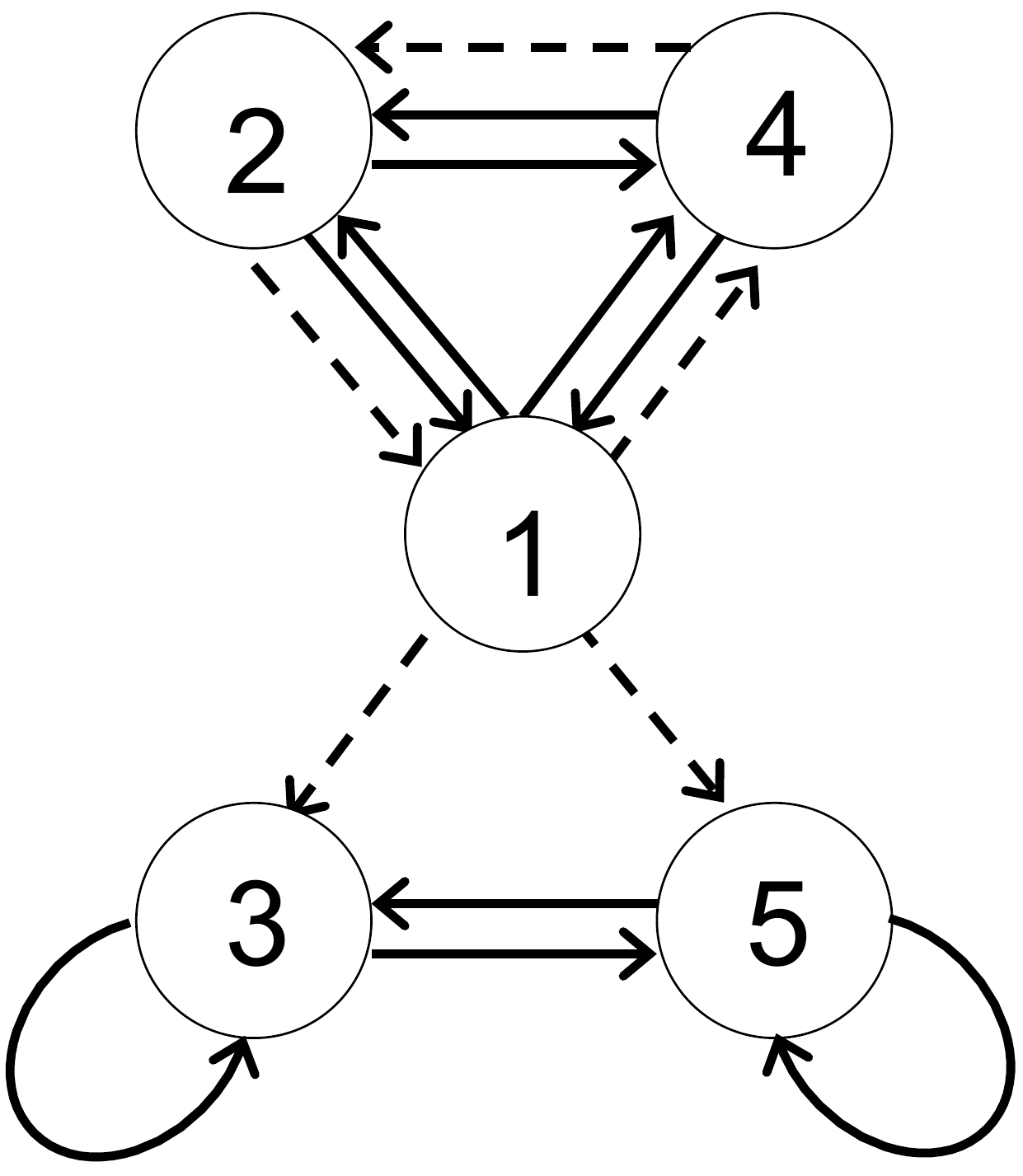}} &
\multirow{7}{*}{$\begin{array}{rl}
\dot{x}_{1}&= f_{5}(x_{1},x_{2},\overline{x_{2},x_{4}})\\
\dot{x}_{2}&= f_{5}(x_{2},x_{4},\overline{x_{1},x_{4}})\\
\dot{x}_{3}&= f_{5}(x_{3},x_{1},\overline{x_{3},x_{5}})\\
\dot{x}_{4}&= f_{5}(x_{4},x_{1},\overline{x_{1},x_{2}})\\
\dot{x}_{5}&= f_{5}(x_{5},x_{1},\overline{x_{3},x_{5}})
\end{array}$} &
\multirow{7}{*}{$\left(\begin{array}{ccccc}
0 & e_{1}+e_{2} & 0 & e_{1}   & 0 \\
e_{1} & 0   & 0 & e_{1}+e_{2} & 0 \\
e_{2} & 0   & e_{1} & 0   & e_{1} \\
e_{1}+e_{2} & e_{1} & 0 & 0 & 0 \\
e_{2} & 0   & e_{1} & 0   & e_{1} 
\end{array}\right)$} \\
& & & \\
& & & \\
& & & \\
& & &\\
\\
\\
\\
\multicolumn{4}{c}{Regular networks} \\
\multirow{7}{*}{$\mathcal{G}_{6}$} &
\multirow{7}{*}{\includegraphics[scale=0.2]{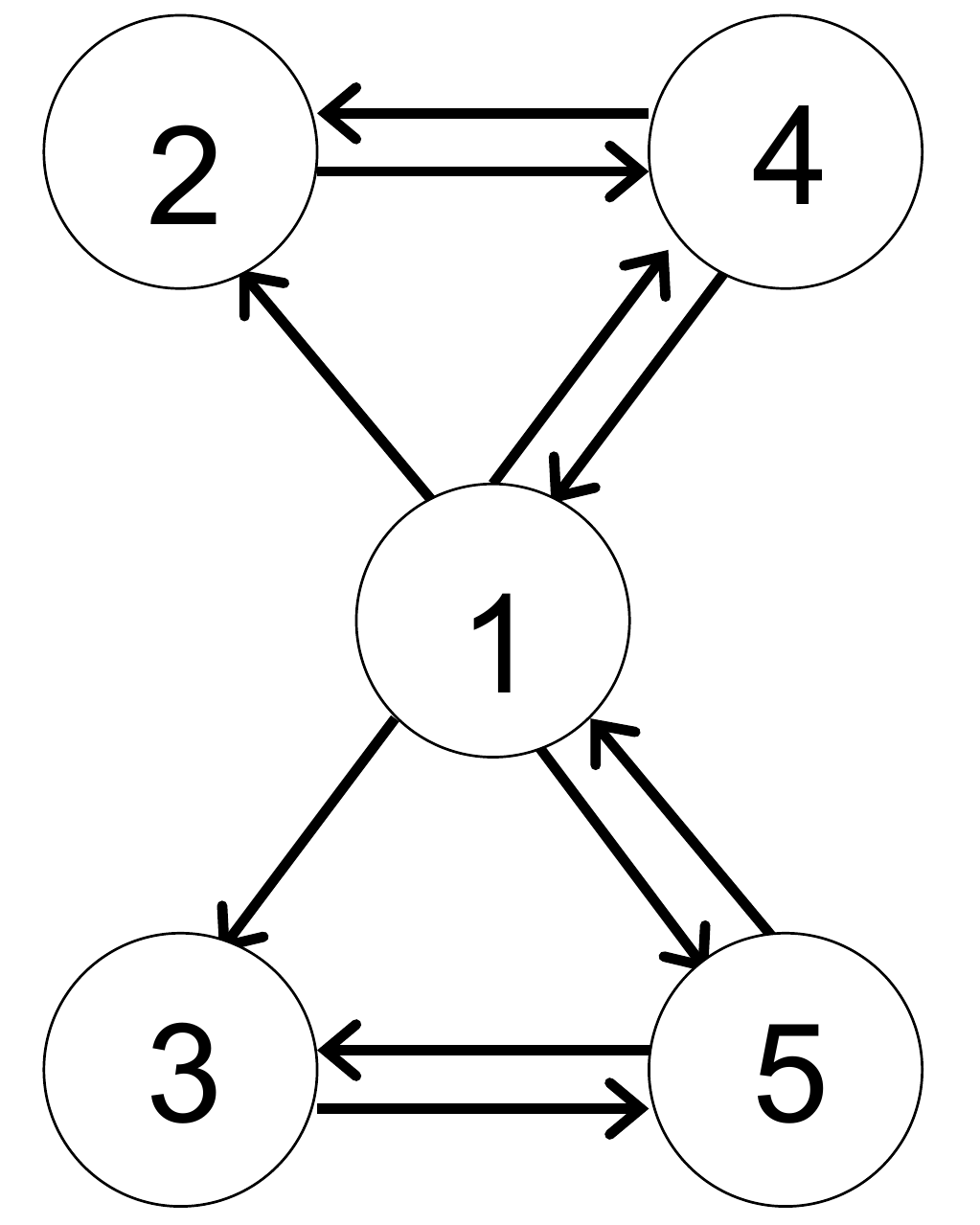}} &
\multirow{7}{*}{$\begin{array}{rl}
\dot{x}_{1}&= f_{6}(x_{1},\overline{x_{4},x_{5}})\\
\dot{x}_{2}&= f_{6}(x_{2},\overline{x_{1},x_{4}})\\
\dot{x}_{3}&= f_{6}(x_{3},\overline{x_{1},x_{5}})\\
\dot{x}_{4}&= f_{6}(x_{4},\overline{x_{1},x_{2}})\\
\dot{x}_{5}&= f_{6}(x_{5},\overline{x_{1},x_{3}})
\end{array}$} &
\multirow{7}{*}{$e_{1}\left(\begin{array}{ccccc}
0 & 0 & 0 & 1 & 1\\
1 & 0 & 0 & 1 & 0\\
1 & 0 & 0 & 0 & 1\\
1 & 1 & 0 & 0 & 0\\
1 & 0 & 1 & 0 & 0
\end{array}\right)$} \\
& & & \\
& & & \\
& & & \\
& & &\\
& & & \\
\multirow{7}{*}{$\mathcal{G}_{7}$} &
\multirow{7}{*}{\includegraphics[scale=0.2]{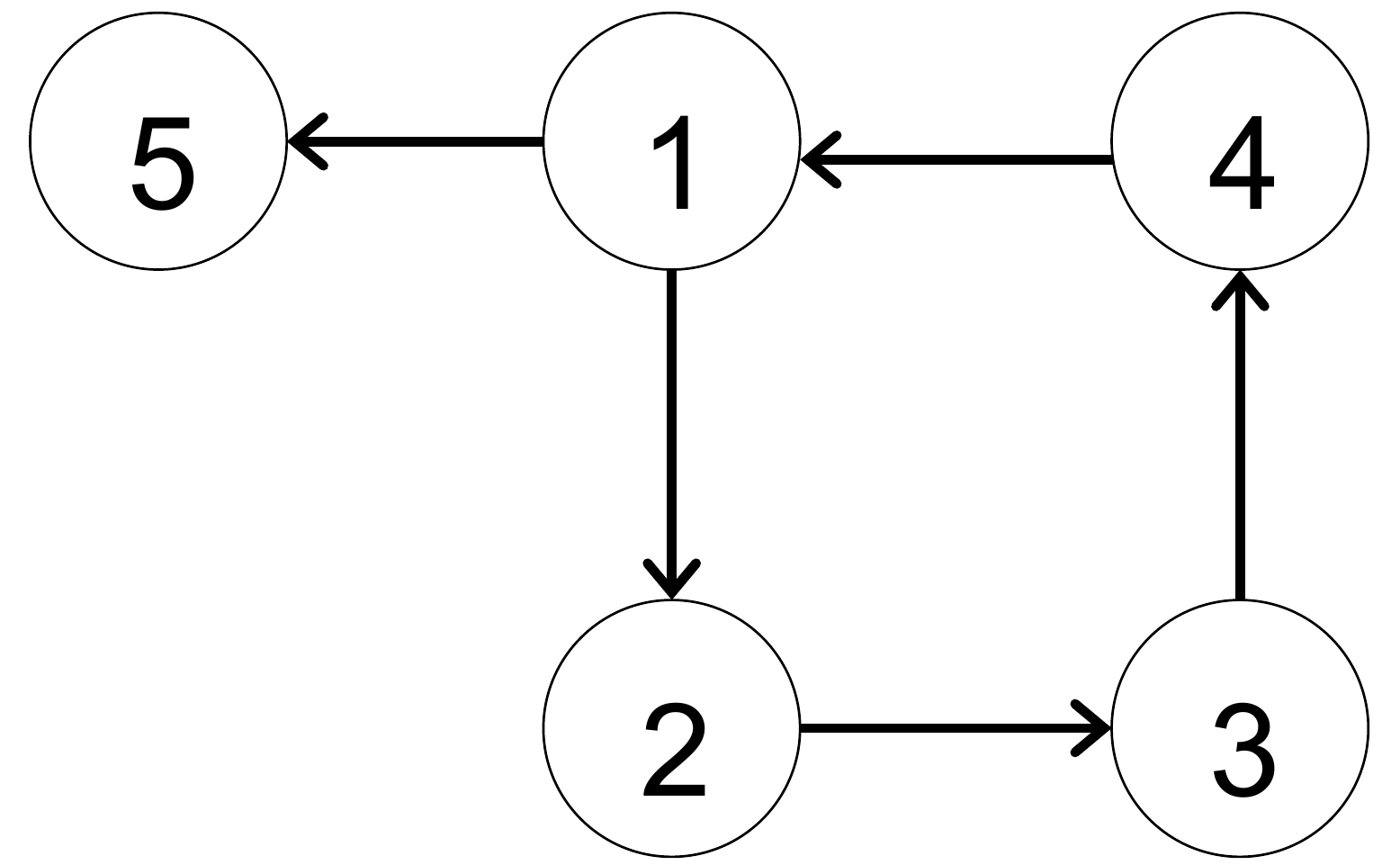}} &
\multirow{7}{*}{$\begin{array}{rl}
\dot{x}_{1}&= f_{7}(x_{1},x_{4})\\
\dot{x}_{2}&= f_{7}(x_{2},x_{1})\\
\dot{x}_{3}&= f_{7}(x_{3},x_{2})\\
\dot{x}_{4}&= f_{7}(x_{4},x_{3})\\
\dot{x}_{5}&= f_{7}(x_{5},x_{1})
\end{array}$} &
\multirow{7}{*}{$e_{1}\left(\begin{array}{ccccc}
0 & 0 & 0 & 1 & 0\\
1 & 0 & 0 & 0 & 0\\
0 & 1 & 0 & 0 & 0\\
0 & 0 & 1 & 0 & 0\\
1 & 0 & 0 & 0 & 0
\end{array}\right)$} \\
& & & \\
& & & \\
& & & \\
& & & \\
& & & \\
& & & \\
\end{tabular}
\end{center}
\caption{Coupled cell networks, and the corresponding coupled cell systems and symbolic adjacency matrices (defined in Section \ref{sec:matrix_computation}). The overline indicates that some couplings from other cells to that cell are identical, i.e., $f(x_{i},x_{j},\overline{x_{k},x_{l}})$ means $f(x_{i},x_{j},x_{k},x_{l})=f(x_{i},x_{j},x_{l},x_{k})$.}
\label{tab:coupled_cell_networks}
\end{table}

\subsection{Balanced equivalence relations and quotient networks}
We say a given coupled cell system has synchrony if (at least) two cells $c$ and $d$ have identical outputs, that is $x_{c}(t)=x_{d}(t)$ $\forall t\in\mathbb{R}$. Synchronous network dynamics solely determined by the network structure is associated with a special type of partitions of cells termed \textit{balanced equivalence relations}, which define a smaller network called the \textit{quotient network} describing synchronous dynamics of the original network.

Let $\bowtie$ be an equivalence relation on $\mathcal{C}$,
partitioning the cells into equivalence classes. For a given equivalence relation $\bowtie$, the corresponding subspace of the total phase space $P$ is defined by
$\triangle_{\bowtie}=\{x\in P: c\bowtie d\Rightarrow x_{c}=x_{d}\}$, which is called a \textit{polydiagonal subspace} of $P$.

Denote by  $\mathcal{F}_{\mathcal{G}}^{P}$ the class of admissible vector fields of a given coupled cell network $\mathcal{G}$ with the total phase space $P$. A polydiagonal subspace is called a \textit{synchrony subspace} (or \textit{balanced polydiagonal})
if it is flow-invariant for every admissible vector field with the
given network architecture. That is,
\begin{displaymath}
F(\triangle_{\bowtie})\subseteq\triangle_{\bowtie}\quad\forall
F\in\mathcal{F}_{\mathcal{G}}^{P}.
\end{displaymath}

Equivalently, if $x(t)$ is a trajectory of any $F\in\mathcal{F}_{\mathcal{G}}^{P}$, with initial condition $x(0)\in\triangle_{\bowtie}$, then $x(t)\in\triangle_{\bowtie}$
for all $t\in\mathbb{R}$. Patterns of such \textit{robust} synchrony are classified by a special type of equivalence relation defined in the following.
\begin{definition}
An equivalence relation on $\mathcal{C}$ is \textit{balanced} if
for every $c,d\in\mathcal{C}$ with $c\bowtie d$, there exists an
input isomorphism $\beta$ such that
$\mathcal{T}(i)\bowtie\mathcal{T}(\beta(i))$ for all $i\in I(c)$, where the map $\mathcal{T}(e)$ returns the tail node of an arrow $e\in\mathcal{E}$.
\end{definition}

In particular, the existence of $\beta$ implies $c\sim_{I}d$. Hence,
balanced equivalence relations can only occur between input isomorphic cells. A necessary and sufficient condition for a polydiagonal subspace to be a synchrony subspace is given by:
\begin{theorem}
\label{thm:balanced} 
An equivalence relation $\bowtie$ on $\mathcal{C}$ satisfies $F(\triangle_{\bowtie})\subseteq \triangle_{\bowtie}$ for any 
admissible vector field $F$ if and only if $\bowtie$ is balanced.
\end{theorem}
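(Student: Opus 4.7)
The plan is to prove the two implications separately. The easier direction ($\bowtie$ balanced $\Rightarrow$ $\triangle_{\bowtie}$ invariant) is essentially an unpacking of definitions, so I would start there. Fix an admissible vector field $F = (f_1,\dots,f_n)$ and any $x \in \triangle_{\bowtie}$. To show $F(x) \in \triangle_{\bowtie}$, I need $f_c(x) = f_d(x)$ whenever $c \bowtie d$. From the definition of balanced, $c \bowtie d$ yields $c \sim_I d$ via an input isomorphism $\beta : I(c) \to I(d)$, so admissibility forces $f_c = f_d$ as functions (up to the symmetry induced by $\beta$ on the coupling arguments). Now $x_c = x_d$ since $x \in \triangle_{\bowtie}$ and $c \bowtie d$, while $\mathcal{T}(i) \bowtie \mathcal{T}(\beta(i))$ gives $x_{\mathcal{T}(i)} = x_{\mathcal{T}(\beta(i))}$ for every $i \in I(c)$. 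Feeding these equalities into $f_c$ and $f_d$ and using the $\beta$-equivariance of $f_c$ then yields $f_c(x) = f_d(x)$, as required.

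For the converse, I would argue by contrapositive: assume $\bowtie$ is \emph{not} balanced and construct an admissible $F$ together with $x(0) \in \triangle_{\bowtie}$ whose trajectory leaves $\triangle_{\bowtie}$. Non-balancedness supplies a pair $c \bowtie d$ such that no input isomorphism $\beta : I(c) \to I(d)$ satisfies $\mathcal{T}(i) \bowtie \mathcal{T}(\beta(i))$ for all $i \in I(c)$. The strategy is to cook up an $F$ whose component functions are sensitive enough to detect this mismatch. It suffices to work with linear (or affine) admissible vector fields: for each input equivalence class pick a simple template that sums coupling variables with coefficients indexed by arrow type, and extend consistently to all cells. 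Then choose $x \in \triangle_{\bowtie}$ by assigning distinct values to the $\bowtie$-classes in a sufficiently generic way. A counting argument on the multiset of $\bowtie$-classes of tails of input arrows of $c$ versus $d$ shows that no matter how one tries to pair input arrows of $c$ with those of $d$ via an arrow-type preserving bijection, the two sums differ, giving $f_c(x) \neq f_d(x)$ and hence $F(x) \notin \triangle_{\bowtie}$.

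The main obstacle is this converse: making the "generic $x$ plus linear $F$" argument airtight. Two points require care. First, the admissibility constraints (multiple arrows of the same type contribute symmetrically, self-couplings behave correctly) have to be respected, so the linear template must be written as a sum over arrow types rather than individual arrows. Second, I must verify that the $\bowtie$-class values can be chosen so that \emph{every} candidate bijection between $I(c)$ and $I(d)$ that respects arrow types produces a mismatch in some tail's $\bowtie$-class; this reduces to a genericity/linear-independence statement over the (finite) set of possible bijections, and is where the non-existence of a valid $\beta$ is actually exploited. Once these two points are in place, the invariance hypothesis is contradicted and the proof is complete.
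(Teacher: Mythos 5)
The paper itself does not prove Theorem~\ref{thm:balanced}: its ``proof'' is a pointer to Theorem~4.3 of Golubitsky, Stewart and T\"or\"ok, so there is no in-paper argument to compare against, and your sketch is essentially a reconstruction of that standard argument. Your forward direction (use $c\bowtie d$ to get an input isomorphism $\beta$, invoke the $\beta$-pullback condition that admissibility imposes on $f_c,f_d$, and feed in $x_c=x_d$ and $x_{\mathcal{T}(i)}=x_{\mathcal{T}(\beta(i))}$) is exactly the usual one and is fine. For the converse, your plan works, but the clean way to make it airtight is to first replace the quantification over bijections by a multiset statement: an arrow-type preserving $\beta$ with $\mathcal{T}(i)\bowtie\mathcal{T}(\beta(i))$ exists if and only if, for every arrow type $e_k$ and every $\bowtie$-class, the number of type-$e_k$ input arrows of $c$ with tail in that class equals the corresponding number for $d$ (equal multisets let you build $\beta$ by matching; unequal multisets forbid any $\beta$). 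Failure of balance then hands you a single arrow type $e_k$ and a single class with different counts, and the one linear admissible field whose $i$-th component is the sum of $x_{\mathcal{T}(e)}$ over the type-$e_k$ inputs of $i$ (zero internal term, composed with a fixed linear map from the common tail phase space to the head phase space, or simply taking all internal dimensions equal to one as this paper later does) already violates invariance at any $x\in\triangle_{\bowtie}$ whose class values are generic: the difference $f_c(x)-f_d(x)$ is a single nonzero linear functional of the class values, so the genericity is over the point $x$ alone --- not, as you phrase it, a ``linear-independence statement over the finite set of possible bijections,'' and no coefficients indexed by arrow type need to be tuned simultaneously. The degenerate case $c\not\sim_I d$ (no type-preserving bijection of any kind) is swallowed by the same counting argument, or dispatched with constant admissible fields. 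With that reformulation your outline closes; as written, the bijection-indexed genericity step is the only soft spot, and it is a framing issue rather than a fatal error. This route also matches how the present paper actually uses the theorem: its Proposition~\ref{prop:linear-admissible-polysynchronous} and Theorem~\ref{thm:main} are precisely the statement that linear admissible fields, i.e.\ the adjacency matrix acting per arrow type, suffice to detect balance.
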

\begin{proof}
See \cite[Theorem 4.3]{Golubitsky1}.
\end{proof}

A balanced equivalence relation $\bowtie$ on a network $\mathcal{G}$ induces a unique canonical coupled cell network $\mathcal{G}/_{\bowtie}$ on
$\triangle_{\bowtie}$, called the \textit{quotient network}. The set of cells of the quotient network $\mathcal{G}/_{\bowtie}$ is defined as 
$\mathcal{C}_{\bowtie}=\{\overline{c}:c\in\mathcal{C}\}$, where $\overline{c}$ denote the $\bowtie$-equivalence class of $c\in\mathcal{C}$, and the set of arrows is defined as  $\mathcal{E}_{\bowtie}=\dot{\bigcup}_{c\in\mathcal{S}}I(c)$, where $\mathcal{S}$ is a set of cells consisting of precisely one cell $c$ from each $\bowtie$-equivalence class. 

For example, Figure~\ref{fig:balanced_quotient} shows all balanced equivalence relations of the inhomogeneous network $\mathcal{G}_{3}$ in Table~\ref{tab:coupled_cell_networks} and the corresponding quotient networks. Any dynamics on the quotient \textit{lifts} to a synchronous dynamic on the original network $\mathcal{G}$.

\begin{figure}[h]
\begin{center}
\begin{tabular}{cccc}
\includegraphics[scale=0.2]{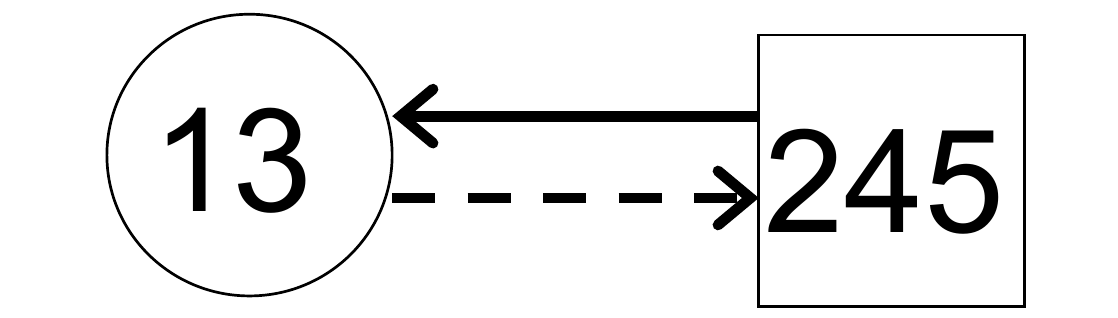} &
\includegraphics[scale=0.2]{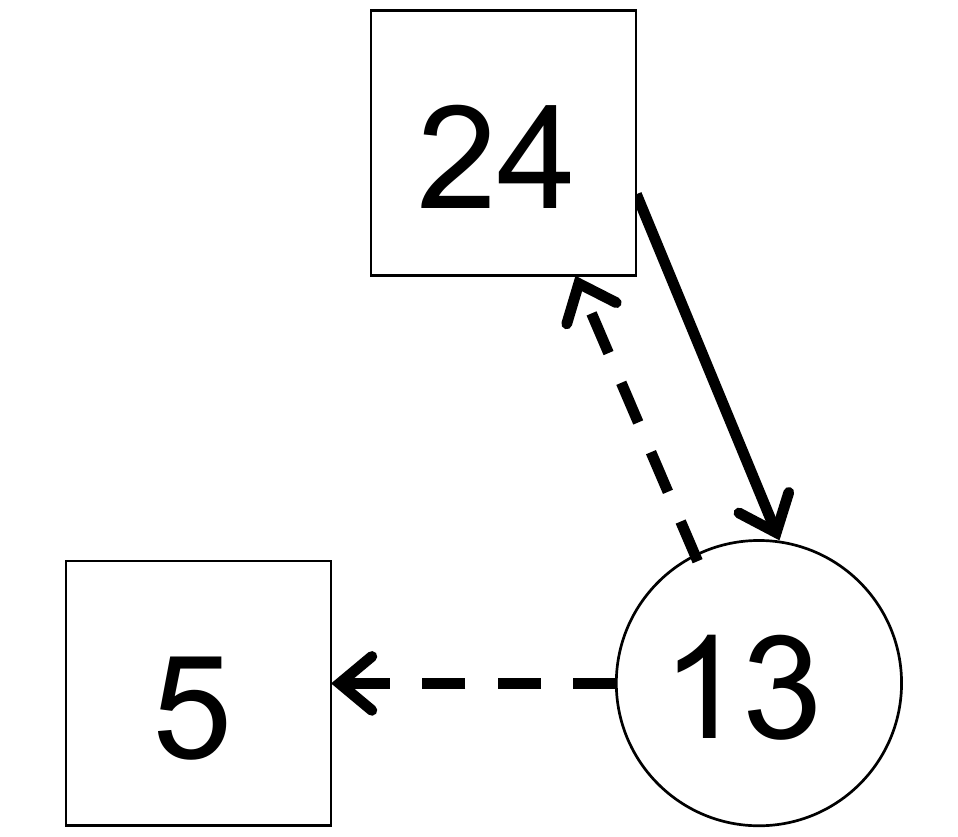} &
\includegraphics[scale=0.2]{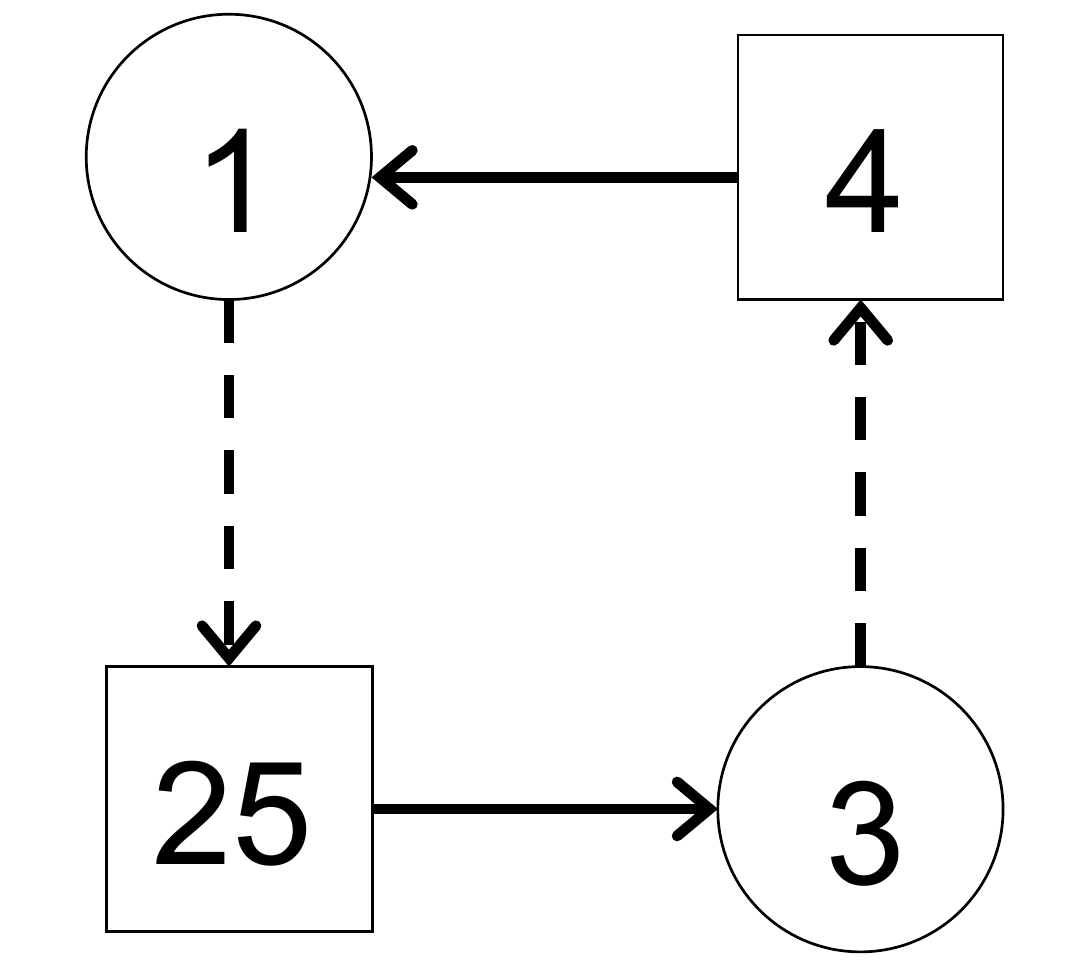} & 
\includegraphics[scale=0.2]{inhomogeneous_3.pdf}\\
$\bowtie_{1}=(13)(245)$ &
$\bowtie_{2}=(13)(24)(5)$ &
$\bowtie_{3}=(1)(25)(3)(4)$ &
$\bowtie_{4}=(1)(2)(3)(4)(5)$ \\
\end{tabular}
\end{center}
\caption{All balanced equivalence relations of the inhomogeneous network $\mathcal{G}_{3}$ (see Table~\ref{tab:coupled_cell_networks}) and the corresponding quotient networks. Note that the balanced equivalence relation $\bowtie_{4}=(1)(2)(3)(4)(5)$ is trivial and this gives the original $5$-cell network.}
\label{fig:balanced_quotient}
\end{figure}

\subsection{Lattice theory}
All possible partial synchronies (balanced equivalence relations) have a hierarchy structure represented as a \textit{complete lattice}. We recall some basic facts about lattice theory using balanced equivalence relations as an example for some concepts. See \cite{Davey} for concepts in general and for more details.

The set of all balanced equivalence relations has a partially ordered structure, using the relation of refinement. 
Let $\bowtie_{i}$ and $\bowtie_{j}$ be balanced equivalence relations on the set $\mathcal{C}$. Recall that
$\bowtie_{i}$ refines $\bowtie_{j}$, denoted by
$\bowtie_{i}\prec\bowtie_{j}$, if and only if $c\bowtie_{i}d\Rightarrow c\bowtie_{j}d$ where $c,d\in\mathcal{C}$. That is, $[c]_{i}\subseteq[c]_{j}$ where $[c]_{k}$ is the $\bowtie_{k}$-equivalence class.
The set of all balanced equivalence relations of a (locally finite) network form a complete lattice in general~\cite{Stewart,Aldis}. 

A complete lattice has a \textit{top} (maximal) element, denoted $\top$, and \textit{bottom} (minimal) element, denoted $\bot$. For example, the top element of the complete lattice of balanced equivalence relations for any $n$-cell homogeneous network is trivial and given by $\bowtie_{\top}=(12\cdots n)$ (i.e., all cells are synchronous). Aldis \cite{Aldis} and Belykh and Hasler \cite{BelykhHasler2011} give algorithms to find a nontrivial maximal balanced equivalence relation (top). For any $n$-cell coupled cell network, the bottom element is $\bowtie_{\bot}=(1)(2)\cdots(n)$ (i.e., all cells are distinct). 

The structure of a lattice can be visualised by a \textit{diagram}. Let $\bowtie_{i}$, $\bowtie_{j}$ and $\bowtie_{k}$ be distinct balanced equivalence relations. We say $\bowtie_{i}$ is covered by $\bowtie_{j}$, denoted $\bowtie_{i}<\bowtie_{j}$, if and only if $\bowtie_{i}\prec\bowtie_{j}$ and $\bowtie_{i}\prec\bowtie_{k}\prec\bowtie_{j}$ holds for no $\bowtie_{k}$. In a diagram, circles represent elements of the ordered set, and two elements $\bowtie_{i}$, $\bowtie_{j}$ are connected by a straight line if and only if one covers the other: if $\bowtie_{i}$ is covered by $\bowtie_{j}$, then the circle representing $\bowtie_{j}$ is higher than the circle representing $\bowtie_{i}$. The \textit{rank} of an equivalence relation is the number of its equivalence classes (see \cite{Kamei-part1}). Figure~\ref{fig:lattice} shows the complete lattice of the partially ordered set of all $4$ balanced equivalence relations of the coupled cell network $\mathcal{G}_{3}$, which are listed in Figure~\ref{fig:balanced_quotient}.
\begin{figure}[h]
\begin{center}
\includegraphics[scale=0.2]{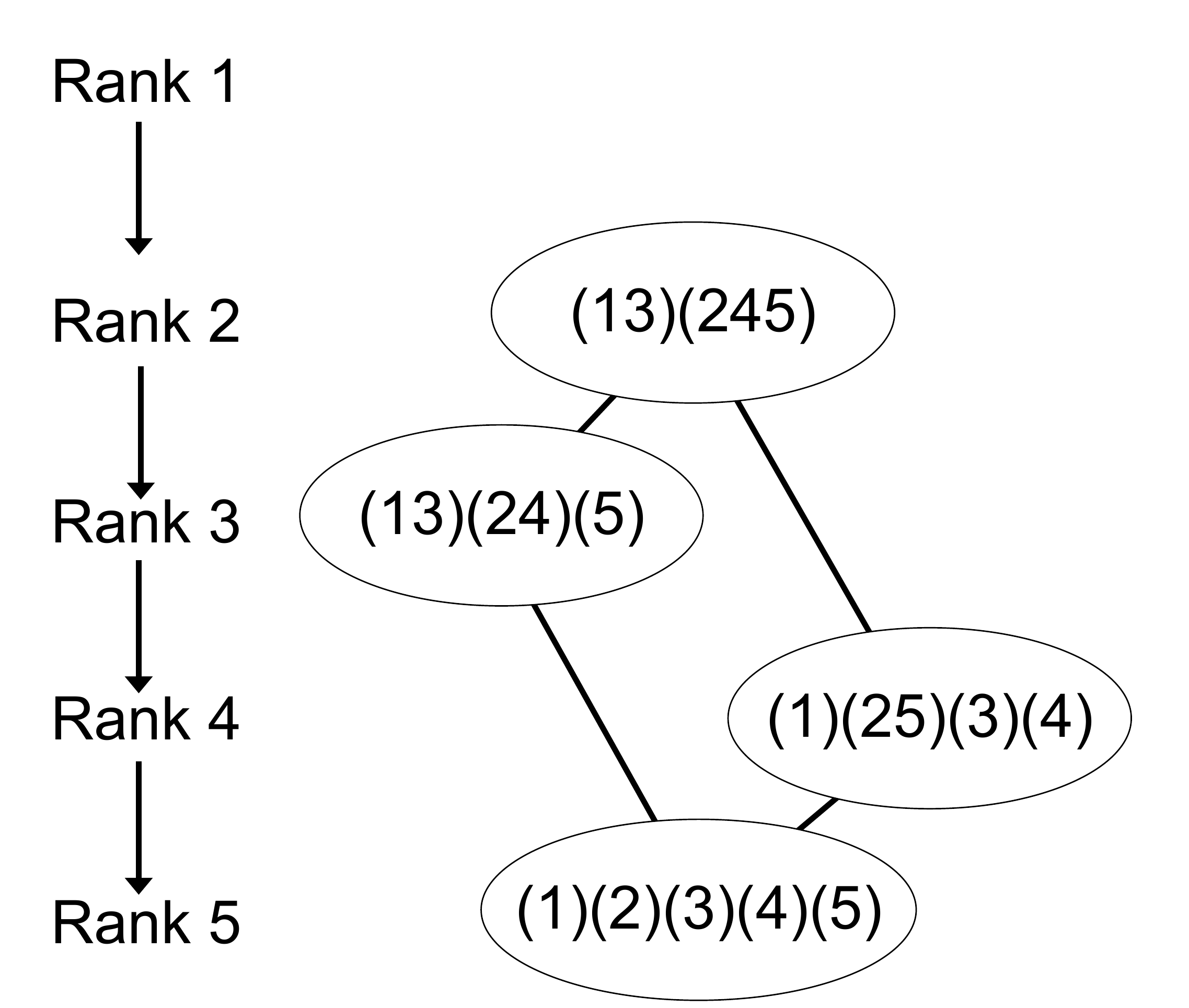}
\end{center}
\caption{The complete lattice of the partially ordered set of all possible balanced equivalence relations of the coupled cell network $\mathcal{G}_{3}$ along with their ranks (see also Figure \ref{fig:balanced_quotient}). For example, the balanced equivalence relation $\bowtie_{3}=(1)(25)(3)(4)$ has $4$ equivalence classes, hence its rank is $4$. Since this network is inhomogeneous, the top element is not of rank $1$, which requires all cells are identical.}
\label{fig:lattice}
\end{figure}

\section{Matrix computation for balanced equivalence relations}
\label{sec:matrix_computation}
We aim to determine robust patterns of synchrony of a coupled cell network solely
from the network structure, which is described by the corresponding symbolic adjacency matrix defined as follows. 
\begin{definition}
\label{def:adjacency-matrix}
Let $\mathcal{G}=(\mathcal{C},\mathcal{E},\sim_{C},\sim_{E})$ be an $n$-cell coupled cell network with $l$ cell-types and $m$ arrow-types with $[c_{1}]_{C},\ldots,[c_{l}]_{C}$, the $\sim_{C}$-equivalence classes for cells and $[e_{1}]_{E},\ldots,[e_{m}]_{E}$, the $\sim_{E}$-equivalence classes for arrows. We define the \textit{symbolic adjacency matrix} of $\mathcal{G}$ to be the $n\times n$ matrix $A=(a_{ij})$. The $(i,j)$-entry corresponds to the number of arrows of types  $[e_{1}]_{E},\ldots,[e_{m}]_{E}$ from cell $j$ to cell $i$, represented by the sum $\sum_{k=1}^{m}\beta_{k}e_{k}$, where $e_{k}$ is the type of arrow corresponding to the $[e_{k}]_{E}$-equivalence class and $\beta_{k}$ is the number of arrows of type $e_{k}$.
\end{definition}

Example coupled cell networks and their corresponding symbolic adjacency matrices are shown in Table~\ref{tab:coupled_cell_networks}.

Our main result determines all possible balanced equivalence relations combinatorially using matrix manipulations. In Lemma~\ref{lemma:block}, we show that the necessary and sufficient condition for a synchrony subspace imposes a matrix property defined as follows.

\begin{definition}
Let $B=(b_{ij})$ be a $p\times q$ symbolic matrix. We say $B$ is a homogenous block matrix if the sum $\sum_{j=1}^{q}b_{ij}$ is identical for all rows $i=1,\ldots, p$.
\end{definition}

The polydiagonal $\triangle_{\bowtie}$ is defined by a given equivalence relation $\bowtie$ which determines a unique partition of cells. We use normal form cycle notation which is obtained by writing the cell numbers $1,\ldots , n$ in increasing order in each cycle, starting with the $1$-cycle, then the $2$-cycles, and so on in increasing order of length. For example, the following polydiagonal subspace $\displaystyle \triangle_{\bowtie}=\{(x_{1},x_{2}, x_{3}, x_{4}, x_{5}, x_{6})|x_{2}=x_{4}, x_{3}=x_{5}=x_{6}\}$ corresponds to the equivalence relation $\bowtie=(1)(24)(356)$ and can be written as $\bowtie=[1^{1}2^{1}3^{1}]$. Let $A=(a_{ij})$, $i,j=1,\ldots,6$ be the adjacency matrix of a $6$-cell coupled cell network. For the above normal form cycle notation of the equivalence relation $\bowtie$, we arrange the columns and rows of the adjacency matrix of the network accordingly as follows:
\begin{displaymath}
A=\left(\begin{array}{c|cc|ccc}
a_{11} & a_{12} & a_{14} & a_{13} & a_{15} & a_{16}\\
\hline
a_{21} & a_{22} & a_{24} & a_{23} & a_{25} & a_{26}\\
a_{41} & a_{42} & a_{44} & a_{43} & a_{45} & a_{46}\\
\hline
a_{31} & a_{32} & a_{34} & a_{33} & a_{35} & a_{36}\\
a_{51} & a_{52} & a_{54} & a_{53} & a_{55} & a_{56}\\
a_{61} & a_{62} & a_{64} & a_{63} & a_{65} & a_{66}
\end{array}\right)
=
\left(\begin{array}{ccc}
A_{11} & A_{12} & A_{13} \\
A_{21} & A_{22} & A_{23} \\
A_{31} & A_{32} & A_{33} 
\end{array}\right)
\end{displaymath}
This is a block matrix, with $3 \times 3$ blocks, where $3$ is the number of equivalence classes.

More generally, let $\bowtie=[1^{\alpha_{1}}2^{\alpha_{2}}\cdots n^{\alpha_{n}}]$ and $k=\sum_{i=1}^{n}\alpha_{i}$ be the number of equivalence classes of $\bowtie$ which determines the polydiagonal $\triangle_{\bowtie}$. Interchanging the adjacency matrix rows and columns to match the permutation normal form gives a block matrix with $k \times k$ blocks.

Our main result (Theorem \ref{thm:main}) states that a polydiagonal subspace $\triangle_{\bowtie}$ is a synchrony subspace if and only if each block of the (reordered) adjacency matrix $A$ is a homogeneous block matrix. 

Alternatively, our result can be obtained by defining one (integer entry) adjacency matrix per arrow type (as in \cite{Aguiar} for regular networks with one arrow type), and finding the intersection of balanced equivalence relations for the arrow-specific adjacency matrices.

\subsection{Linear admissible vector fields}
We represent the matrix form of $\mathcal{G}$-admissible linear vector fields on an $n$-cell coupled cell network $\mathcal{G}$. 

\begin{definition}
Let $S=(s_{ij})$ and $T=(t_{ij})$ be symbolic matrices with the same size. Suppose $s_{ij}=s_{i'j'}$ for some $(i,j)$-th and $(i',j')$-th entries of $S$. If the corresponding entries in $T$  satisfy $t_{ij}=t_{i'j'}$ for all such indices $(i,j)$ and $(i',j')$, then we denote this relation by $S\sim T$.
\end{definition}

\begin{proposition}
\label{prop:higher-dimension}
Let $A=(a_{ij})$ be the $n\times n$ symbolic adjacency matrix of an $n$-cell coupled cell network $\mathcal{G}$. The $n\times n$ symbolic matrix $J=(J_{ij})$ of $\mathcal{G}$-admissible linear vector fields on $\mathcal{G}$ has the form:
\begin{equation}
\label{equ:linear-admissible}
J=D+\tilde{A},
\end{equation}
where $D=(d_{ij})$ is an $n\times n$ diagonal matrix with $d_{ii}=d_{kk}$ when $i\sim_{I} k$ for $i,k\in\mathcal{C}$, and $\tilde{A}=(\tilde{a}_{ij})$ is an $n\times n$ matrix such that $A\sim\tilde{A}$. 
\end{proposition}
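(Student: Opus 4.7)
The plan is to start from the general component-wise structure of an admissible vector field and compute its Jacobian directly, reading off the diagonal and off-diagonal pieces from the admissibility constraints.

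First, I would write the admissible vector field $F=(f_{1},\ldots,f_{n})$ component-wise, where $f_{i}$ depends on the internal variable $x_{i}$ and on the tail variables of the arrows in $I(c_{i})$. The two defining admissibility constraints are: (i) $f_{c}=f_{d}$ whenever $c\sim_{I}d$, so that input-isomorphic cells share the same governing function (through the input bijection $\beta$); and (ii) each $f_{i}$ is invariant under permutations of input arguments whose corresponding arrows have the same $\sim_{E}$-type. Linearizing at a $\mathcal{G}$-equivariant base point yields the Jacobian $J=(J_{ij})$ with $J_{ij}=\partial f_{i}/\partial x_{j}$, which I will split as $J=D+\tilde{A}$ into its diagonal and off-diagonal pieces.

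For the diagonal $D$, the entry $d_{ii}$ is the partial derivative of $f_{i}$ in its internal slot. When $i\sim_{I}k$, constraint (i) gives $f_{i}=f_{k}$ as functions, and since the internal variable occupies a canonical first slot preserved under any input isomorphism, this partial derivative is the same for both, so $d_{ii}=d_{kk}$. This is exactly the claimed property of $D$.

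For the off-diagonal piece $\tilde{A}$, fix $i\neq j$ and note that the dependence of $f_{i}$ on $x_{j}$ factors through the arrows in $I(c_{i})$ whose tail is $c_{j}$. By constraint (ii), any two such arrows of the same type $e_{k}$ contribute identical linear coefficients, call them $\lambda_{k}$ (one symbolic parameter per arrow type in the generic linear admissible vector field). If $A_{ij}=\sum_{k}\beta_{k}e_{k}$ records the arrow-type multiplicities from $j$ to $i$, then $\tilde{a}_{ij}=\sum_{k}\beta_{k}\lambda_{k}$. Consequently, whenever two symbolic entries $A_{ij}$ and $A_{i'j'}$ agree as formal sums, the multiplicities $\beta_{k}$ coincide term-by-term, and hence $\tilde{a}_{ij}=\tilde{a}_{i'j'}$, giving $A\sim\tilde{A}$ in the sense of Definition~3.

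The main subtlety I expect is in justifying that a single scalar $\lambda_{k}$ per arrow type is enough to describe the generic linear admissible vector field, and in handling the case $i=j$ cleanly when cell $i$ carries a self-coupling arrow (so that the internal-slot contribution and the self-arrow contribution together make up $d_{ii}$). Both are resolved by exploiting the input-isomorphism and arrow-type symmetry constraints simultaneously: the internal slot is stable under any input isomorphism, while non-internal slots of the same arrow type are freely permutable, so the diagonal parameter and the arrow-type parameters are canonically separated and globally well-defined.
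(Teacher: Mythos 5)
Your argument is essentially the paper's: both decompose the linear admissible map as $J=D+\tilde{A}$, an internal diagonal part $D$ that is constant on input-equivalence classes plus a coupling part $\tilde{A}$ whose entries are determined by the arrow-type content recorded in $A$, which gives $A\sim\tilde{A}$. The only differences are cosmetic --- the paper works with the block matrix $\bar{J}$ on the full phase space (cells of dimension $r_{i}$) and then replaces blocks by symbols, while you argue entry-wise via the Jacobian, and your explicit one-coefficient-per-arrow-type bookkeeping is exactly the identification the paper subsumes under ``by the admissibility.''
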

\begin{proof}
Let $r_{i}$ be the dimension of internal dynamics of the $i$-th cell. Let $\bar{J}$ be the $n'\times n'$ matrix form of the $\mathcal{G}$-admissible linear vector field of the network $\mathcal{G}$ with total phase space $\displaystyle P'=\mathbb{R}^{n'}$ where $\displaystyle n'=\sum_{i=1}^{n}r_{i}$.

$\bar{J}$ can be described as a block matrix $\bar{J}=(\bar{J}_{ij})$, $i,j=1,\ldots,n$, and each block of $\bar{J}_{ij}$ is a $k_{i} \times k_{j}$ matrix with real entries. We can decompose $\bar{J}$ as
\begin{displaymath}
\bar{J}=\bar{D}+(-\bar{D}+\bar{J}).
\end{displaymath}

Each block of the diagonal matrix $\bar{D}_{ii}$ satisfies $D_{ii}=D_{kk}$ when $i\sim_{I}k$ for $i,k\in\mathcal{C}$. By representing each block $\bar{D}_{ii}$ with a symbol $d_{ii}$, we obtain an $n\times n$ symbolic diagonal matrix $D$. Similarly, by representing each block $\bar{J}_{ij}$ of $-\bar{D}+\bar{J}$ with a symbol $\tilde{a}_{ij}$, we obtain an $n\times n$ matrix $\tilde{A}$, which satisfies $A\sim\tilde{A}$ by the admissibility. Therefore, the $n\times n$ symbolic matrix $J=(J_{ij})$ of $\mathcal{G}$-admissible linear vector fields on an $n$-cell coupled cell network $\mathcal{G}$ has the form $J=D+\tilde{A}$.
\end{proof}

An equivalence relation $\bowtie$ is balanced if and only if the $\mathcal{G}$-admissible linear vector field $J$ satisfies $J(\triangle_{\bowtie})\subseteq\triangle_{\bowtie}$ \citep[Theorem 4.3]{Golubitsky1}. This fact leads to a generalization of Proposition $4.1.$ in \cite{Kamei-part1}, which consists of only one type of cell and one type of coupling. 

\begin{proposition}
\label{prop:linear-admissible-polysynchronous} Let $\mathcal{G}$ be a coupled cell network associated with the 
admissible vector field $F$. Let $A$ be the
adjacency matrix of $\mathcal{G}$. $\bowtie$ is balanced if and only if
$A(\triangle_{\bowtie})\subseteq \triangle_{\bowtie}$ where $\triangle_{\bowtie}$ is a synchrony subspace associated with $\bowtie$.
\end{proposition}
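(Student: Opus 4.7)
The plan is to combine Theorem~\ref{thm:balanced} with the description of admissible linear Jacobians in Proposition~\ref{prop:higher-dimension}. Balancedness of $\bowtie$ is equivalent to $J(\triangle_\bowtie)\subseteq\triangle_\bowtie$ for every admissible linear vector field $J$, and by Proposition~\ref{prop:higher-dimension} every such $J$ has the form $D+\tilde A$, with $D$ a diagonal matrix subject to $d_{ii}=d_{kk}$ when $i\sim_I k$, and $\tilde A\sim A$.

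For the forward direction, I would note that the symbolic matrix $A$ itself arises as one admissible $J$: take $D=0$ (which vacuously satisfies the $\sim_I$ diagonal constraint) and $\tilde A=A$ (using $A\sim A$). Applying the hypothesis that $\bowtie$ is balanced to this particular admissible $J$ yields $A(\triangle_\bowtie)\subseteq\triangle_\bowtie$.

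For the reverse direction, the symbolic inclusion $A(\triangle_\bowtie)\subseteq\triangle_\bowtie$ should be interpreted as $\tilde A(\triangle_\bowtie)\subseteq\triangle_\bowtie$ for every realization $\tilde A\sim A$. Given any admissible $J=D+\tilde A$, linearity of $\triangle_\bowtie$ gives $J(\triangle_\bowtie)\subseteq D(\triangle_\bowtie)+\tilde A(\triangle_\bowtie)$, and the $\tilde A$ term is handled directly by the hypothesis. Thus it suffices to show $D(\triangle_\bowtie)\subseteq\triangle_\bowtie$, which is equivalent to $d_{cc}=d_{dd}$ whenever $c\bowtie d$; in view of the constraint $d_{ii}=d_{kk}$ for $i\sim_I k$, this reduces to showing that $\bowtie$ refines $\sim_I$. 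I would extract this refinement by evaluating the hypothesis on the uniform vector $(1,\dots,1)\in\triangle_\bowtie$: the inclusion forces $\sum_j a_{cj}=\sum_j a_{dj}$ as a symbolic identity in the arrow-type variables $e_k$ whenever $c\bowtie d$, so coefficient-wise comparison shows that cells $c$ and $d$ receive the same multiset of input arrow types and hence $c\sim_I d$. Combining the two pieces yields $J(\triangle_\bowtie)\subseteq\triangle_\bowtie$ for every admissible $J$, and Theorem~\ref{thm:balanced} concludes that $\bowtie$ is balanced.

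The main obstacle is the reverse direction, specifically the separation of an admissible Jacobian into its diagonal and off-diagonal parts: the hypothesis only controls the $A$ piece, so one must argue that $D$ automatically respects $\triangle_\bowtie$. This forces the preliminary step of deducing $\bowtie\prec\sim_I$ from the inclusion, which in turn depends on choosing a good test vector (the uniform one) to convert the hypothesis into an equality of symbolic row sums. The rest of the argument is bookkeeping once $\triangle_\bowtie$ is exploited as a linear subspace.
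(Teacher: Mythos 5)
Your proof is correct and takes essentially the same route as the paper: decompose every admissible linear map as $J=D+\tilde{A}$ via Proposition~\ref{prop:higher-dimension}, use the characterization of balancedness by invariance under admissible (linear) vector fields from Theorem~\ref{thm:balanced}, dispose of $D$ through $\bowtie\prec\sim_{I}$, and identify the $\tilde{A}$-condition with the symbolic condition on $A$ via $A\sim\tilde{A}$. The only point where you go beyond the paper is your uniform-vector argument deducing $\bowtie\prec\sim_{I}$ from $A(\triangle_{\bowtie})\subseteq\triangle_{\bowtie}$ in the reverse direction; the paper's chain of equivalences simply invokes ``$\bowtie$ refines $\sim_{I}$'' there without justification, so your extra step legitimately closes that small gap rather than changing the method.
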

\begin{proof}
Using Equation~\ref{equ:linear-admissible}, the result follows immediately:
\begin{eqnarray*}
&                 & J(\triangle_{\bowtie})\subseteq\triangle_{\bowtie}\\
& \Leftrightarrow & \{D+\tilde{A}\}(\triangle_{\bowtie})\subseteq\triangle_{\bowtie}\\
& \Leftrightarrow & \{D(\triangle_{\bowtie})+\tilde{A}(\triangle_{\bowtie})\}\subseteq\triangle_{\bowtie}\\
& \Leftrightarrow & \tilde{A}(\triangle_{\bowtie})\subseteq\triangle_{\bowtie}\quad\textrm{since }\bowtie\textrm{ refines }\sim_{I}\textrm{, and therefore } D(\triangle_{\bowtie})\subseteq\triangle_{\bowtie}\\
& \Leftrightarrow & A(\triangle_{\bowtie})\subseteq\triangle_{\bowtie}\quad\textrm{since}\quad A\sim\tilde{A}.
\end{eqnarray*}
\end{proof}

Now, we aim to determine all possible invariant polydiagonals under a given adjacency matrix using a projection map. Since the determination of synchrony subspaces does not depend on the size of internal dynamics of the cells, without loss of generality, we assume the total phase space $P=\mathbb{R}^{n}$ for an $n$-cell coupled cell network for the remaining arguments.

\subsection{Projection onto a polydiagonal}
We construct a projection map on a given polydiagonal as follows.

Let $\triangle_{\bowtie}\subseteq \mathbb{R}^{n}$ be a polydiagonal subspace of $\mathbb{R}^{n}$, and $\triangle_{\bowtie}'$ denote its complement.
We define the projection map $P_{\bowtie}$ of $\mathbb{R}^{n}$ on $\triangle_{\bowtie}$ along $\triangle_{\bowtie}'$ by $\textrm{Im}(P_{\bowtie})=\triangle_{\bowtie}$ and $\textrm{Ker}(P_{\bowtie})=\triangle_{\bowtie}'$.

Let $\triangle_{\bowtie}$ be the polydiagonal determined by a given equivalence relation $\bowtie$. We use normal form cycle notation for an equivalence relation. For example, the equivalence relation $\bowtie$ corresponding to a polydiagonal subspace $\triangle_{\bowtie}=\{(x_{1},x_{2}, x_{3})|x_{2}=x_{3}\}$ is written as a product of disjoint cycles $\bowtie=(1)(23)$ in normal form, and also written as $\bowtie=[1^{1}2^{1}]$.

Now we define a map $\pi$ which maps each element to the first element of the cycle that they belong to when written in normal form. For example, the elements in the above product of disjoint cycles are mapped to $1\rightarrow 1$, $2\rightarrow 2$, and $3\rightarrow 2$ by a map $\pi$. We define the corresponding projection matrix $P_{\bowtie}=(p_{ij})$ on $\triangle_{\bowtie}$, which is written in normal form of a
partition using the map $\pi$ as follows:
\begin{displaymath}
p_{i,\pi(i)}=1
\end{displaymath}
with all other entries being $0$.

With the elements of $P_{\bowtie}$ defined in this way, the projection matrix has a block diagonal form:
\begin{displaymath}
P_{\bowtie}=\left( \begin{array}{cccc} P_{1} & 0 & \cdots & 0 \\
0 & P_{2} & \cdots & 0 \\
\vdots & \vdots & \ddots & \vdots \\
 0 & 0 & \cdots & P_{k}
\end{array} \right)
\end{displaymath}
where $k$ is the number of disjoint cycles and $P_{i}$,
$i=1,\cdots , k$ is a $t_i \times t_i$ square projection matrix
on $\triangle=\{(x_{1}, \ldots , x_{t_i})|x_{1}=\cdots
=x_{t_i}\}$ with $\textrm{rank}(P_{i})=1$ $\forall i$ and off-diagonal blocks are zero matrices.

\begin{lemma}
\label{lemma:A-invariant-subspaces} Let $P_{\bowtie}$ and $A$ be
linear mappings of $\mathbb{R}^{n}$ and let
$\mathbb{R}^{n}=\triangle_{\bowtie}\oplus\triangle_{\bowtie}'$.
$\triangle_{\bowtie}$ is $A$-invariant if and only if
$P_{\bowtie}AP_{\bowtie}=AP_{\bowtie}$, where $P_{\bowtie}$ is
the projection on $\triangle_{\bowtie}$ along
$\triangle_{\bowtie}'$ and $A$ is the adjacency matrix of a given
coupled cell network.
\end{lemma}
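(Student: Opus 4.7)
The plan is to prove both directions by a short direct argument relying on two elementary properties of the projection $P_{\bowtie}$: its image equals $\triangle_{\bowtie}$, so $P_{\bowtie}$ restricts to the identity on $\triangle_{\bowtie}$; and for every $x\in\mathbb{R}^{n}$ the vector $P_{\bowtie}x$ lies in $\triangle_{\bowtie}$. Both properties are immediate from the definition $\mathrm{Im}(P_{\bowtie})=\triangle_{\bowtie}$, $\mathrm{Ker}(P_{\bowtie})=\triangle_{\bowtie}'$, together with idempotency $P_{\bowtie}^{2}=P_{\bowtie}$. No information about the specific block-diagonal shape of $P_{\bowtie}$ or the cycle notation for $\bowtie$ is required for this step; the result is really a general linear-algebra fact about invariant subspaces and complementary projections.

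For the forward direction, assume $A(\triangle_{\bowtie})\subseteq\triangle_{\bowtie}$. For arbitrary $x\in\mathbb{R}^{n}$, the vector $P_{\bowtie}x$ lies in $\triangle_{\bowtie}$, hence $AP_{\bowtie}x\in\triangle_{\bowtie}$ by invariance. Since $P_{\bowtie}$ acts as the identity on $\triangle_{\bowtie}$, I obtain $P_{\bowtie}(AP_{\bowtie}x)=AP_{\bowtie}x$; as $x$ was arbitrary this gives the operator equation $P_{\bowtie}AP_{\bowtie}=AP_{\bowtie}$.

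For the reverse direction, assume $P_{\bowtie}AP_{\bowtie}=AP_{\bowtie}$. Pick any $y\in\triangle_{\bowtie}$, so $P_{\bowtie}y=y$. Then
$$Ay=AP_{\bowtie}y=P_{\bowtie}AP_{\bowtie}y=P_{\bowtie}(Ay)\in\mathrm{Im}(P_{\bowtie})=\triangle_{\bowtie},$$
so $A(\triangle_{\bowtie})\subseteq\triangle_{\bowtie}$, as required.

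There is no real obstacle to overcome: the lemma is the standard characterisation of an invariant subspace of a linear endomorphism via a complementary projection, and the only care needed is to keep straight the two roles played by $P_{\bowtie}$, namely sending all of $\mathbb{R}^{n}$ into $\triangle_{\bowtie}$ and restricting to the identity on $\triangle_{\bowtie}$. The specific block structure of $P_{\bowtie}$ and the combinatorics of $\bowtie$ only become relevant later, when the identity $P_{\bowtie}AP_{\bowtie}=AP_{\bowtie}$ is unpacked to derive the homogeneous block matrix condition on $A$.
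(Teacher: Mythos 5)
Your proof is correct. The paper itself does not write out an argument for this lemma---it simply cites it as a well-known linear-algebra fact (referring to Lancaster and to Kamei's earlier work)---and your two-direction argument, using only idempotency, $\mathrm{Im}(P_{\bowtie})=\triangle_{\bowtie}$, and the fact that $P_{\bowtie}$ restricts to the identity on its image, is exactly the standard proof underlying that citation, so there is no substantive difference in approach.
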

\begin{proof}
This result is well known, see for example~\cite{Lancaster} and~\cite{Kamei-part1}.
\end{proof}

For the rest of arguments, we arrange the columns and rows of the adjacency matrix of the network according to the normal form of a given equivalence relation $\bowtie$.

\begin{proposition}
\label{prop:papap} $\triangle_{\bowtie}$ is a synchrony subspace of a coupled cell network $\mathcal{G}$ if and only if
$P_{\bowtie}AP_{\bowtie}=AP_{\bowtie}$, where $P_{\bowtie}$ is the projection on $\triangle_{\bowtie}$ along
$\triangle_{\bowtie}'$ and $A$ is the adjacency matrix of $\mathcal{G}$.
\end{proposition}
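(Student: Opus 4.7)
The proposition follows essentially by chaining three results already established in the excerpt, so my plan is to trace through the equivalences rather than introduce any new machinery.

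First I would invoke Theorem \ref{thm:balanced}, which tells us that $\triangle_{\bowtie}$ is a synchrony subspace (that is, $F(\triangle_{\bowtie}) \subseteq \triangle_{\bowtie}$ for all admissible $F \in \mathcal{F}_{\mathcal{G}}^{P}$) if and only if $\bowtie$ is balanced. This reduces the statement about synchrony subspaces to a purely combinatorial statement about the equivalence relation.

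Next I would apply Proposition \ref{prop:linear-admissible-polysynchronous}, which converts the balanced condition into the linear-algebraic statement $A(\triangle_{\bowtie}) \subseteq \triangle_{\bowtie}$, i.e., $\triangle_{\bowtie}$ is $A$-invariant. The content of that proposition is precisely that testing invariance against the adjacency matrix is equivalent to testing invariance against every admissible vector field, thanks to the decomposition $J = D + \tilde{A}$ from Proposition \ref{prop:higher-dimension} together with the observation that $\bowtie$ refines $\sim_I$ and hence $D(\triangle_{\bowtie}) \subseteq \triangle_{\bowtie}$.

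Finally, Lemma \ref{lemma:A-invariant-subspaces} gives the standard characterization of invariance of the image of a projection: $\triangle_{\bowtie} = \mathrm{Im}(P_{\bowtie})$ is $A$-invariant if and only if $P_{\bowtie}AP_{\bowtie} = AP_{\bowtie}$ (the right-hand side being the assertion that $AP_{\bowtie}$ already lies in the image of $P_{\bowtie}$, so projecting again changes nothing). Composing these three equivalences yields the claim. There is no genuine obstacle here, since the heavy lifting has been done in the cited results; the only thing to be careful about is that Lemma \ref{lemma:A-invariant-subspaces} is stated with $\mathbb{R}^n = \triangle_{\bowtie} \oplus \triangle_{\bowtie}'$, which is exactly the setting we reduced to at the end of the previous subsection by taking the total phase space $P = \mathbb{R}^n$.
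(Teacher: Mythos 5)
Your argument is correct and follows essentially the same route as the paper's own proof: chaining the equivalences synchrony subspace $\Leftrightarrow$ $\bowtie$ balanced (Theorem~\ref{thm:balanced}), balanced $\Leftrightarrow$ $A(\triangle_{\bowtie})\subseteq\triangle_{\bowtie}$ (Proposition~\ref{prop:linear-admissible-polysynchronous}), and $A$-invariance $\Leftrightarrow$ $P_{\bowtie}AP_{\bowtie}=AP_{\bowtie}$ (Lemma~\ref{lemma:A-invariant-subspaces}). Your extra remark about the reduction to $P=\mathbb{R}^{n}$ is a correct observation about the setting and does not change the argument.
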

\begin{proof}
If $\triangle_{\bowtie}$ is a synchrony subspace, then the corresponding equivalence relation $\bowtie$ is balanced. Hence, by Proposition~\ref{prop:linear-admissible-polysynchronous}, $\triangle_{\bowtie}$ is $A$-invariant and from Lemma~\ref{lemma:A-invariant-subspaces}, the corresponding projection map $P_{\bowtie}$ to $\triangle_{\bowtie}$ satisfies $P_{\bowtie}AP_{\bowtie}=AP_{\bowtie}$. Conversely, if
$P_{\bowtie}AP_{\bowtie}=AP_{\bowtie}$, then the corresponding subspace $\triangle_{\bowtie}$ is $A$-invariant by Lemma~\ref{lemma:A-invariant-subspaces} and Proposition~\ref{prop:linear-admissible-polysynchronous}, thus the corresponding polydiagonal is balanced.
\end{proof}

\subsection{Block structure of an adjacency matrix}
We show that the necessary and sufficient condition for a synchrony subspace in Proposition~\ref{prop:papap} imposes a block structure on the adjacency matrix. 

\begin{lemma}
\label{lemma:block} Let $A$ be the $n\times n$ adjacency matrix
of a given coupled cell network $\mathcal{G}$. Suppose a
synchrony subspace $\triangle_{\bowtie}$ is defined by a
partition $[1^{\alpha_{1}}2^{\alpha_{2}}\cdots n^{\alpha_{n}}]$
and the corresponding projection matrix $P_{\bowtie}$ is a block
diagonal matrix whose diagonal blocks $P_{i}$, $i=1, \ldots , k$,
where $k=\sum_{i=1}^{n}\alpha_{i}$ are projection matrices on
diagonal subspaces $\triangle$ in the corresponding dimensions.
Then $P_{\bowtie}AP_{\bowtie}=AP_{\bowtie}$ if and only if
corresponding blocks of $A$ to $P_{\bowtie}$ satisfy the
following condition:
\begin{itemize}
\item Let $A_{st}$, where $s, t=1,\ldots, k$ be blocks of $A$. For all blocks $A_{st}$, the sum of each row is identical.
\end{itemize}
\end{lemma}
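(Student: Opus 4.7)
The plan is to reduce the matrix equation $P_{\bowtie}AP_{\bowtie}=AP_{\bowtie}$ to a block-by-block identity and then read off the row-sum condition from the explicit shape of each projection block $P_i$.

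First I would record what the blocks $P_i$ actually look like. Because the partition is written in normal form and $\pi$ sends every element of a cycle to the cycle's smallest element, inside each $t_i\times t_i$ diagonal block $P_i$ the only nonzero entries lie in the first column, and that column is all $1$s. Thus for any matrix $M$ of compatible size, right-multiplication by $P_i$ places the row sums of $M$ into the first column of $MP_i$ and zeros out the remaining columns, while left-multiplication by $P_i$ replaces every row of $M$ by a copy of its first row.

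Next I would exploit that $P_{\bowtie}=\mathrm{diag}(P_1,\dots,P_k)$ is block diagonal, so after reordering rows and columns of $A$ to match the normal form partition and writing $A=(A_{st})_{s,t=1}^{k}$, the products decompose blockwise as
\begin{displaymath}
(AP_{\bowtie})_{st} = A_{st}P_t, \qquad (P_{\bowtie}AP_{\bowtie})_{st} = P_s A_{st} P_t.
\end{displaymath}
By the first step, $A_{st}P_t$ is the matrix whose $i$-th row equals $(r_i^{(st)},0,\dots,0)$, where $r_i^{(st)}=\sum_{j}(A_{st})_{ij}$ is the $i$-th row sum of $A_{st}$. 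Applying $P_s$ on the left then produces the matrix in which every row equals $(r_1^{(st)},0,\dots,0)$.

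Finally I would compare these two expressions entry by entry. The identity $(AP_{\bowtie})_{st}=(P_{\bowtie}AP_{\bowtie})_{st}$ holds for all $s,t$ if and only if $r_i^{(st)}=r_1^{(st)}$ for every $i$ and every pair $(s,t)$, i.e.\ every block $A_{st}$ has constant row sum. Since no approximation or limiting argument is needed, the only delicate point is bookkeeping: keeping track of the reordering induced by the normal form and confirming that right-multiplication by $P_t$ really collects row sums into the leading column of the $t$-th block column. Once that is verified, both directions of the equivalence follow immediately from the block identity above.
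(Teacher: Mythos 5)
Your proposal is correct and follows essentially the same route as the paper: decompose the products blockwise using the block-diagonal structure of $P_{\bowtie}$ to reduce to $P_sA_{st}P_t=A_{st}P_t$, then use the explicit form of each projection block (all ones in the first column) to read off that equality holds precisely when every block $A_{st}$ has constant row sums. No gaps; the bookkeeping you flag about the normal-form reordering is exactly what the paper also assumes.
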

\begin{proof}
Since $P_{\bowtie}=\left( \begin{array}{ccc} P_{1} &\cdots  & 0 \\
\vdots & \ddots & \vdots \\
0 & \cdots & P_{k}
\end{array} \right)$, we obtain:
\begin{eqnarray*}
AP_{\bowtie} & = & \left( \begin{array}{ccc} A_{11} & \cdots & A_{1k} \\
\vdots & \ddots & \vdots\\
A_{k1} & \cdots & A_{kk}
\end{array} \right)\left( \begin{array}{ccc} P_{1} &\cdots  & 0 \\
\vdots & \ddots &\vdots \\
0 &\cdots  & P_{k}
\end{array} \right)\\
& = & \left( \begin{array}{cccc} A_{11}P_{1} & A_{12}P_{2} & \cdots & A_{1k}P_{k} \\
A_{21}P_{1} & A_{22}P_{2} & \cdots & A_{2k}P_{k}\\
\vdots & \vdots &  \ddots & \vdots \\
A_{k1}P_{1} & A_{k2}P_{2} & \cdots & A_{kk}P_{k}
\end{array} \right) \\
P_{\bowtie}AP_{\bowtie} & = & \left( \begin{array}{ccc} P_{1} & \cdots & 0 \\
\vdots & \ddots &\vdots \\
0 & \cdots & P_{k}
\end{array} \right)\left( \begin{array}{cccc} A_{11}P_{1} & A_{12}P_{2} & \cdots & A_{1k}P_{k} \\
A_{21}P_{1} & A_{22}P_{2} & \cdots & A_{2k}P_{k}\\
\vdots & \vdots &  \ddots & \vdots \\
A_{k1}P_{1} & A_{k2}P_{2} & \cdots & A_{kk}P_{k}
\end{array} \right) \\
& = & \left( \begin{array}{cccc} P_{1}A_{11}P_{1} & P_{1}A_{12}P_{2} & \cdots & P_{1}A_{1k}P_{k} \\
P_{2}A_{21}P_{1} & P_{2}A_{22}P_{2} & \cdots & P_{2}A_{2k}P_{k}\\
\vdots & \vdots &  \ddots & \vdots \\
P_{k}A_{k1}P_{1} & P_{k}A_{k2}P_{2} & \cdots & P_{k}A_{kk}P_{k}
\end{array} \right) 
\end{eqnarray*}
Hence, $P_{\bowtie}AP_{\bowtie}=AP_{\bowtie}\Longleftrightarrow P_{s}A_{st}P_{t}=A_{st}P_{t}$, for all $s,t=1, \ldots , k$.

Let $A_{st}=(a^{st})_{ij}$ be an arbitrary $l\times m$ block matrix. Then $P_{t}$ is a $m\times m$ square matrix and $P_{s}$
is a $l\times l$ square matrix. Since $P_{t}$ and $P_{s}$ are projection matrices onto $m$-dimensional and $l$-dimensional diagonals, respectively,

\begin{eqnarray*}
A_{st}P_{t} & = & \left( \begin{array}{ccc} a^{st}_{11} & \cdots & a^{st}_{1m} \\
\vdots & \ddots & \vdots\\
a^{st}_{l1} & \cdots & a^{st}_{lm}
\end{array} \right)\left( \begin{array}{cccc} 1 & 0 & \cdots & 0 \\
1 & 0 & \cdots & 0 \\
\vdots & \vdots & \ddots & \vdots \\
1 & 0 & \cdots & 0
\end{array} \right)\\
& = & \left( \begin{array}{cccc} a^{st}_{11}+\cdots +a^{st}_{1m} & 0 & \cdots & 0 \\
a^{st}_{21}+\cdots +a^{st}_{2m} & 0 & \cdots & 0 \\
\vdots & \vdots &  \ddots & \vdots \\
a^{st}_{l1}+\cdots +a^{st}_{lm} & 0 & \cdots & 0
\end{array} \right) \\
P_{s}A_{st}P_{t} & = & \left( \begin{array}{cccc} 1 & 0 & \cdots & 0 \\
1 & 0 & \cdots & 0 \\
\vdots & \vdots & \ddots & \vdots \\
1 & 0 & \cdots & 0
\end{array} \right)\left( \begin{array}{cccc} a^{st}_{11}+\cdots +a^{st}_{1m} & 0 & \cdots & 0 \\
a^{st}_{21}+\cdots +a^{st}_{2m} & 0 & \cdots & 0 \\
\vdots & \vdots &  \ddots & \vdots \\
a^{st}_{l1}+\cdots +a^{st}_{lm} & 0 & \cdots & 0
\end{array} \right) \\
& = & \left( \begin{array}{cccc} a^{st}_{11}+\cdots +a^{st}_{1m} & 0 & \cdots & 0 \\
a^{st}_{11}+\cdots +a^{st}_{1m} & 0 & \cdots & 0 \\
\vdots & \vdots &  \ddots & \vdots \\
a^{st}_{11}+\cdots +a^{st}_{1m} & 0 & \cdots & 0
\end{array} \right) 
\end{eqnarray*}

Thus $P_{s}A_{st}P_{t}=A_{st}P_{t}$ if and only if $\sum_{j=1}^{m}a^{st}_{ij}$ is identical for all $i=1,\ldots , l$.

Therefore, $P_{s}A_{st}P_{t}=A_{st}P_{t}$ for all $s,t=1,\ldots,k$ if and only if, for all blocks $A_{st}$, the sum of each row is identical.  
\end{proof}

We call a block of this form a \textit{homogeneous block matrix}.
It now follows immediately that:
\begin{theorem}
\label{thm:main}
A polydiagonal subspace $\triangle_{\bowtie}$ is a synchrony subspace if and only if each block of the adjacency matrix $A$, which corresponds to a block of
$P_{\bowtie}$, is a homogeneous block matrix.
\end{theorem}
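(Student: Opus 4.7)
The plan is to observe that this theorem is essentially a repackaging of the two preceding results: Proposition~\ref{prop:papap} and Lemma~\ref{lemma:block}. In fact, the author already signals this with the phrase ``It now follows immediately that'', so I expect the proof to be a short chain of equivalences with no new content beyond invoking the definition of a homogeneous block matrix.

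Concretely, I would proceed in three short steps. First, apply Proposition~\ref{prop:papap} to reduce the statement ``$\triangle_{\bowtie}$ is a synchrony subspace'' to the matrix identity $P_{\bowtie}AP_{\bowtie}=AP_{\bowtie}$. Second, invoke Lemma~\ref{lemma:block}, which translates this identity into the blockwise condition: for each block $A_{st}$ of the adjacency matrix $A$ (with blocks induced by the cycle partition corresponding to $\bowtie$), the row sums of $A_{st}$ are all equal. Third, recognise that this row-sum condition is precisely the defining property of a homogeneous block matrix introduced just before the theorem, and conclude.

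Since this is a pure combination of already-proven results, there is essentially no obstacle to overcome; the main thing to be careful about is that the row/column reordering of $A$ to match the normal-form cycle ordering of $\bowtie$ (used to make the block structure of $P_{\bowtie}$ diagonal) is the same reordering implicit in the statement ``each block of the adjacency matrix $A$, which corresponds to a block of $P_{\bowtie}$''. Once that alignment is acknowledged, the chain of equivalences
\[
\triangle_{\bowtie}\text{ is a synchrony subspace}\iff P_{\bowtie}AP_{\bowtie}=AP_{\bowtie}\iff\text{each }A_{st}\text{ is a homogeneous block matrix}
\]
closes the argument in one line.
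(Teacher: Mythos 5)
Your proposal matches the paper's own proof exactly: the paper proves Theorem~\ref{thm:main} by the same two-step chain, citing Lemma~\ref{lemma:block} for the equivalence with $P_{\bowtie}AP_{\bowtie}=AP_{\bowtie}$ and Proposition~\ref{prop:papap} for the equivalence with being a synchrony subspace. Your remark about aligning the row/column reordering of $A$ with the normal-form ordering used for $P_{\bowtie}$ is a correct and sensible point of care, consistent with the paper's convention stated before Proposition~\ref{prop:papap}.
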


\begin{proof}
Each block of $A$ is a homogeneous block matrix\\
$\Longleftrightarrow\quad P_{\bowtie}AP_{\bowtie}=AP_{\bowtie}$
(by Lemma~\ref{lemma:block})\\
$\Longleftrightarrow\quad \triangle_{\bowtie}\quad\textrm{is a
synchrony subspace}$ (by Proposition~\ref{prop:papap}).
\end{proof}

\begin{example}
The projection mapping on
$\triangle_{\bowtie}=\{(x_{1},x_{2}, x_{3})|x_{2}=x_{3}\}$ has the form:
\begin{displaymath}
P_{\bowtie}=\left( \begin{array}{c|cc} 1 & 0 & 0 \\
\hline
0 & 1 & 0 \\
0 & 1 & 0
\end{array} \right)
\end{displaymath}

Let $A$ be the adjacency matrix of a $3$-cell coupled cell network of the form:
\begin{displaymath}
A=\left( \begin{array}{c|cc} a_{11} & a_{12} & a_{13} \\
\hline
a_{21} & a_{22} & a_{23} \\
a_{31} & a_{32} & a_{33}
\end{array} \right)
\end{displaymath}
Then,
\begin{eqnarray*}
P_{\bowtie}AP_{\bowtie} & = & \left( \begin{array}{ccc} 1 & 0 & 0 \\
0 & 1 & 0 \\
0 & 1 & 0
\end{array} \right)\left( \begin{array}{ccc} a_{11} & a_{12} & a_{13} \\
a_{21} & a_{22} & a_{23} \\
a_{31} & a_{32} & a_{33}
\end{array} \right)\left( \begin{array}{ccc} 1 & 0 & 0 \\
0 & 1 & 0 \\
0 & 1 & 0
\end{array} \right)\\
& = & \left( \begin{array}{ccc} 1 & 0 & 0 \\
0 & 1 & 0 \\
0 & 1 & 0
\end{array} \right)\left( \begin{array}{ccc} a_{11} & a_{12}+a_{13} & 0 \\
a_{21} & a_{22}+a_{23} & 0 \\
a_{31} & a_{32}+a_{33} & 0
\end{array} \right) \\
& = & \left( \begin{array}{ccc} a_{11} & a_{12}+a_{13} & 0 \\
a_{21} & a_{22}+a_{23} & 0 \\
a_{21} & a_{22}+a_{23} & 0
\end{array} \right)
\end{eqnarray*}
Therefore,
\begin{displaymath}
P_{\bowtie}AP_{\bowtie}=AP_{\bowtie}\quad\Longleftrightarrow\quad
a_{21}=a_{31}\quad\textrm{and}\quad a_{22}+a_{23}=a_{32}+a_{33}.
\end{displaymath}
Hence, $\triangle_{\bowtie}$ is a synchrony subspace if and only
if $A$ has the following block structure:
\begin{displaymath}
A=\left( \begin{array}{c|cc} a_{11} & a_{12} & a_{13} \\
\hline
a_{21} & a_{22} & a_{23} \\
a_{21} & a_{32} & a_{33}
\end{array} \right)
\end{displaymath}
where $a_{22}+a_{23}=a_{32}+a_{33}$.

\end{example}

Next we derive the adjacency matrix of the quotient network which is
defined as the adjacency matrix $A$ restricted on a synchrony subspace $\triangle_{\bowtie}$.

\begin{corollary}
Let $\triangle_{\bowtie}$ be a synchrony subspace
defined by a partition $[1^{\alpha_{1}}2^{\alpha_{2}}\cdots n^{\alpha_{n}}]$ with $k=\alpha_{1}+\cdots+\alpha_{n}$
equivalence classes, and $P_{\bowtie}$ be the corresponding block
projection matrix. Let $A_{st}$ for $s,t=1,\ldots,k$
corresponding to the blocks of $P_{\bowtie}$ be blocks of an
$n\times n$ adjacency matrix $A$. If blocks $A_{st}=(a^{st})_{ij}$ are
homogeneous block matrices such that:
\begin{displaymath}
A=\left( \begin{array}{c|c|c||c|c|c||c||c|c|c} A_{11} & \cdots &
A_{1\alpha_{1}} & A_{1(\alpha_{1}+1)} & \cdots &
A_{1(\alpha_{1}+\alpha_{2})} & \cdots &
A_{1(\alpha_{1}+\cdots+\alpha_{n-1}+1)} & \cdots &
A_{1k}\\
\hline A_{21} & \cdots & A_{2\alpha_{1}} & A_{2(\alpha_{1}+1)} &
\cdots & A_{2(\alpha_{1}+\alpha_{2})} & \cdots &
A_{2(\alpha_{1}+\cdots +\alpha_{n-1}+1)} & \cdots &
A_{2k} \\
\hline \vdots & \ddots & \vdots & \vdots & \ddots & \vdots &
\ddots & \vdots & \ddots & \vdots \\
\hline A_{k1} & \cdots & A_{k\alpha_{1}} & A_{k(\alpha_{1}+1)} &
\cdots & A_{k(\alpha_{1}+\alpha_{2})} & \cdots &
A_{k(\alpha_{1}+\cdots+\alpha_{n-1}+1)} & \cdots & A_{kk}
\end{array} \right)
\end{displaymath}

then the quotient network corresponding to $\bowtie$ has a
$k\times k$ adjacency matrix $A|_{\triangle_{\bowtie}}$, denoted
by $A_{\bowtie}$, of the form:
\begin{displaymath}
A_{\bowtie}=\left( \begin{array}{c|c|c||c||c|c|c}
\sum_{j=1}^{1}a^{11}_{1j} & \cdots &
\sum_{j=1}^{1}a^{1\alpha_{1}}_{1j}
& \cdots & \sum_{j=1}^{n}a^{1(\alpha_{1}+\cdots +\alpha_{n-1}+1)}_{1j} & \cdots & \sum_{j=1}^{n}a^{1k}_{1j}\\
\hline \sum_{j=1}^{1}a^{21}_{1j} & \cdots &
\sum_{j=1}^{1}a^{2\alpha_{1}}_{1j}
& \cdots & \sum_{j=1}^{n}a^{2(\alpha_{1}+\cdots +\alpha_{n-1}+1)}_{1j} & \cdots & \sum_{j=1}^{n}a^{2k}_{1j}\\
\hline \vdots & \ddots & \vdots
& \ddots & \vdots & \ddots & \vdots \\
\hline \sum_{j=1}^{1}a^{k1}_{1j} & \cdots &
\sum_{j=1}^{1}a^{k\alpha_{1}}_{1j}
& \cdots &
\sum_{j=1}^{n}a^{k(\alpha_{1}+\cdots+\alpha_{n-1}+1)}_{1j} &
\cdots & \sum_{j=1}^{n}a^{kk}_{1j}
\end{array} \right)
\end{displaymath}
\end{corollary}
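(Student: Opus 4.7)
\medskip

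\textbf{Proof plan.} The plan is to compute the restriction $A|_{\triangle_{\bowtie}}$ directly in the natural coordinates of $\triangle_{\bowtie}$ and read off its matrix entries using the homogeneous block structure guaranteed by Theorem~\ref{thm:main}.

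First I would set up coordinates on $\triangle_{\bowtie}$. Reorder the rows and columns of $A$ to match the normal form $[1^{\alpha_{1}}2^{\alpha_{2}}\cdots n^{\alpha_{n}}]$, so that $A$ decomposes into $k\times k$ blocks $A_{st}=(a^{st}_{ij})$ conforming to the block partition of $P_{\bowtie}$. Let $l_{s}$ denote the size of the $s$-th equivalence class; then $A_{st}$ is $l_{s}\times l_{t}$. Every $x\in\triangle_{\bowtie}$ can be written blockwise as $x=(x^{(1)},\ldots,x^{(k)})^{T}$, where $x^{(t)}$ is the constant $l_{t}$-vector whose entries all equal a single scalar $y_{t}$. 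The assignment $x\mapsto y=(y_{1},\ldots,y_{k})^{T}$ identifies $\triangle_{\bowtie}$ with $\mathbb{R}^{k}$; the quotient matrix $A_{\bowtie}$ is by definition the unique $k\times k$ matrix satisfying $(Ax)^{(s)}=(A_{\bowtie}\,y)_{s}\cdot \mathbf{1}_{l_{s}}$ for every $x\in\triangle_{\bowtie}$, where invariance of $\triangle_{\bowtie}$ under $A$ (Proposition~\ref{prop:linear-admissible-polysynchronous}) is what guarantees the right-hand side lies in $\triangle_{\bowtie}$ so that this definition makes sense.

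Next I would compute $(Ax)^{(s)}$ componentwise. The $i$-th entry of the $s$-th block is
\begin{displaymath}
(Ax)^{(s)}_{i}=\sum_{t=1}^{k}\sum_{j=1}^{l_{t}}a^{st}_{ij}\,x^{(t)}_{j}=\sum_{t=1}^{k}\Bigl(\sum_{j=1}^{l_{t}}a^{st}_{ij}\Bigr)y_{t},
\end{displaymath}
since $x^{(t)}_{j}=y_{t}$ for all $j$. By Theorem~\ref{thm:main}, each block $A_{st}$ is a homogeneous block matrix, so $\sum_{j=1}^{l_{t}}a^{st}_{ij}$ is independent of $i$; evaluating at $i=1$ gives the common value $\sum_{j=1}^{l_{t}}a^{st}_{1j}$. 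Hence $(Ax)^{(s)}_{i}=\sum_{t=1}^{k}\bigl(\sum_{j=1}^{l_{t}}a^{st}_{1j}\bigr)y_{t}$, which is indeed independent of $i\in\{1,\ldots,l_{s}\}$, confirming $(Ax)^{(s)}$ is constant as required.

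Finally, reading off the coefficient of $y_{t}$ yields $(A_{\bowtie})_{st}=\sum_{j=1}^{l_{t}}a^{st}_{1j}$. Matching $l_{t}$ against the normal-form notation ($l_{t}=1$ for the first $\alpha_{1}$ classes, $l_{t}=2$ for the next $\alpha_{2}$, and so on up to $l_{t}=n$ for the last $\alpha_{n}$ classes) reproduces the upper limits $1,\ldots,n$ displayed in the corollary, completing the derivation. There is no real obstacle here beyond bookkeeping: all the content is packaged in Theorem~\ref{thm:main}, and the only care required is to keep the three block conventions (those of $A$, $P_{\bowtie}$, and $A_{\bowtie}$) indexed consistently so that the block sizes $l_{s},l_{t}$ line up with the exponents $\alpha_{1},\ldots,\alpha_{n}$ in the normal form.
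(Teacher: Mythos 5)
Your proposal is correct and follows essentially the same route as the paper: the paper writes the restriction of $A$ to $\triangle_{\bowtie}$ in the basis of class-indicator vectors $v_{1},\ldots,v_{k}$ and uses block homogeneity to read off the first-row sums as the coefficients, which is exactly your coordinate computation evaluated at $y=e_{t}$. The only cosmetic difference is that you act on a general point of $\triangle_{\bowtie}$ and extract coefficients of $y_{t}$, while the paper acts directly on the basis vectors; the content and bookkeeping are identical.
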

\begin{proof}
Let $\{v_{1}, \ldots , v_{\alpha_{1}}, v_{\alpha_{1}+1}, \ldots ,
v_{k}\}$ be a basis of a synchrony subspace. Each
basis element corresponds to a conjugacy class of the partition
$\bowtie$ and is an $n\times 1$ vector. Therefore, the basis
elements have the following forms:
\begin{eqnarray*}
v_{1} & = & [1,0,   \ldots    , 0]^{t}\\
 & \vdots &  \\
v_{\alpha_{1}} & = & [\underbrace{0,\ldots ,0}_{\alpha_{1}-1}, 1, 0,  \ldots  , 0]^{t}\\
v_{\alpha_{1}+1} & = & [\underbrace{0,\ldots ,0}_{\alpha_{1}}, \underbrace{1, 1}_{2}, 0, \ldots , 0]^{t}\\
 & \vdots &  \\
v_{k} & = & [\underbrace{0,\ldots ,0}_{\sum_{i=1}^{n-1}i\alpha_{i}}, \underbrace{1,   \ldots   , 1}_{n}]^{t}\\
\end{eqnarray*}

Since each block $A_{st}$ is a homogeneous block matrix, i.e.,
the sum of each row is identical, we can express the image of each
basis element using a linear combination of a basis with the sum
of the first row of each $A_{st}$ being used as a coefficient such as:

\begin{eqnarray*}
Av_{1} & = & \left( \begin{array}{c} \sum_{j=1}^{1}a^{11}_{1j} \\
\sum_{j=1}^{1}a^{21}_{1j} \\
\vdots \\
\sum_{j=1}^{1}a^{k1}_{1j}
\end{array} \right)\\
& = &
\sum_{j=1}^{1}a^{11}_{1j}v_{1}+\sum_{j=1}^{1}a^{21}_{1j}v_{2}+\cdots+\sum_{j=1}^{1}a^{k1}_{1j}v_{k}\\
Av_{2} & = & \left( \begin{array}{c} \sum_{j=1}^{1}a^{12}_{1j} \\
\sum_{j=1}^{1}a^{22}_{1j} \\
\vdots \\
\sum_{j=1}^{1}a^{k2}_{1j}
\end{array} \right)\\
& = &
\sum_{j=1}^{1}a^{12}_{1j}v_{1}+\sum_{j=1}^{1}a^{22}_{1j}v_{2}+\cdots+\sum_{j=1}^{1}a^{k2}_{1j}v_{k}\\
 & \vdots &  \\
Av_{k} & = & \left( \begin{array}{c} \sum_{j=1}^{n}a^{1k}_{1j} \\
\sum_{j=1}^{n}a^{2k}_{1j} \\
\vdots \\
\sum_{j=1}^{n}a^{kk}_{1j}
\end{array} \right)\\
& = &
\sum_{j=1}^{n}a^{1k}_{1j}v_{1}+\sum_{j=1}^{n}a^{2k}_{1j}v_{2}+\cdots+\sum_{j=1}^{n}a^{kk}_{1j}v_{k}\\
\end{eqnarray*}

Therefore a $k\times k$ matrix $A_{\bowtie}$, which is the adjacency matrix $A$ restricted on $\triangle_{\bowtie}$, is written as

\begin{displaymath}
A_{\bowtie}=\left( \begin{array}{c|c|c||c||c|c|c}
\sum_{j=1}^{1}a^{11}_{1j} & \cdots &
\sum_{j=1}^{1}a^{1\alpha_{1}}_{1j}
& \cdots & \sum_{j=1}^{n}a^{1(\alpha_{1}+\cdots +\alpha_{n-1}+1)}_{1j} & \cdots & \sum_{j=1}^{n}a^{1k}_{1j}\\
\hline \sum_{j=1}^{1}a^{21}_{1j} & \cdots &
\sum_{j=1}^{1}a^{2\alpha_{1}}_{1j}
& \cdots & \sum_{j=1}^{n}a^{2(\alpha_{1}+\cdots +\alpha_{n-1}+1)}_{1j} & \cdots & \sum_{j=1}^{n}a^{2k}_{1j}\\
\hline \vdots & \ddots & \vdots
& \ddots & \vdots & \ddots & \vdots \\
\hline \sum_{j=1}^{1}a^{k1}_{1j} & \cdots &
\sum_{j=1}^{1}a^{k\alpha_{1}}_{1j}
& \cdots &
\sum_{j=1}^{n}a^{k(\alpha_{1}+\cdots+\alpha_{n-1}+1)}_{1j} &
\cdots & \sum_{j=1}^{n}a^{kk}_{1j}
\end{array} \right)
\end{displaymath}
and this is the adjacency matrix of the quotient network
corresponding to $\bowtie$.
\end{proof}

\section{Computer algorithms}
\label{sec:algorithm}
\subsection{Balanced equivalence relations}
The above combinatorial properties of adjacency matrices leads to
a computer algorithm which determines all balanced equivalence
relations and adjacency matrices $A_{\bowtie}$ of associated
quotient networks $\mathcal{G}/_{\bowtie}$ for a given coupled
network $\mathcal{G}$.

We enumerate all possible equivalence relations $\bowtie$ of
$n$-cells, and test which are balanced. If the top lattice node
is known in advance, e.g. using the algorithm in \cite{Aldis} or
\cite{BelykhHasler2011}, only equivalence relations which refine
this top node need be tested. To test if each $\bowtie$ is
balanced, we construct an $n\times k$ matrix, where $k$ is the
number of equivalence classes of $\bowtie$, from the $n\times n$
adjacency matrix of $\mathcal{G}$. All rows in each equivalence
class are identical for a balanced equivalence relation. Finally,
for balanced equivalence relations, we can construct adjacency
matrices $A_{\bowtie}$ of the corresponding quotient networks
$\mathcal{G}/_{\bowtie}$.

\noindent\underline{\textbf{Step 1:}} For a given $n$-cell coupled cell network
$\mathcal{G}=(\mathcal{C},\mathcal{E},\sim_{C},\sim_{E})$, we
express the corresponding $n\times n$ adjacency matrix $A$ as
\begin{displaymath}
A=[C_{1}\cdots C_{n}]
\end{displaymath}
where $C_{i}\in\mathbb{R}^{n\times 1 }, i=1,\ldots ,n$ are column
vectors. Let $\overline{C}_{p}$ denote the $\bowtie$-equivalence
classes on $\mathcal{C}$ where $p=1,\ldots , k$. For example, if
$\mathcal{C}=\{\{1,3,5\},\{2\},\{4\}\}$ then
$\overline{C}_{1}=\{1,3,5\}$, $\overline{C}_{2}=\{2\}$, and
$\overline{C}_{3}=\{4\}$. Note that
$\sum_{p=1}^{k}|\overline{C}_{p}|=n$. Let $\overline{C}_{p1}$ be
the first element of each equivalence class. We assume that
$\overline{C}_{11}<\overline{C}_{21}<\cdots <\overline{C}_{k1}$,
where these cell numbers are used as indices for row vectors in
Step $3$.

We generate a new $n\times k$ matrix $\widetilde{A}_{\bowtie}$
with columns
\begin{displaymath}
\widetilde{C}_{\bowtie_{p}}=\sum_{j\in\overline{C}_{p}}C_{j}\quad\textrm{for}\quad
p=1,\ldots ,k
\end{displaymath}
for all possible equivalence relations $\bowtie$.

Let $\widetilde{R}_{\bowtie_{i}}\in\mathbb{R}^{1\times k}$, where
$i=1,\ldots ,n$, denote the row vectors of this new $n\times k$
matrix $\widetilde{A}_{\bowtie}$. Therefore,
\begin{displaymath}
\widetilde{A}_{\bowtie}=\left( \begin{array}{c} \widetilde{R}_{\bowtie_{1}}  \\
 \vdots\\
 \widetilde{R}_{\bowtie_{n}}
\end{array} \right)=[\widetilde{C}_{\bowtie_{1}}\cdots \widetilde{C}_{\bowtie_{k}}]
\end{displaymath}

\noindent\underline{\textbf{Step 2:}} Now we determine which
equivalence relations $\bowtie$ are balanced. An
equivalence relation $\bowtie$ on $\mathcal{C}$ is balanced if
and only if for all $p=1,\ldots ,k$ we have:
\begin{equation}
\label{equ:row-valid-condition}
\widetilde{R}_{\bowtie_{l}}=\widetilde{R}_{\bowtie_{m}}
\quad\forall l,m\in\overline{C}_{p}
\end{equation}

\noindent\underline{\textbf{Step 3:}} If the above condition~\ref{equ:row-valid-condition} is satisfied, the $k\times k$
adjacency matrix of the quotient network $A_{\bowtie}$
corresponding to a balanced equivalence relation $\bowtie$ is
given by:
\begin{displaymath}
A_{\bowtie}=\left( \begin{array}{c} R_{\bowtie_{1}}  \\
 \vdots\\
 R_{\bowtie_{k}}
\end{array} \right)
\end{displaymath}
where $R_{\bowtie_{i}}\in\mathbb{R}^{1\times k}$,
$i=1,\ldots ,k$ are representative row vectors in each equivalence class.

\begin{example}
Consider the homogeneous network $\mathcal{G}_{5}$ in Table~\ref{tab:coupled_cell_networks} with the corresponding adjacency matrix shown in Figure \ref{fig:example1}.

\begin{figure}[h!]
\begin{center}
\begin{tabular}{cc}
\multirow{5}{*}{\includegraphics[scale=0.2]{homogeneous_2.pdf}} &
\multirow{5}{*}{$\left(\begin{array}{ccccc}
0 & e_{1}+e_{2} & 0 & e_{1}   & 0 \\
e_{1} & 0   & 0 & e_{1}+e_{2} & 0 \\
e_{2} & 0   & e_{1} & 0   & e_{1} \\
e_{1}+e_{2} & e_{1} & 0 & 0 & 0 \\
e_{2} & 0   & e_{1} & 0   & e_{1} 
\end{array}\right)$}\\
& \\
& \\
& \\
& \\
& \\
&
\end{tabular}
\end{center}
\caption{Homogeneous network $\mathcal{G}_{5}$ in Table~\ref{tab:coupled_cell_networks} with the corresponding symbolic adjacency matrix.}
\label{fig:example1}
\end{figure}

We determine if the equivalence relation (i.e., partition of cells) $\bowtie = (135)(24)$ is balanced or not by the matrix computation. There are two equivalence classes $\overline{C}_{1}=\{1,3,5\}$ and $\overline{C}_{2}=\{2,4\}$. We generate a new $5\times 2$ matrix $\widetilde{A}_{\bowtie}$ by adding columns $1$, $3$, $5$ and $2$, $4$ such that
\begin{displaymath}
\widetilde{A}_{\bowtie}=
\left(\begin{array}{cc}
0 & 2e_{1}+e_{2} \\
e_{1} & e_{1}+e_{2} \\
2e_{1}+e_{2} & 0    \\
e_{1}+e_{2} & e_{1}  \\
2e_{1}+e_{2} & 0  
\end{array}\right)
\end{displaymath}

The equivalence relation $\bowtie$ is balanced if and only if
\begin{displaymath}
[0\quad 2e_{1}+e_{2}]=[2e_{1}+e_{2}\quad 0]=[2e_{1}+e_{2}\quad 0]\quad\textrm{and}\quad [e_{1}\quad e_{1}+e_{2}]=[e_{1}+e_{2}\quad e_{1}].
\end{displaymath}
However, this does not hold. Thus the equivalence relation $\bowtie=(135)(24)$ is not balanced.

On the other hand, let $\bowtie = (124)(3)(5)$. There are three equivalence classes $\overline{C}_{1}=\{1,2,4\}$,  $\overline{C}_{2}=\{3\}$ and $\overline{C}_{3}=\{5\}$. We generate a new $5\times 3$ matrix $\widetilde{A}_{\bowtie}$ by adding columns $1$, $2$, $4$ such that
\begin{displaymath}
\widetilde{A}_{\bowtie}=
\left(\begin{array}{ccc}
2e_{1}+e_{2} & 0 & 0 \\
2e_{1}+e_{2} & 0 & 0\\
e_{2} & e_{1} & e_{1} \\
2e_{1}+e_{2}  & 0 & 0 \\
e_{2} & e_{1} & e_{1}  
\end{array}\right)
\end{displaymath}

The equivalence relation $\bowtie$ is balanced if and only if
\begin{displaymath}
[2e_{1}+e_{2}\quad 0\quad 0]=[2e_{1}+e_{2}\quad 0\quad 0]=[2e_{1}+e_{2}\quad 0\quad 0].
\end{displaymath}
This is satisfied. Thus the equivalence relation $\bowtie=(124)(3)(5)$ is balanced. As a result, the quotient network $\mathcal{G}/_{\bowtie}$ corresponding to the balanced equivalence relation $\bowtie=(124)(3)(5)$ and the associated $3\times 3$ adjacency matrix $A_{\bowtie}$ are given in Figure \ref{fig:example1-quotient}.
\begin{figure}[h!]
\begin{center}
\begin{tabular}{cc}
$A_{\bowtie}$ & $\mathcal{G}/_{\bowtie}$\\
\hline
& \\
\multirow{4}{*}{$\left( \begin{array}{ccc} 
2e_{1}+e_{2} & 0 & 0 \\
e_{2} & e_{1} & e_{1} \\
e_{2} & e_{1} & e_{1} 
\end{array} \right)$} &
\multirow{4}{*}{\includegraphics[scale=0.2]{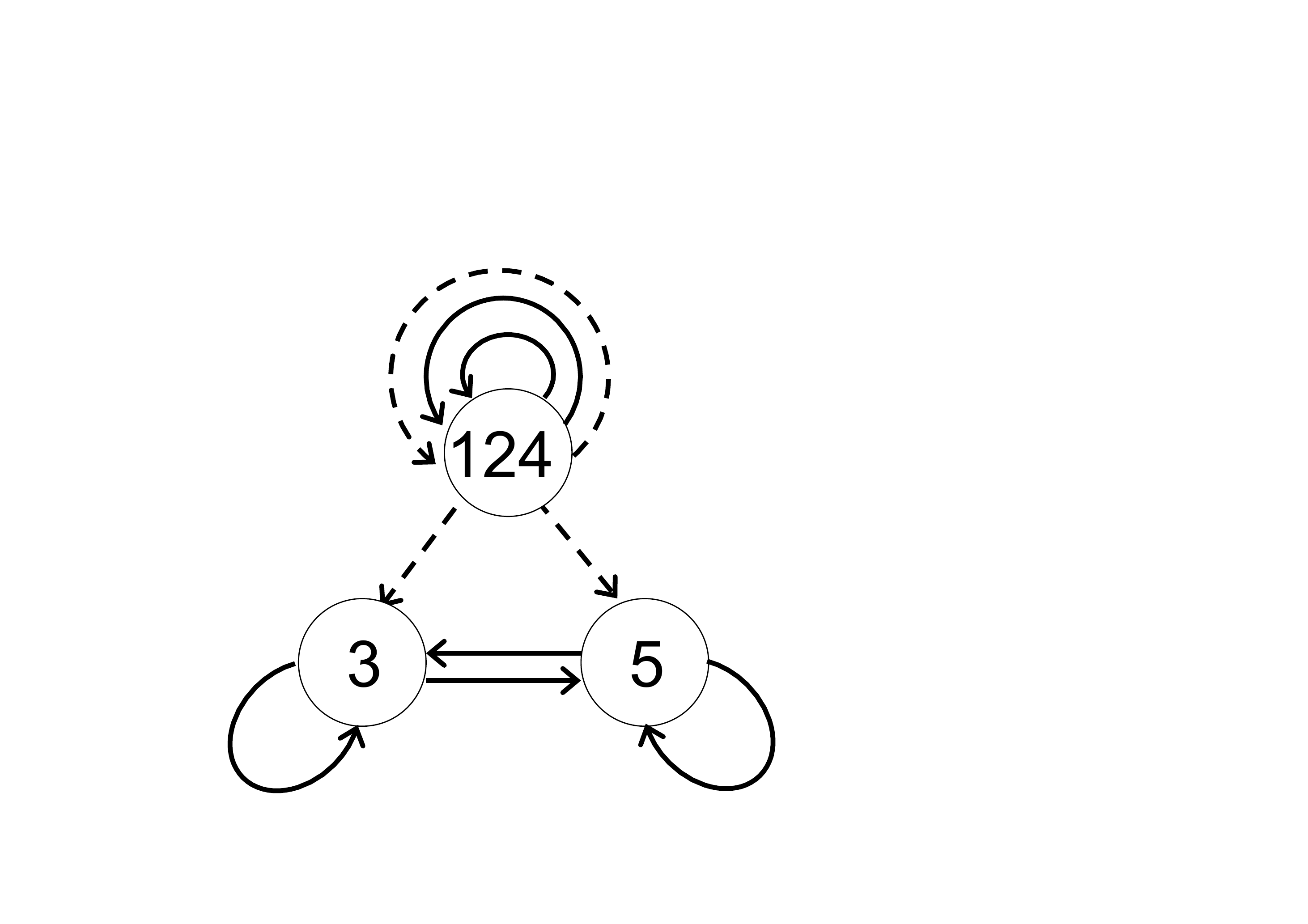}}\\
& \\
& \\
& \\
&
\end{tabular}
\end{center}
\caption{The quotient network $\mathcal{G}/_{\bowtie}$ corresponding to a balanced equivalence relation $\bowtie=(124)(3)(5)$ and the associated $3\times 3$ adjacency matrix $A_{\bowtie}$.}
\label{fig:example1-quotient}
\end{figure}
\label{ex:balanced}
\end{example}
\\
\\
The algorithm as shown above describes the graph $\mathcal{G}$ with a single adjacency matrix $A$ containing symbolic entries for the different arrow types. We now discuss an alternative representation using separate integer matrices for each arrow type, which for most programming languages is more practical to implement. The following definition is a variation of Definition $5.2$ in \cite{Aguiar-minimal} for homogeneous networks.

\begin{definition}
\label{def:adjacency-matrix2}
Let $\mathcal{G}=(\mathcal{C},\mathcal{E},\sim_{C},\sim_{E})$ be an $n$-cell coupled cell network with $l$ cell-types and $m$ arrow-types with $[c_{1}]_{C},\ldots,[c_{l}]_{C}$, the $\sim_{C}$-equivalence classes for cells and $[e_{1}]_{E},\ldots,[e_{m}]_{E}$, the $\sim_{E}$-equivalence classes for arrows. We define the \textit{adjacency matrix} of $\mathcal{G}$ with respect to $[e_{k}]_{E}$, for $k=1,\ldots, m$ to be the $n\times n$ matrix $M_{(\mathcal{G},k)}$. The $(i,j)$-entry corresponds to the number of arrows of types  $[e_{k}]_{E}$ from cell $j$ to cell $i$.
\end{definition}

Notice by construction we have:

\begin{displaymath}
A = \sum^m_{k=1} e_k M_{(\mathcal{G},k)}.
\end{displaymath}

Therefore the above algorithm procedure can now be applied to each of the $m$ arrow type specific matrices individually, and if it holds for all of them, it also holds for $A$ as well. We now repeat Example~\ref{ex:balanced} to demonstrate this.

\begin{example}
For the coupled cell network $\mathcal{G}$, two adjacency matrices $M_{(\mathcal{G},1)}=(m^{1}_{ij})$ (solid) and $M_{(\mathcal{G},2)}=(m^{2}_{ij})$ (dashed) for two different arrow types are defined as in Figure~\ref{fig:example2}.

\begin{figure}[h!]
\begin{center}
\begin{tabular}{cc}
\multirow{5}{*}{\includegraphics[scale=0.2]{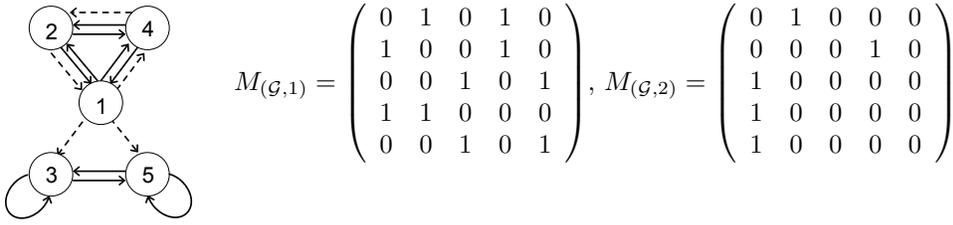}} &
\multirow{5}{*}{
$M_{(\mathcal{G},1)}=\left(\begin{array}{ccccc}
0 & 1 & 0 & 1 & 0 \\
1 & 0 & 0 & 1 & 0 \\
0 & 0 & 1 & 0 & 1 \\
1 & 1 & 0 & 0 & 0 \\
0 & 0 & 1 & 0 & 1 
\end{array}\right)$, 
$M_{(\mathcal{G},2)}=\left(\begin{array}{ccccc}
0 & 1 & 0 & 0 & 0 \\
0 & 0 & 0 & 1 & 0 \\
1 & 0 & 0 & 0 & 0 \\
1 & 0 & 0 & 0 & 0 \\
1 & 0 & 0 & 0 & 0 
\end{array}\right)$}\\
& \\
& \\
& \\
& \\
& \\
&
\end{tabular}
\end{center}
\caption{Homogeneous network $\mathcal{G}_{5}$ in Table~\ref{tab:coupled_cell_networks} with two adjacency matrices $M_{(\mathcal{G},1)}=(m^{1}_{ij})$ (solid) and $M_{(\mathcal{G},2)}=(m^{2}_{ij})$ (dashed) for two different arrow types.}
\label{fig:example2}
\end{figure}

Using these arrow type specific adjacency matrices, we determine if the equivalence relation $\bowtie=(135)(24)$ is balanced. There are two equivalence classes $\overline{C}_{1}=\{1,3,5\}$ and $\overline{C}_{2}=\{2,4\}$. We generate new $5\times 2$ matrices by adding vectors in columns $1$, $3$, $5$ and $2$, $4$:
\begin{displaymath}
\widetilde{M}_{{(\mathcal{G},1)}_{\bowtie}}=
\left(\begin{array}{cc}
0 & 2 \\
1 & 1 \\
2 & 0 \\
1 & 1 \\
2 & 0  
\end{array}\right)
, \quad
\widetilde{M}_{{(\mathcal{G},2)}_{\bowtie}}=
\left(\begin{array}{cc}
0 & 1 \\
0 & 1 \\
1 & 0 \\
1 & 0 \\
1 & 0  
\end{array}\right)
\end{displaymath}

The equivalence relation $\bowtie$ is balanced if and only if for each arrow specific $5 \times 2$ matrix rows $1$, $3$, and $5$ are equal, and rows $2$ are $4$ equal.
However, this does not hold. Thus the equivalence relation $\bowtie=(135)(24)$ is not balanced.

On the other hand, let $\bowtie=(124)(3)(5)$. There are three equivalence classes $\overline{C}_{1}=\{1,2,4\}$,  $\overline{C}_{2}=\{3\}$ and $\overline{C}_{3}=\{5\}$. We generate new $5\times 3$ matrices by adding vectors in columns $1$, $2$, $4$:
\begin{displaymath}
\widetilde{M}_{{(\mathcal{G},1)}_{\bowtie}}=
\left(\begin{array}{ccc}
2 & 0 & 0 \\
2 & 0 & 0 \\
0 & 1 & 1 \\
2 & 0 & 0 \\
0 & 1 & 1  
\end{array}\right)
, \quad
\widetilde{M}_{{(\mathcal{G},2)}_{\bowtie}}=
\left(\begin{array}{ccc}
1 & 0 & 0 \\
1 & 0 & 0\\
1 & 0 & 0 \\
1 & 0 & 0 \\
1 & 0 & 0  
\end{array}\right)
\end{displaymath}

The equivalence relation $\bowtie$ is balanced if and only if for each arrow specific $5 \times 2$ matrix rows $1$, $2$, and $4$ are equal.
This is satisfied. Thus the equivalence relation $\bowtie=(124)(3)(5)$ is balanced. As a result, the quotient network $\mathcal{G}/_{\bowtie}$ corresponding to a balanced equivalence relation $\bowtie=(124)(3)(5)$ and the associated $3\times 3$ arrow type specific adjacency matrices are given in Figure \ref{fig:example2-quotient}.
\begin{figure}[h!]
\begin{center}
\begin{tabular}{cc}
$A_{\bowtie}$ & $\mathcal{G}/_{\bowtie}$\\
\hline
& \\
\multirow{4}{*}{$M_{\bowtie_{(\mathcal{G},1)}}=\left( \begin{array}{ccc} 
2 & 0 & 0 \\
0 & 1 & 1 \\
0 & 1 & 1 
\end{array} \right)$,
$M_{\bowtie_{(\mathcal{G},2)}}=\left( \begin{array}{ccc} 
1 & 0 & 0 \\
1 & 0 & 0 \\
1 & 0 & 0 
\end{array} \right)$} &
\multirow{4}{*}{\includegraphics[scale=0.2]{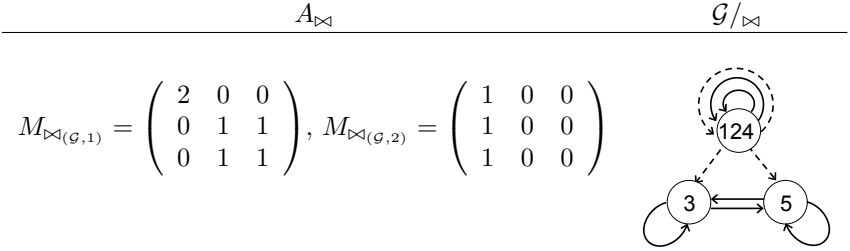}}\\
& \\
& \\
& \\
& \\
&
\end{tabular}
\end{center}
\caption{The quotient network $\mathcal{G}/_{\bowtie}$ corresponding to a balanced equivalence relation $\bowtie=(124)(3)(5)$ and the associated $3\times 3$ arrow type specific adjacency matrices.}
\label{fig:example2-quotient}
\end{figure}
\label{ex:balanced-alternative}

\end{example}

\subsection{Lattice of balanced equivalence relations}
Using the above computer algorithm, we can determine all balanced
equivalence relations and corresponding quotient networks for a
given coupled cell network. Now, using the refinement relation, we construct a complete lattice of balanced equivalence relations for a given coupled cell network.

Let $p$ be the total number of balanced equivalence relations of a given coupled cell network. We aim to compute a $p \times p$ adjacency matrix $L = (l_{ij})$ for the lattice with entries $1$ where $\bowtie_i$ is covered by $\bowtie_j$, and $0$ otherwise.

\noindent\underline{\textbf{Step 1:}} Without loss of generality, order the $p$ balanced equivalence relations by increasing rank (number of equivalence classes). This ensures that the top element is first and the bottom element last, and that the matrix $L$ will be lower triangular.

\noindent\underline{\textbf{Step 2:}}
Construct $p\times p$ matrix $B=(b_{ij})$ with entries $1$ where $\bowtie_i \prec \bowtie_j$ ($\bowtie_i$ refines $\bowtie_j$) and $0$ otherwise. This is almost the desired adjacency matrix, but it includes extra edges since refinement is not as strict as covering.

\noindent\underline{\textbf{Step 3:}}
Calculate $p \times p$ matrix $T = (t_{ij}) = B^2$. Non-zero entries $t_{ij}$ indicate nodes $i$ and $j$ are connected by a path of length two via some intermediate third node $k$, thus $\bowtie_i \prec \bowtie_k \prec \bowtie_j$, meaning $\bowtie_i$ is not covered by $\bowtie_j$. We can assume $k$ is distinct from $i$ and $j$ since the diagonal entries of $B$ are zero.

\noindent\underline{\textbf{Step 4:}}
Construct $p\times p$ matrix $L=(l_{ij})$ using $l_{ij} = 1$ if $b_{ij} = 1$ and $t_{ij} = 0$, and $l_{ij} = 0$ otherwise.

For larger lattices computing the full matrix $T$ in step 3 is increasingly time consuming, and only a fraction of the values are needed in step 4. For computational efficiency, only where $b_{ij} = 1$ do we need to check if $t_{ij} = 0$. We do this by considering the existence of a two step path between lattice nodes $i$ and $j$ via node $k$ (i.e. $b_{ik} =1$ and $b_{kj} = 1$), and as a further optimization only those nodes $k$ with $\text{rank}(i) < \text{rank}(k) < \text{rank}(j)$ need be considered.

The matrix $L$ is the adjacency matrix for the $p$-node lattice, and defines the set of edges. Lattices are by convention drawn as diagrams with an up/down orientation with the top lattice element higher than the bottom lattice element. Additionally we require lattices nodes of the same rank (number of equivalence classes) to be shown at the same height.

\begin{example}
Consider the five-cell homogeneous network $\mathcal{G}_{5}$ in Table~\ref{tab:coupled_cell_networks}. This network has $5$ balanced equivalence relations as shown in Figure \ref{fig:g5}, in rank order.

\begin{figure}[h]
\begin{center}
\begin{tabular}{cl}
$\mathcal{G}_{5}$ & Balanced equivalence relations \\
\hline
\multirow{7}{*}{\includegraphics[scale=0.18]{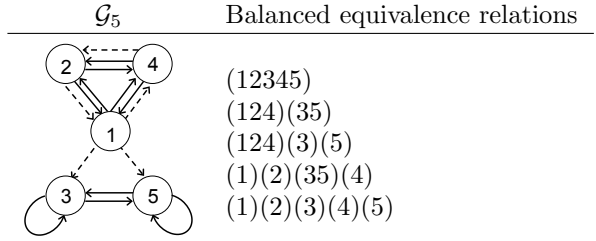}} & \\
& $(12345)$ \\
 & $(124)(35)$ \\
 & $(124)(3)(5)$ \\
 & $(1)(2)(35)(4)$ \\
 & $(1)(2)(3)(4)(5)$ \\\\
\end{tabular}
\end{center}
\caption{Five-cell homogeneous network $\mathcal{G}_{5}$ in Table~\ref{tab:coupled_cell_networks} and all possible $5$ balanced equivalence relations.}
\label{fig:g5}
\end{figure}

We construct the $5\times 5$ matrix $B$, which represents refinement relations between the $5$ balanced equivalence relations. Then $T=B^{2}$ is a simple matrix multiplication, which is used to remove unwanted edges from matrix $B$ to give $L$. Table \ref{tab:adj-lattice} shows these three matrices.  Matrix $L$ is used as the adjacency matrix when drawing the lattice, with vertical positions dictated by the lattice node ranks (Figure~\ref{fig:lattice-g5}, right).
\\
\\

 \begin{table}[h!]
 \begin{center}
 \begin{tabular}{ccc}
 $B$ & $T=B^{2}$ & $L$ \\
 \hline
 \\
 $\left( \begin{array}{ccccc} 
0 & 0 & 0 & 0 & 0\\
1 & 0 & 0 & 0 & 0 \\
1 & 1 & 0 & 0 & 0\\
1 & 1 & 0 & 0 & 0\\
1 & 1 & 1 & 1 & 0 
\end{array} \right)$
&
 $\left( \begin{array}{ccccc} 
0 & 0 & 0 & 0 & 0\\
0 & 0 & 0 & 0 & 0 \\
1 & 0 & 0 & 0 & 0\\
1 & 0 & 0 & 0 & 0\\
3 & 2 & 0 & 0 & 0 
\end{array} \right)$
&
$\left( \begin{array}{ccccc} 
0 & 0 & 0 & 0 & 0\\
1 & 0 & 0 & 0 & 0 \\
0 & 1 & 0 & 0 & 0\\
0 & 1 & 0 & 0 & 0\\
0 & 0 & 1 & 1 & 0 
\end{array} \right)$\\
& &
\end{tabular}
\end{center}
\caption{Three $5\times 5$ matrices, $B$, $T$, and $L$ defined in the lattice algorithm for the network $\mathcal{G}_{5}$ in Table \ref{tab:coupled_cell_networks}.
}  
\label{tab:adj-lattice}
\end{table}

\begin{figure}[h!]
\begin{center}
\begin{tabular}{lc}
Covering relations & Lattice of balanced equivalence relations \\
\hline
 & \multirow{7}{*}{\includegraphics[scale=0.2]{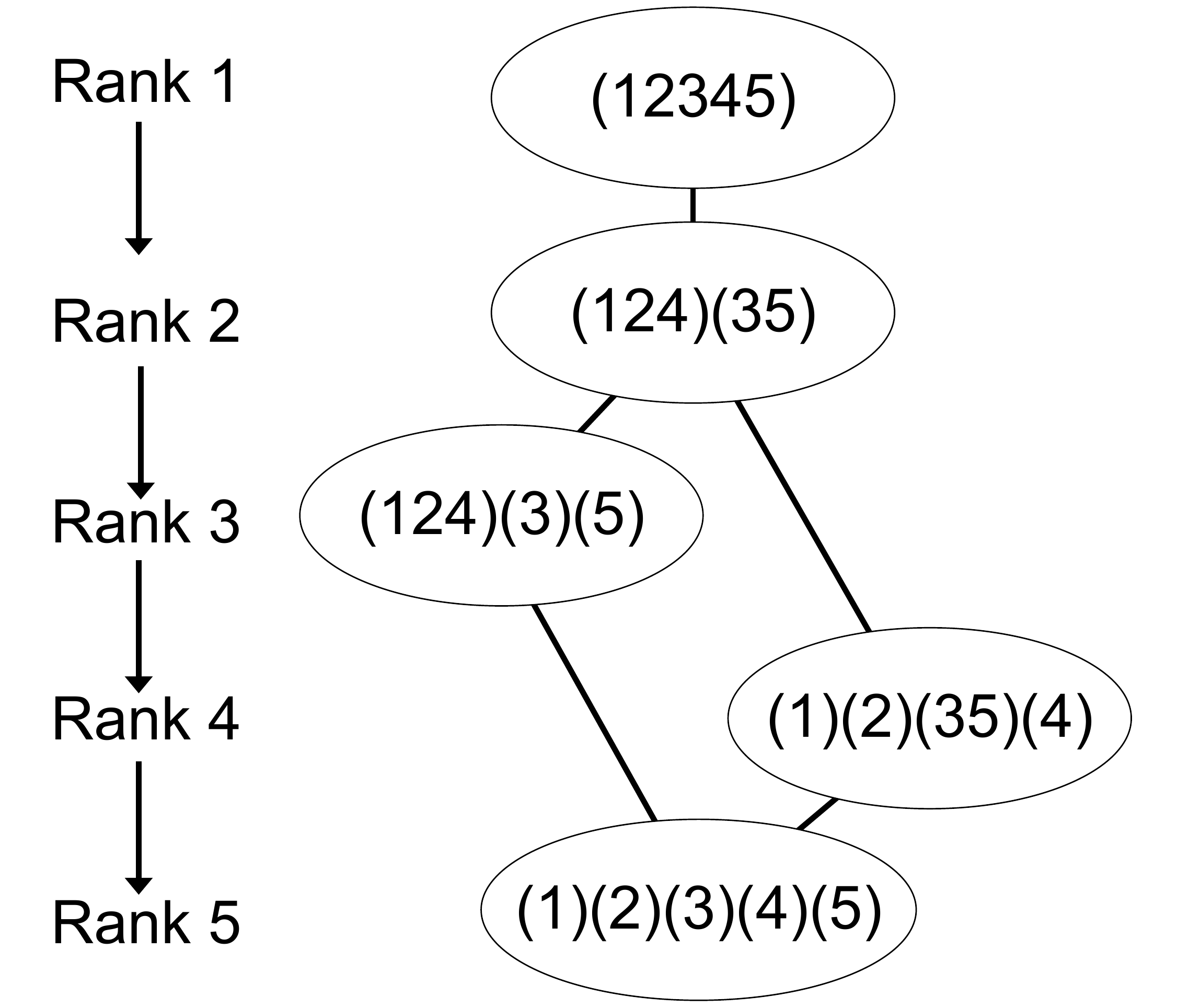}}  \\
$(124)(35)$       $<$ $(12345)$ & \\
&  \\
$(124)(3)(5)$     $<$ $(124)(35)$ &  \\
$(1)(2)(35) (4)$  $<$ $(124)(35)$ &  \\
& \\
$(1)(2)(3)(4)(5)$ $<$ $(124)(3)(5)$ & \\
$(1)(2)(3)(4)(5)$ $<$ $(1)(2)(35)(4)$ & \\
& \\
 & \\
 \end{tabular}
 \end{center}
 \caption{The covering relations and the lattice of balanced equivalence relations of the five-cell homogeneous network $\mathcal{G}_{5}$ in Table~\ref{tab:coupled_cell_networks}.}
 \label{fig:lattice-g5}
 \end{figure}

\end{example}

Computing all the balanced equivalence relations of a network size $n$ scales with the number of equivalence relations, given by the Bell number, and is thus combinatorial with the network size $n$. On a recent computer (2008 Apple Mac Pro) using a single CPU using the brute force approach with a single edge type, small networks take less than a second to compute, $10$ nodes about $20$ seconds, $11$ nodes about $2$ minutes, $12$ nodes about $15$ minutes, $13$ nodes under $2$ hours, and $14$ nodes about $12$ hours (with variation depending on the network topology).

We have also implemented the algorithm in \cite{BelykhHasler2011}, and generalized this to consider multiple arrow types.
The result is equivalent to a single-phase simplification of the algorithm in \cite{Aldis}, but less complicated to implement (see appendix).
Where we have computed the full list of balanced coloring and identified the unique minimal balanced coloring, the results agree. As described above, this offers a shortcut when computing all the balanced equivalence relations, although for highly symmetric networks this optimization has limited benefit -- and for regular networks offers no improvement. However, the time saving can be dramatic especially for random networks. As an example, a bidirectionally coupled chain (where the end nodes do not have self coupling) of up to $20$ nodes takes under a second, $30$ nodes takes about $20$ seconds, and $40$ nodes about $10$ minutes.

The time (and the memory requirements) needed to compute the lattice scales quadratically with the number of lattice nodes. In the worst case of a fully connected network all equivalence relations are balanced, giving the largest possible lattice and the longest compute time, taking about a second for the $877$ node lattice ($n=7$), $20$ seconds for the $4140$ node lattice ($n=8$), and $10$ minutes for the $21147$ node lattice ($n=9$).

In short, in our current algorithm implementation computing the balanced colorings of \emph{regular} networks more than $15$ nodes is impractical, although inhomogeneous networks are much easier to deal with. Additionally, the computations could in principle be run in parallel across multiple CPU cores, giving a potential linear speed up.

\section{Examples}
\label{sec:examples}
The lattice of partial synchronies computed by the algorithm shown tells us about the existence of all possible partial synchronies determined by the given network structure. In this section, we select example topics from synchronized chaos and coupled neuron models, and demonstrate how a symbolic adjacency matrix can be defined for each example problem, and construct a complete lattice of all possible partial synchronies derived from the given networks structure. Some dynamical properties such as the stability of possible partial synchronies depends on the specific form of the given vector field. We demonstrate the numerical analysis of the stability of partial synchronies for the topic of synchronized chaos.
 
\subsection{Coupled identical R\"{o}ssler systems}
\label{subsec:rossler}
We consider a bidirectional ring of six diffusively coupled R\"{o}ssler systems $\mathbf{u}_{i}=(x_{i},y_{i},z_{i})\in\mathbb{R}^{3}$ for $i=1,2,\ldots,6$:
\begin{eqnarray*}
\dot{x}_{i} &=& -(y_{i}+z_{i})+\epsilon(x_{i-1}-2x_{i}+x_{i+1}), \\
\dot{y}_{i} &=& x_{i}+ay_{i}, \\
\dot{z}_{i} &=& b+(x_{i}-c)z_{i},
\end{eqnarray*}
with periodic boundary conditions $x_{0}=x_{6}$ and $x_{7}=x_{1}$.

Since this is a regular network, the adjacency matrix consists of non-negative integers and the admissible vector field is defined by a single map $\mathbf{f}$, which is realized by the above defined system. Table \ref{tab:adj_and_system} shows the adjacency matrix and the associated coupled cell system. The complete lattice of balanced equivalence relations of this network is given in Figure \ref{fig:lattice_rossler}.

\begin{table}[h]
\begin{center}
\begin{tabular}{ccl}
adjacency matrix & \hspace{1cm} & coupled cell system\\
\hline
& & \\
\multirow{5}{*}{
$A=\left(\begin{array}{rrrrrr}
0 & 1 & 0 & 0 & 0 & 1\\
1 & 0 & 1 & 0 & 0 & 0\\
0 & 1 & 0 & 1 & 0 & 0\\
0 & 0 & 1 & 0 & 1 & 0\\
0 & 0 & 0 & 1 & 0 & 1\\
1 & 0 & 0 & 0 & 1 & 0
\end{array}\right)$} & \hspace{1cm} &
\multirow{6}{*}{$\begin{array}{rl}
\dot{\mathbf{u}}_{1} &= \mathbf{f}(\mathbf{u}_{1}, \overline{\mathbf{u}_{6}, \mathbf{u}_{2}})\\
\dot{\mathbf{u}}_{2} &= \mathbf{f}(\mathbf{u}_{2}, \overline{\mathbf{u}_{1}, \mathbf{u}_{3}})\\
\dot{\mathbf{u}}_{3} &= \mathbf{f}(\mathbf{u}_{3}, \overline{\mathbf{u}_{2}, \mathbf{u}_{4}})\\
\dot{\mathbf{u}}_{4} &= \mathbf{f}(\mathbf{u}_{4}, \overline{\mathbf{u}_{3}, \mathbf{u}_{5}})\\
\dot{\mathbf{u}}_{5} &= \mathbf{f}(\mathbf{u}_{5}, \overline{\mathbf{u}_{4}, \mathbf{u}_{6}})\\
\dot{\mathbf{u}}_{6} &= \mathbf{f}(\mathbf{u}_{6}, \overline{\mathbf{u}_{5}, \mathbf{u}_{1}})
\end{array}$} \\
& &\\
& &\\
& &\\
& & \\
& & \\
& &
\end{tabular}
\end{center}
\caption{Adjacency matrix which represents interactions among R\"{o}ssler systems and the associated coupled cell system. The overline indicates that influence from coupling cells to that cell are identical, i.e., $f(x_{i},\overline{x_{j},x_{k}})$ means $f(x_{i},x_{j},x_{k})=f(x_{i},x_{k},x_{j})$.}
\label{tab:adj_and_system}
\end{table}

\begin{figure}[h]
\begin{center}
\includegraphics[scale=0.3]{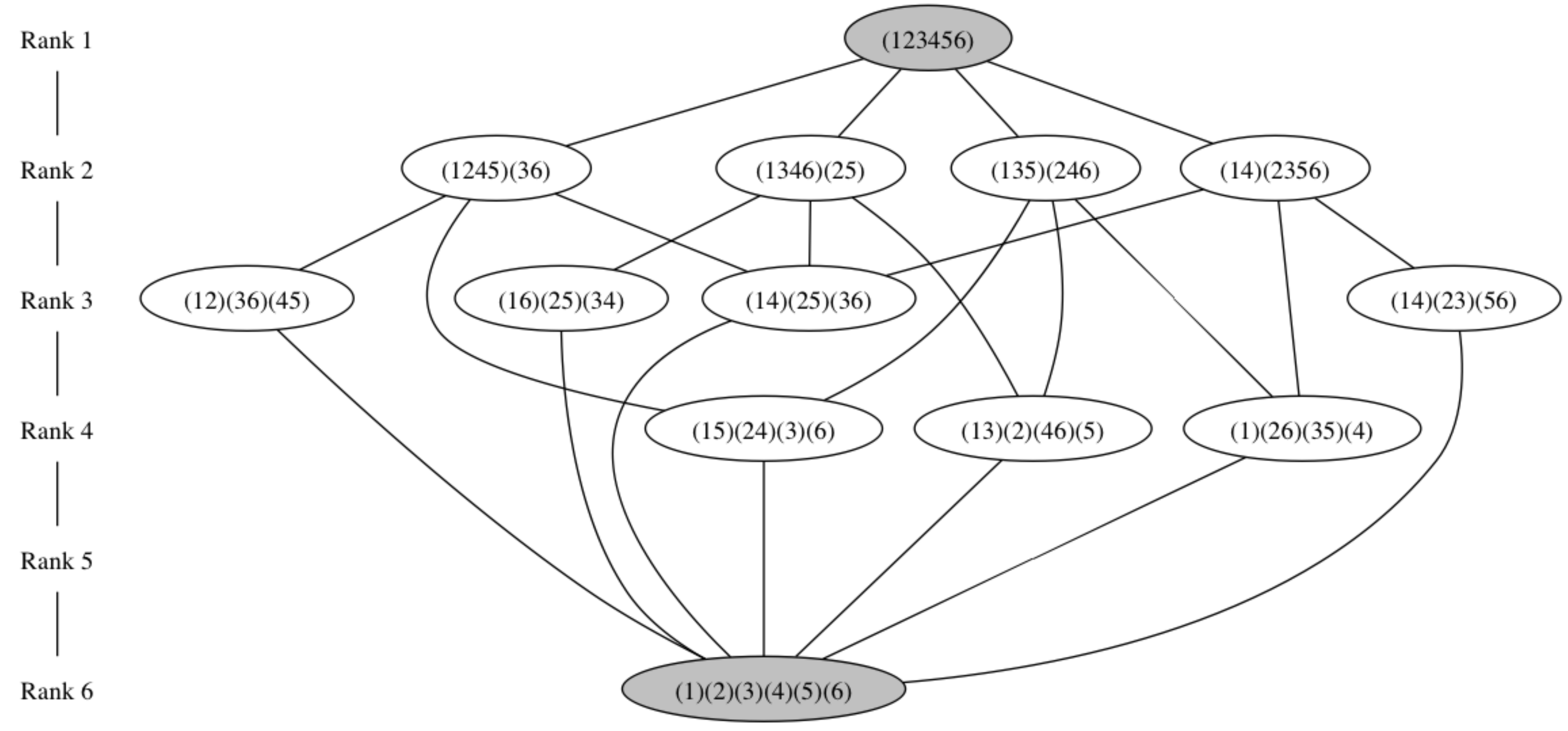}
\end{center}
\caption{Lattice of balanced equivalence relations of bidirectional ring of six diffusively coupled R\"{o}ssler systems. Some partial synchronies are symmetrically related. For example the balanced equivalence relations $(1245)(36)$, $(1346)(25)$, and $(14)(2356)$ are permutation symmetric and they give the same pattern of partial synchrony $(\alpha,\alpha,\beta,\alpha,\alpha,\beta)$ in a bidirectional ring.}
\label{fig:lattice_rossler}
\end{figure}

In the numerical analysis, we take parameter values $a=0.2$, $b=0.2$, $c=5.7$, and vary the coupling parameter $\epsilon$ for the stability analysis of synchrony subspaces of this specific vector field. The full synchrony subspace $\triangle=\{\mathbf{u}_{1}=\cdots=\mathbf{u}_{6}\}$ is globally stable from $\epsilon\approx 0.2$ to $\epsilon\approx 1.0$, and attracts all trajectories starting from randomly chosen initial conditions. With the loss of stability of  full synchronization above $\epsilon\approx 1.0$, the synchrony subspace $\triangle_{\bowtie}=\{\mathbf{u}_{1}=\mathbf{u}_{3}=\mathbf{u}_{5}, \mathbf{u}_{2}=\mathbf{u}_{4}=\mathbf{u}_{6}\}$ becomes globally stable, and attracts all trajectories. Figure \ref{fig:six_rossler_difference} ($a$) shows the behaviour of $x_{1}-x_{2}$, which is the difference between the first internal variables of the R\"{o}ssler systems $\mathbf{u}_{1}$ and $\mathbf{u}_{2}$, when changing the parameter $\epsilon$. This figure shows the gain and loss of stability of the full synchrony subspace. Figure \ref{fig:six_rossler_difference} ($b$) shows the behaviour of $x_{1}-x_{3}$, where the corresponding variables $\mathbf{u}_{1},\mathbf{u}_{3}$ remain synchronized above $\epsilon\approx 1.0$. Only the behaviors of $x_{1}-x_{2}$ and $x_{1}-x_{3}$ are illustrated in Figure \ref{fig:six_rossler_difference}, qualitatively the same behaviors are observed for the other pairwise comparisons.

R\"{o}ssler systems with $x$-component coupling, as shown in this example, are known to exhibit a phenomena called short wavelength bifurcations \cite{Heagy1995} in which the synchronous chaotic state loses its stability with an increase of coupling strength.
The desynchronization behaviour of three diffusively coupled R\"{o}ssler systems with Neumann boundary conditions (a bidirectional chain with self-coupling of the end cells) was analysed in \cite{Belykh2000}. They showed that the singularity of the individual R\"{o}ssler systems and the use of $x$-coupling impose the existence of equilibria that lie outside of the fully synchronous subspace for any coupling strength, and proposed a direct link to the mechanism of desynchronization.

\begin{figure}[h!]
\begin{center}
\begin{tabular}{c}
\includegraphics[scale=0.38]{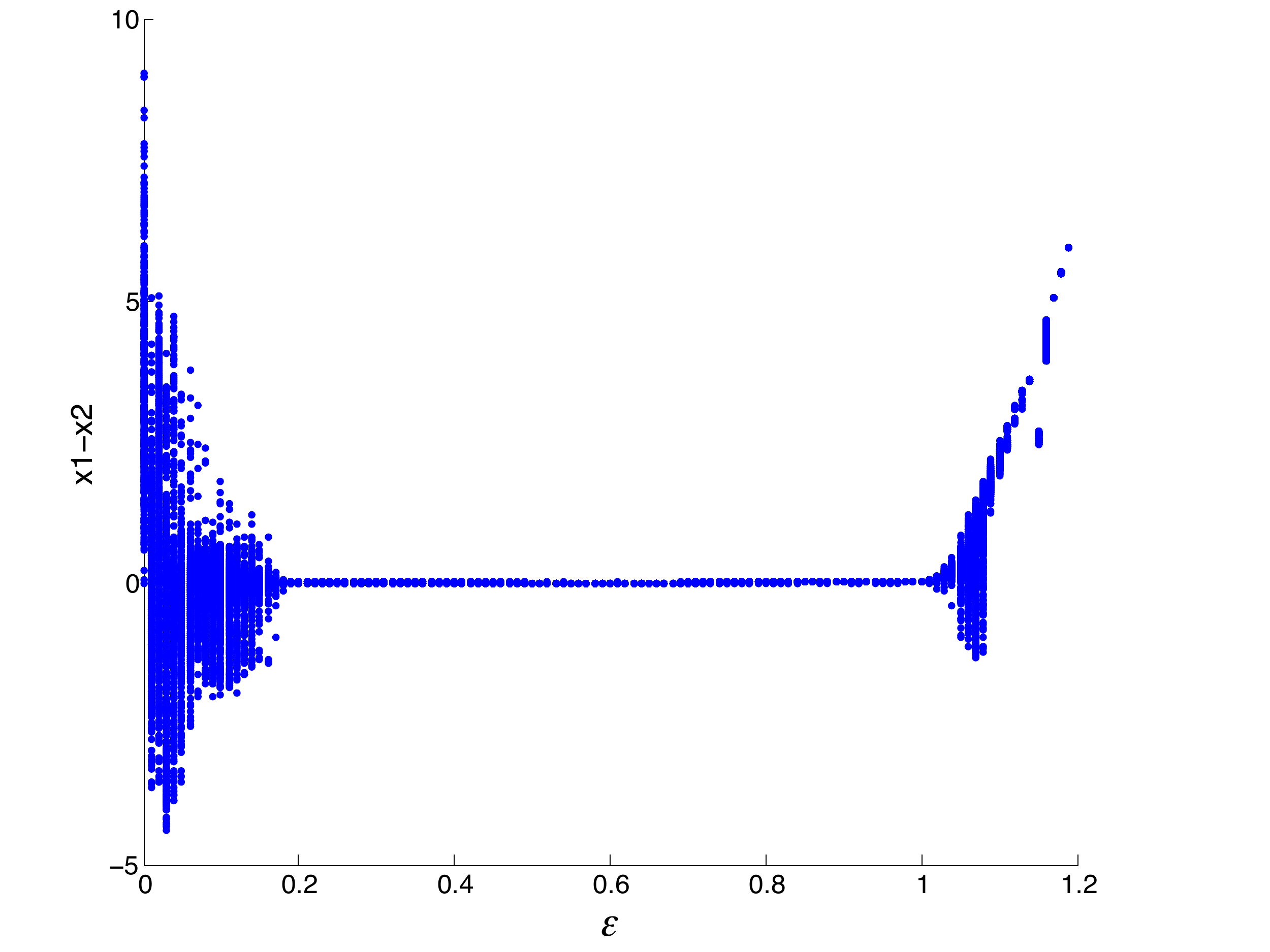}  \\
(a) \\
\\
\includegraphics[scale=0.38]{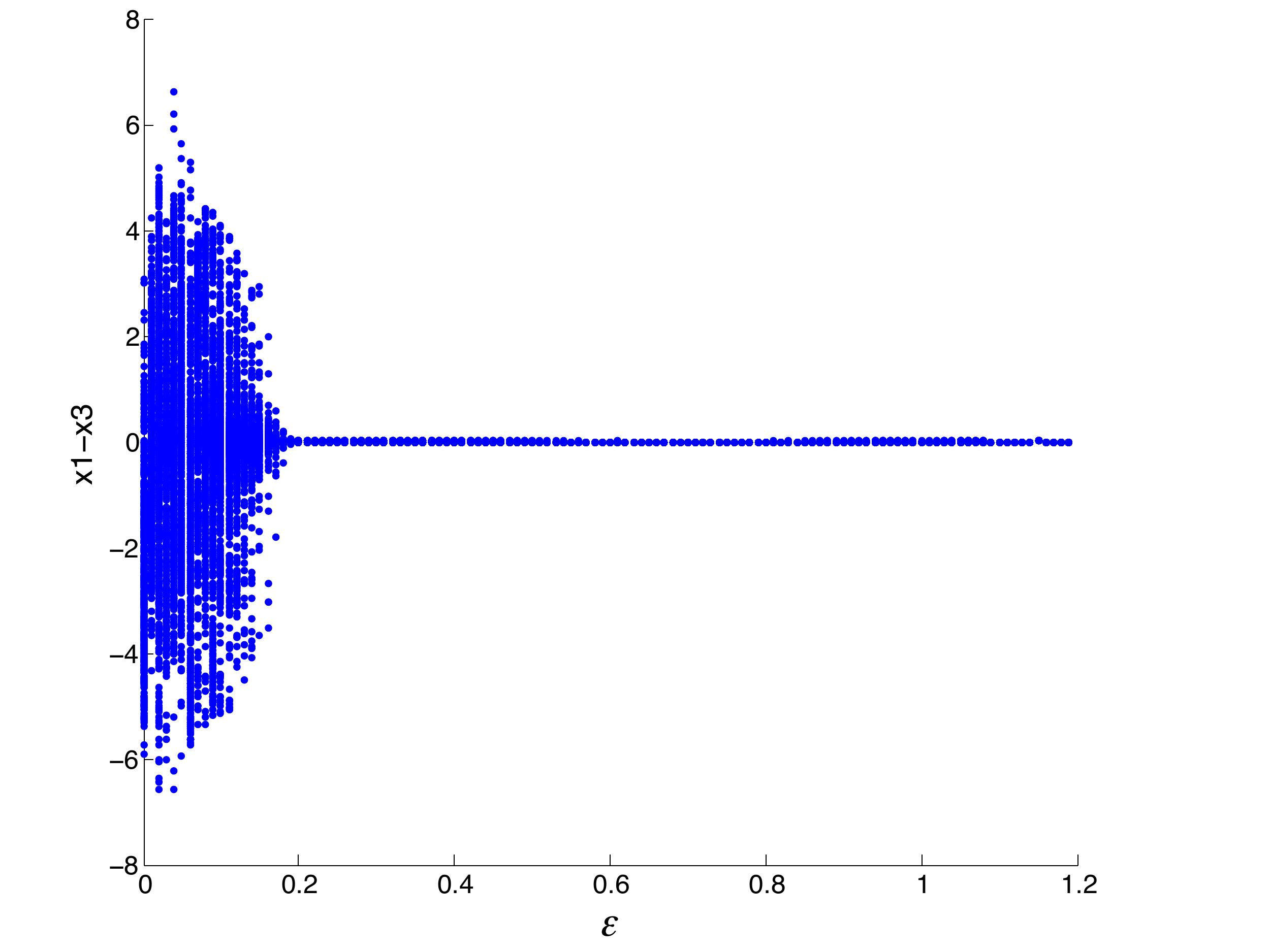} \\
(b)
\end{tabular}
\end{center}
\caption{The stability analysis of synchrony subspaces of a ring of six diffusively coupled R\"{o}ssler systems. (a) The behaviour of $x_{1}-x_{2}$ when changing the coupling parameter $\epsilon$. This difference is chosen as a representative difference of variables associated with the full synchrony subspace $\triangle=\{\mathbf{u}_{1}=\cdots=\mathbf{u}_{6}\}$. The fully synchronous subspace $\triangle$ is stable between $\epsilon\approx 0.2$ and $\epsilon\approx 1.0$. (b) The behaviour of $x_{1}-x_{3}$ when changing the coupling parameter $\epsilon$, chosen as a representative difference of the variables associated with the synchrony subspace $\triangle_{\bowtie}=\{\mathbf{u}_{1}=\mathbf{u}_{3}=\mathbf{u}_{5}, \mathbf{u}_{2}=\mathbf{u}_{4}=\mathbf{u}_{6}\}$. This partial synchrony subspace is observed above $\epsilon\approx 1.0$ (when the fully synchrony  subspace looses stability). The other parameter values are $a=0.2$, $b=0.2$, $c=5.7$. For each fixed parameter value $\epsilon$ (in steps of $0.01$ for $0\leq\epsilon\leq 1.2$), $500$ different initial conditions are generated and the difference $x_{1}-x_{2}$ or $x_1-x_3$ is plotted using the state variables after $40000$ iterates with the time step $h=0.01$. }
\label{fig:six_rossler_difference}
\end{figure}

A lattice-like hierarchy of synchrony subspaces of diffusively coupled identical systems was also discussed in \cite{Belykh2000}. For the chain network associated with Neumann boundary conditions, they hypothesized a clustering type hierarchy structure based on the number of nodes $n$ and its divisors, which we have verified for up to $n=15$ (see supplementary material). A related result for a linear chain with feedback in \cite{Stewart} gives an explicit lattice construction.

\subsection{Coupled Lorenz systems with heterogeneous coupling}
\label{subsec:lorenz}
In the next example, we demonstrate the symbolic adjacency matrix can be interpreted not only as a network structure, but also as different coupling strengths of identical individual systems. Consider cluster synchronization in an ensemble of five globally coupled Lorenz systems $\mathbf{u}_{i}=(x_{i},y_{i},z_{i})\in\mathbf{R}^{3}$ for $i=1,\cdots,5$ with heterogeneous coupling:
\begin{eqnarray*}
\dot{x}_{i} &=& \sigma(y_{i}-x_{i})+\frac{1}{N_{i}}\sum_{j=1}^{5}g_{ij}(x_{i}-x_{j})\\
\dot{y}_{i} &=& x_{i}(\rho-z_{i})-y_{i}\\
\dot{z}_{i} &=& x_{i}y_{i}-\beta z_{i}
\end{eqnarray*}

\noindent
where $\sigma=10$, $\rho=28$, $\beta=8/3$, $N_{i}=\sum_{j=1}^{5}g_{ij}$, and the coupling matrix $G=(g_{ij})$ is defined as

\begin{displaymath}
G=\left(\begin{array}{rrrrr}
0 & 1 & 1 & 2 & 4\\
1 & 0 & 1 & 2 & 4\\
1 & 1 & 0 & 1 & 5\\
1 & 2 & 3 & 0 & 2\\
3 & 2 & 1 & 2 & 0
\end{array}\right)
\end{displaymath}

We may regard an integer value $g_{ij}$ to be a weight from Lorenz system $j$ to Lorenz system $i$, and different non-zero integer values to be different weights from the corresponding systems. Note that $N_{i}=8$ for all $i=1,\ldots,5$. Then symbolically, the above coupling matrix $G$ can be represented with five different symbols, $a$, $b$, $c$, $d$, and $e$. Table \ref{tab:adj_and_system_lorenz} shows the symbolic adjacency matrix and the associated coupled cell system.

\begin{table}[h]
\begin{center}
\begin{tabular}{ccl}
symbolic adjacency matrix & \hspace{1cm} & coupled cell system\\
\hline
& & \\
\multirow{5}{*}{
$G=\left(\begin{array}{rrrrr}
0 & a & a & b & d\\
a & 0 & a & b & d\\
a & a & 0 & a & e\\
a & b & c & 0 & b\\
c & b & a & b & 0
\end{array}\right)$} & \hspace{1cm} &
\multirow{5}{*}{$\begin{array}{rl}
\dot{\mathbf{u}}_{1} &= \mathbf{f}(\mathbf{u}_{1}, \overline{\mathbf{u}_{2}, \mathbf{u}_{3}},\mathbf{u}_{4}, \mathbf{u}_{5})\\
\dot{\mathbf{u}}_{2} &= \mathbf{f}(\mathbf{u}_{2}, \overline{\mathbf{u}_{1}, \mathbf{u}_{3}},\mathbf{u}_{4}, \mathbf{u}_{5})\\
\dot{\mathbf{u}}_{3} &= \mathbf{g}(\mathbf{u}_{3}, \overline{\mathbf{u}_{1}, \mathbf{u}_{2},\mathbf{u}_{4}}, \mathbf{u}_{5})\\
\dot{\mathbf{u}}_{4} &= \mathbf{h}(\mathbf{u}_{4}, \mathbf{u}_{1}, \overline{\mathbf{u}_{2}, \mathbf{u}_{5}},\mathbf{u}_{3})\\
\dot{\mathbf{u}}_{5} &= \mathbf{h}(\mathbf{u}_{5}, \mathbf{u}_{1}, \overline{\mathbf{u}_{2}, \mathbf{u}_{4}},\mathbf{u}_{3})
\end{array}$} \\
& &\\
& &\\
& &\\
& & \\
& &
\end{tabular}
\end{center}
\caption{Symbolic adjacency matrix for network in Figure~\ref{fig:coupled_lorenz} which represents different weights on arrows, and the associated coupled cell system.}
\label{tab:adj_and_system_lorenz}
\end{table}

\begin{figure}[h!]
\begin{center}
\begin{tabular}{ccc}
\includegraphics[scale=0.3]{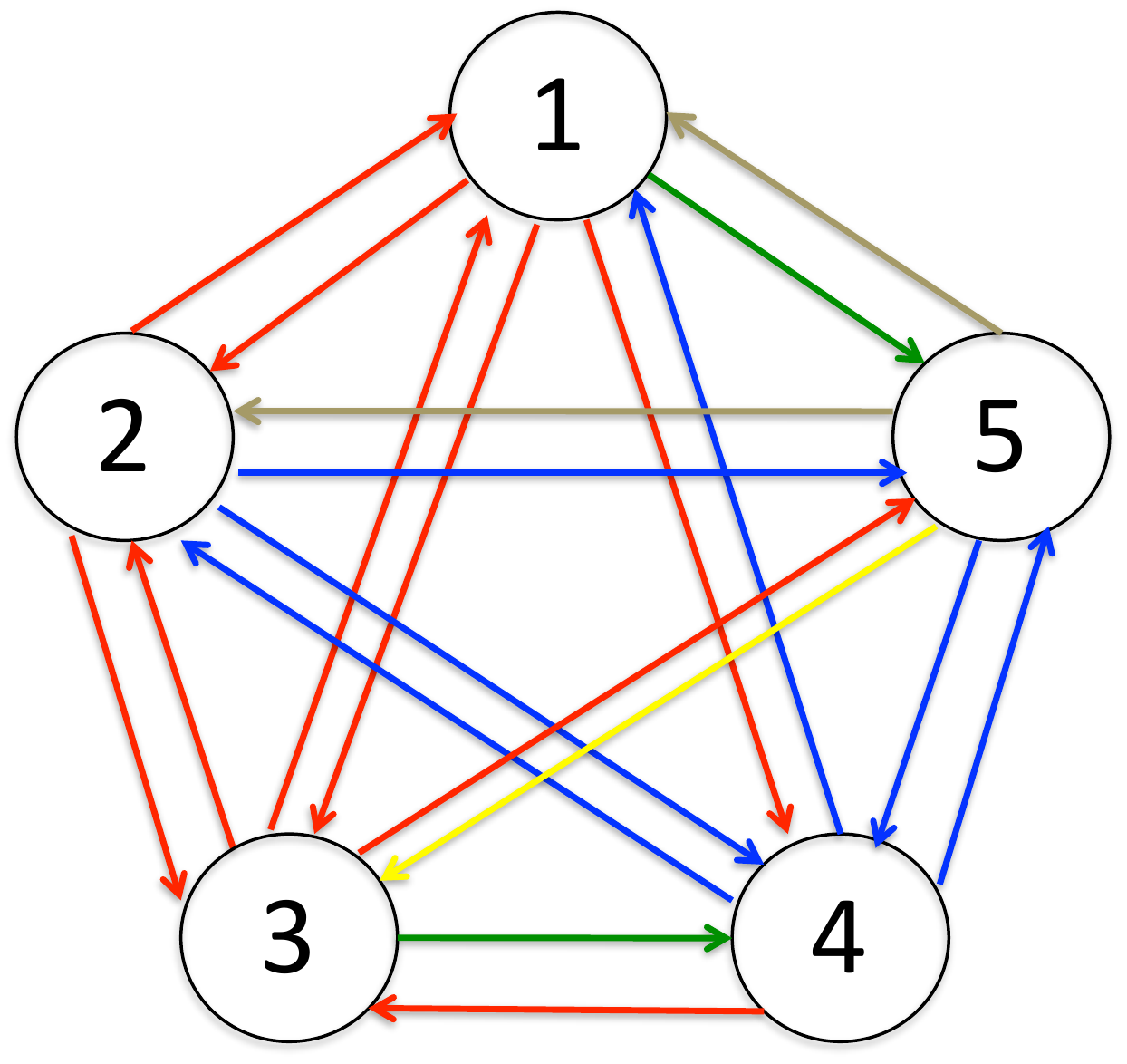} & \hspace{2cm} &
\includegraphics[scale=0.25]{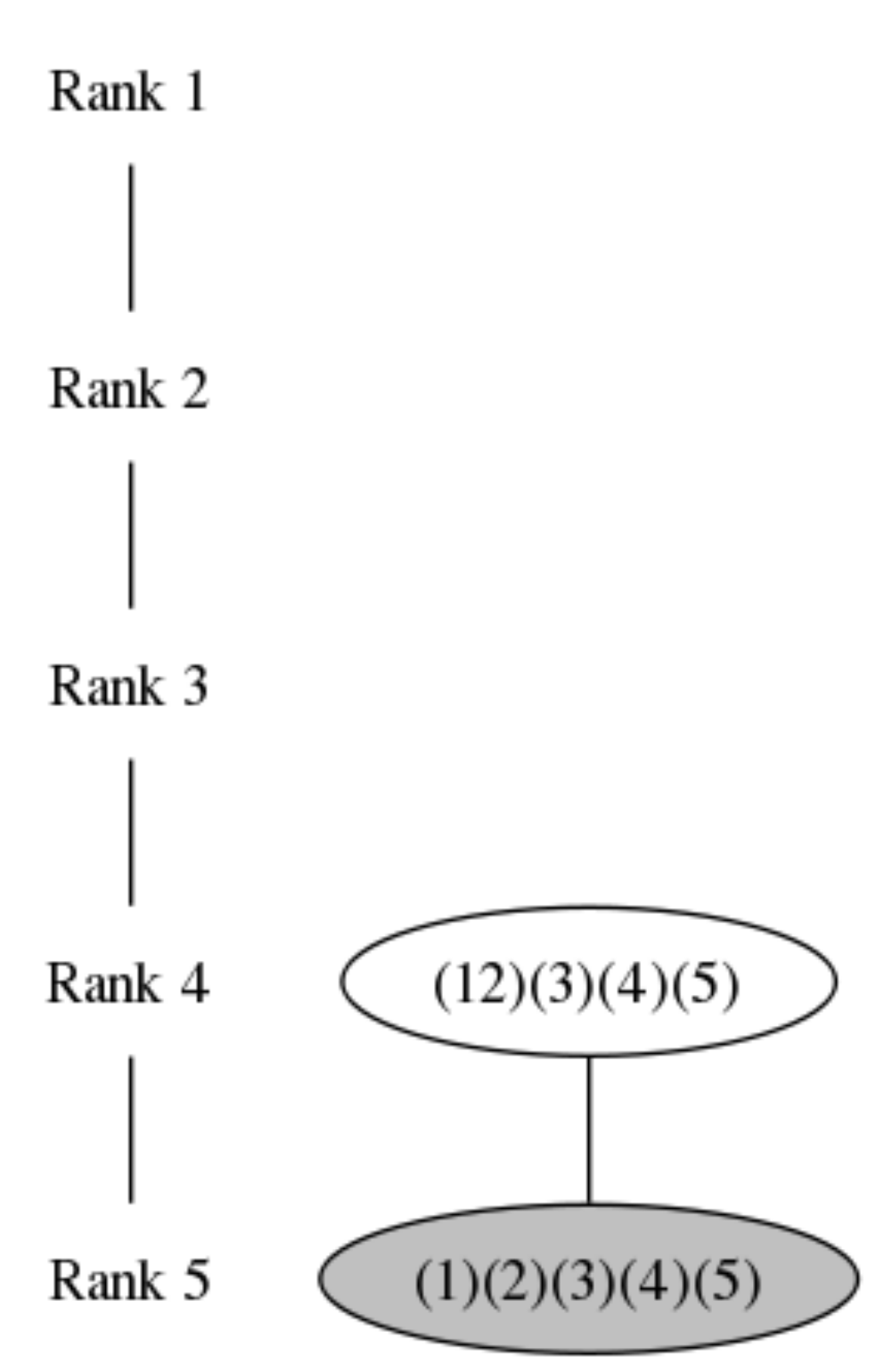}
\end{tabular}
\end{center}
\caption{Five globally coupled Lorenz systems and the associated lattice of balanced equivalence relations, which has a unique non-trivial balanced equivalence relation $(12)(3)(4)(5)$.}
\label{fig:coupled_lorenz}
\end{figure}

Figure \ref{fig:coupled_lorenz} shows five globally coupled Lorenz systems with different weighted arrows represented in different colors and the associated lattice of balanced equivalence relations of the network. If all weights are identical, the corresponding lattice of partial synchronies is the same as the partition lattice of $5$ elements with the Bell number $B_{5}=52$ lattice points. However, in this example with non-identical weights, there is only one non-trivial balanced equivalence relation given by $(12)(3)(4)(5)$, which is found to be unstable by numerical analysis.

We note the intriguing phenomenon termed \textit{bubbling} \cite{Ashwin1994} is related to the stability of synchrony subspaces. When the dynamics on the synchrony subspace is a chaotic attractor, small perturbations along the transverse direction of the synchrony subspace can induce intermittent bursting for some systems. This bubbling phenomenon is observed in synchrony subspaces corresponding to balanced coloring (see the example system ($14.1$) in \cite{Golubitsky-and-Stewart-2006}).

\subsection{Coupled neurons on a random network}
The Aldis \cite{Aldis} or Belykh and Hasler \cite{BelykhHasler2011} algorithm
finds the minimal balanced coloring, which is the balanced equivalence relation with the minimal number of colors (i.e. the minimal number of the synchronized clusters), and thus the top lattice node. Belykh and Hasler demonstrated this using a coupled identical Hindmarsh-Rose model \cite{HR} with $30$ neurons generated by randomly choosing bidirectional identical couplings between any two nodes with a small probability. Even though this network has only one type of cell and one type of coupling, it is not regular. This network has no apparent symmetry using the circular layout (Figure~\ref{fig:BH_thirty_network}(a)) which hides a local reflectional symmetry (Figure~\ref{fig:BH_thirty_network}(b)). The minimal balanced coloring has $23$ colors (i.e. $23$ synchronized clusters). Our algorithm shows the lattice of balanced equivalence relations contains only two lattice points, the trivial bottom lattice node (all distinct) and this one non-trivial balanced coloring. Modifying this network by adding random edges or rewiring existing edges can generate a more complex lattice, for example ten lattice nodes with a minimal balanced coloring of $18$ clusters (Figure~\ref{fig:BH_thirty_network}(c)). The Python code in the supplementary materials finds the lattices for both of these networks. A systematic exploration of the lattices of random networks, such as those generated by rewiring or the Watts-Strogatz model \cite{WattsStrogatz1998}, is an area for future work.

\begin{figure}[h]
\begin{center}
\begin{tabular}{cc}
\multirow{2}{*}[2.2cm]{\includegraphics[scale=0.3]{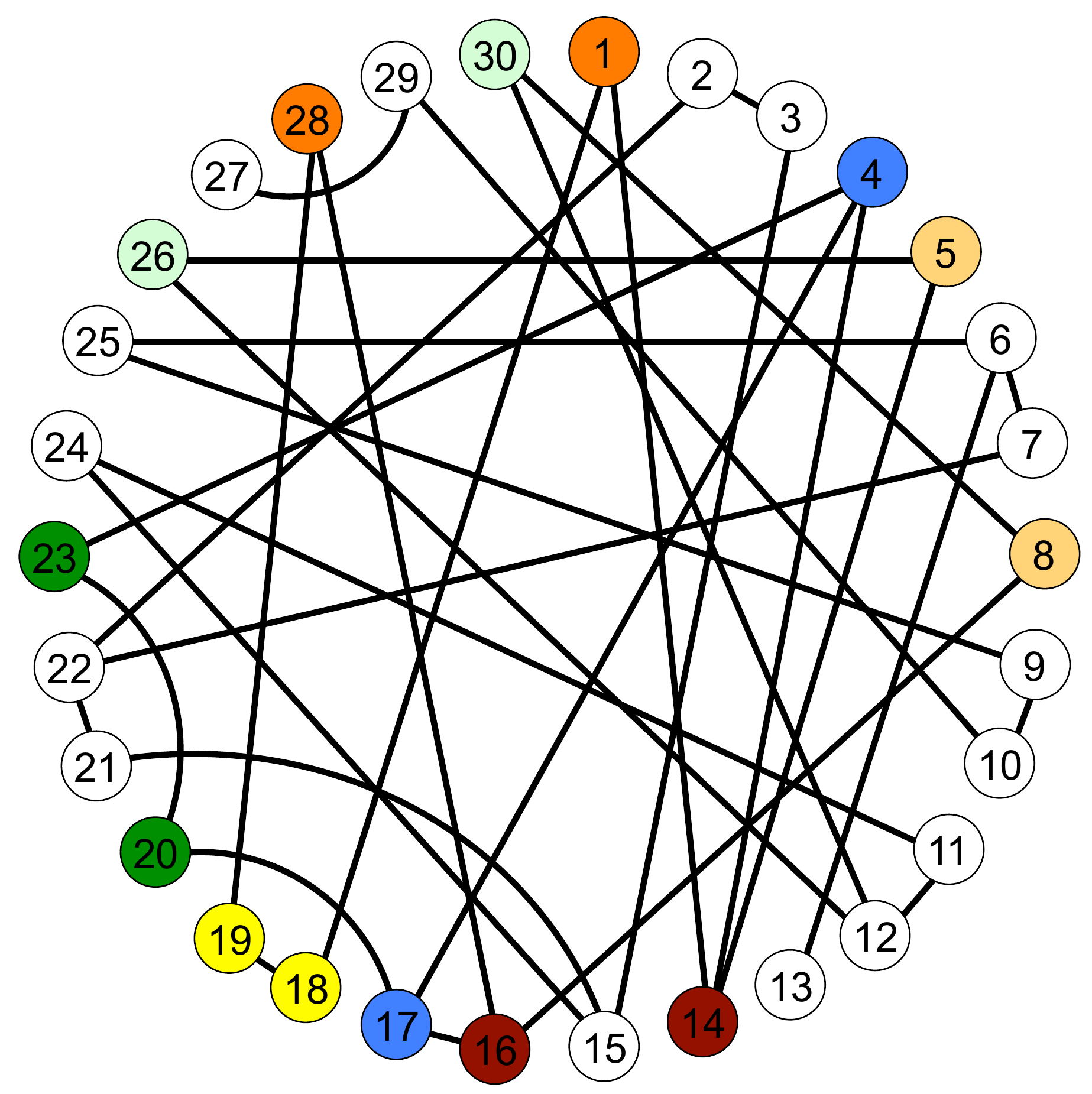}} &
\includegraphics[scale=0.3]{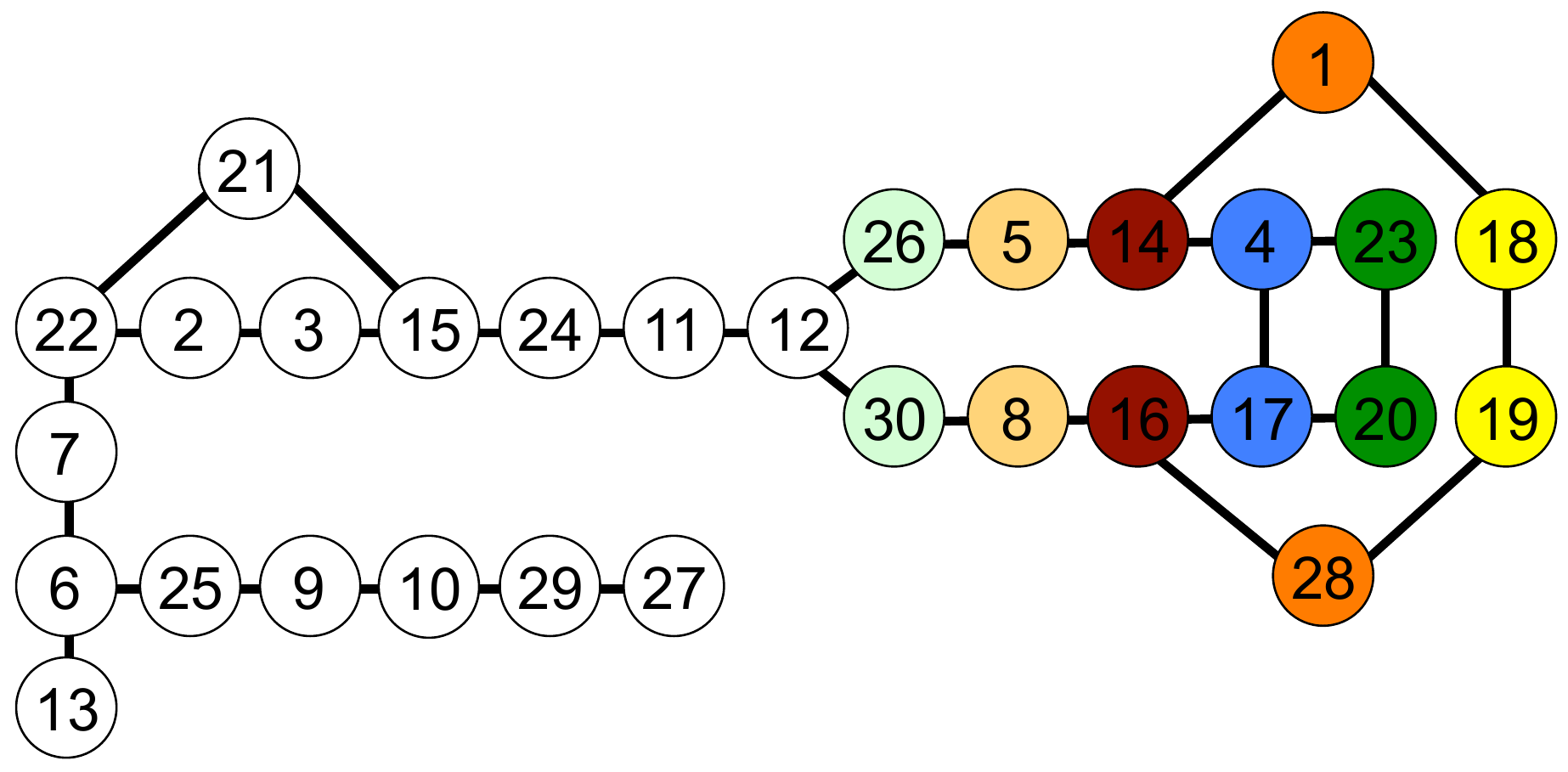} \\
& (b) \\
& \includegraphics[scale=0.3]{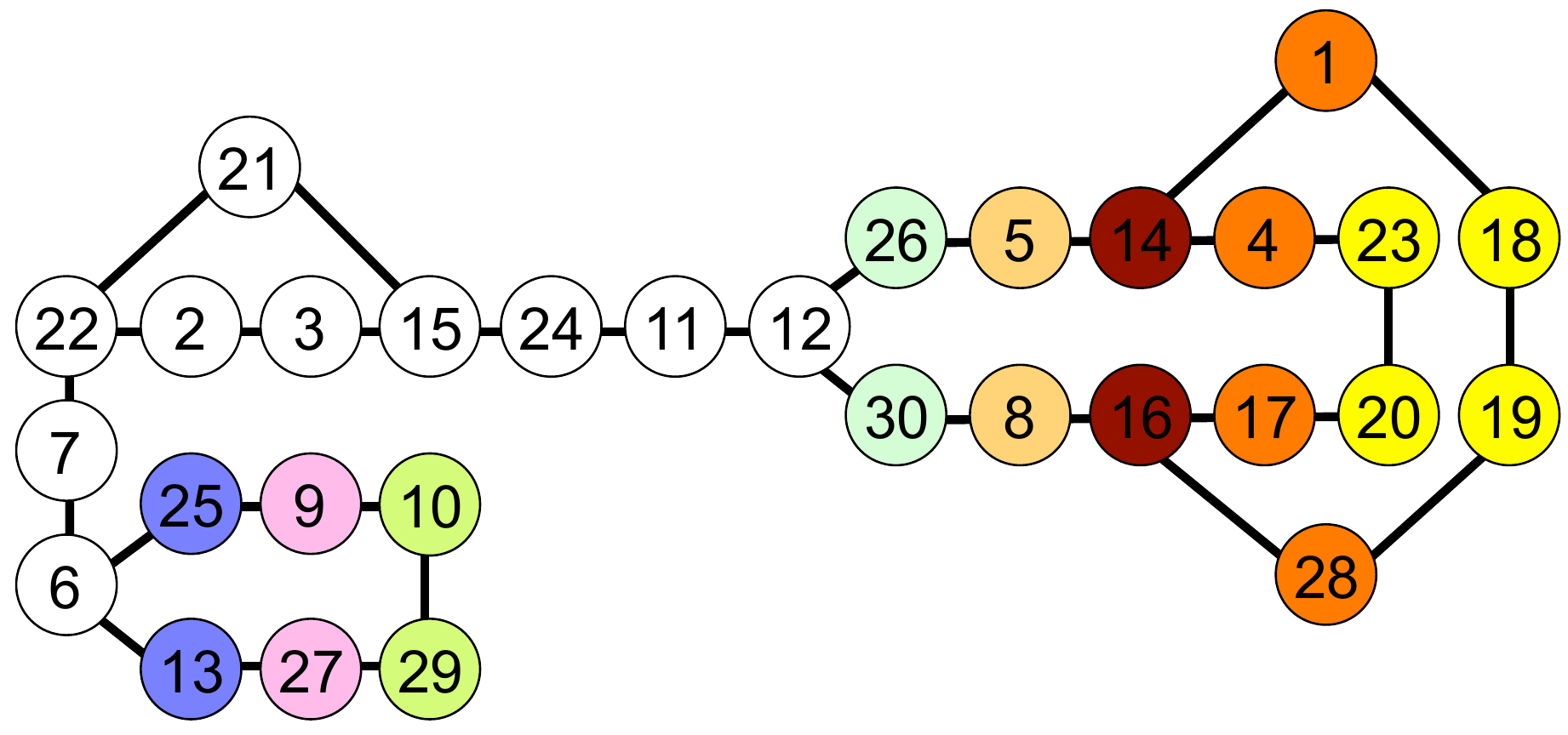} \\
(a) & (c)
\end{tabular}
\end{center}
\caption{Thirty neuron networks with bidirectional coupling. Colored nodes represent clusters in the minimal balanced coloring, with uncolored (white) nodes for distinct uni-clusters. Here (a) and (b) show the same network, from Figure 3(b) in \cite{BelykhHasler2011}, with $23$ clusters. The final figure (c) shows a rewired network connecting nodes $13$ and $27$ in place of $4$ and $17$, resulting in a richer lattice of ten nodes, with $18$ clusters in the minimal balanced coloring. Removing the coupling between $4$ and $17$ increases the local symmetry, as does coupling nodes $13$ and $27$.}
\label{fig:BH_thirty_network}
\end{figure}

\subsection{Excitatory/Inhibitory coupled neurons}
The network of inhibitory coupled FitzHugh-Nagumo model neurons shown in Figure~\ref{fig:HR_nine}(a) is discussed in \cite{Rabinovich2000,Rabinovich2001}.
Table~\ref{tab:adj_and_system_HR} shows the the $9\times 9$ adjacency matrix $C=(c_{ij})$ with integer entries and the associated coupled cell system.
A very similar neural network topology (deleting the arrow from neuron $1$ to $5$) is studied in \cite{Casado2003} using a discrete map instead of ODEs. We remark that our algorithm shows both network structures have the same lattice of $27$ balanced equivalence relations (see lattice generated by the Python code in the supplementary materials). The top lattice node is the synchronized cluster pattern $(19)(2378)(46)(5)$ which was discussed in \cite{Casado2003}.

\begin{figure}[h]
\begin{center}
\begin{tabular}{ccc}
\includegraphics[scale=0.2]{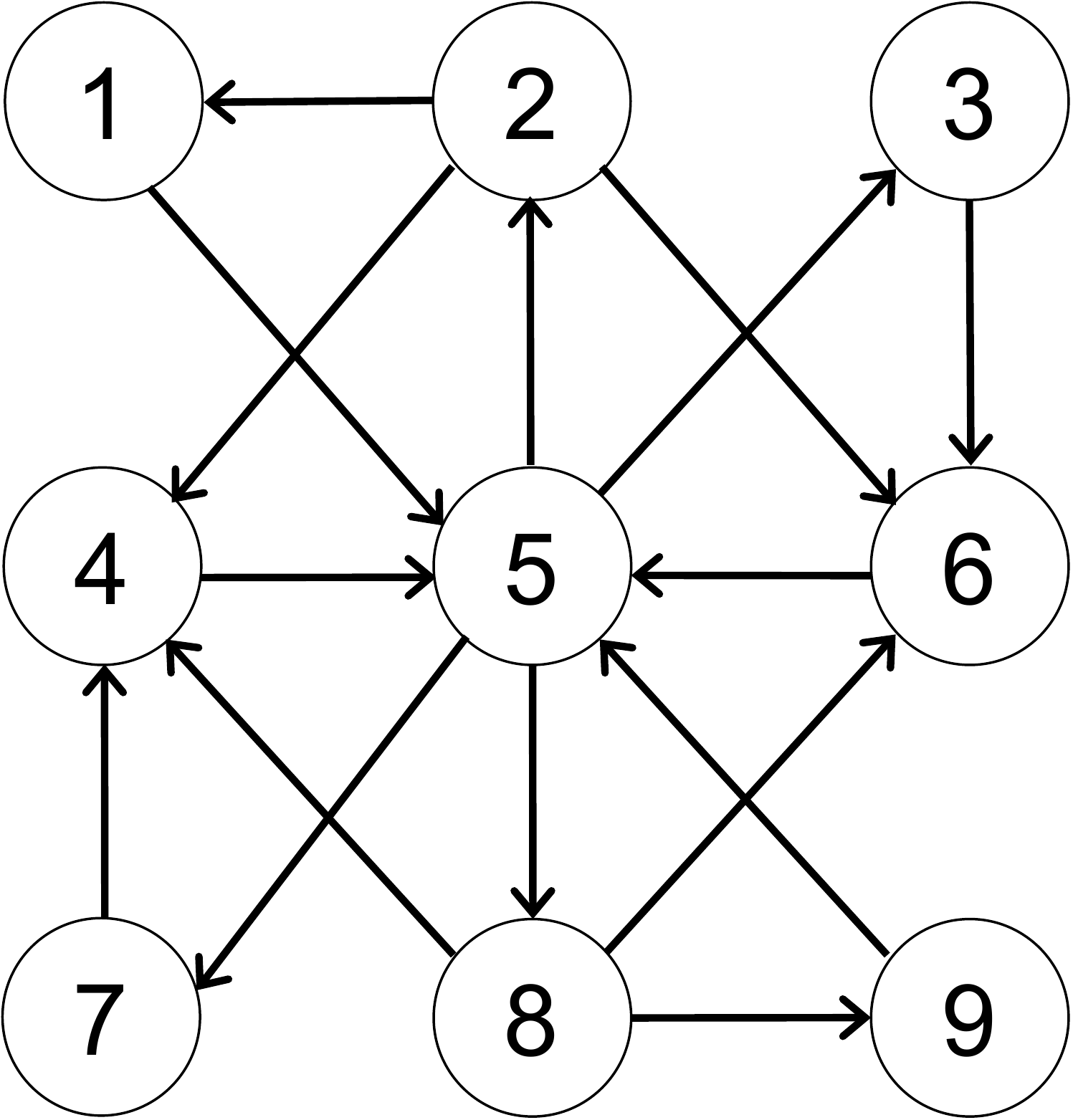} & \quad &
\includegraphics[scale=0.2]{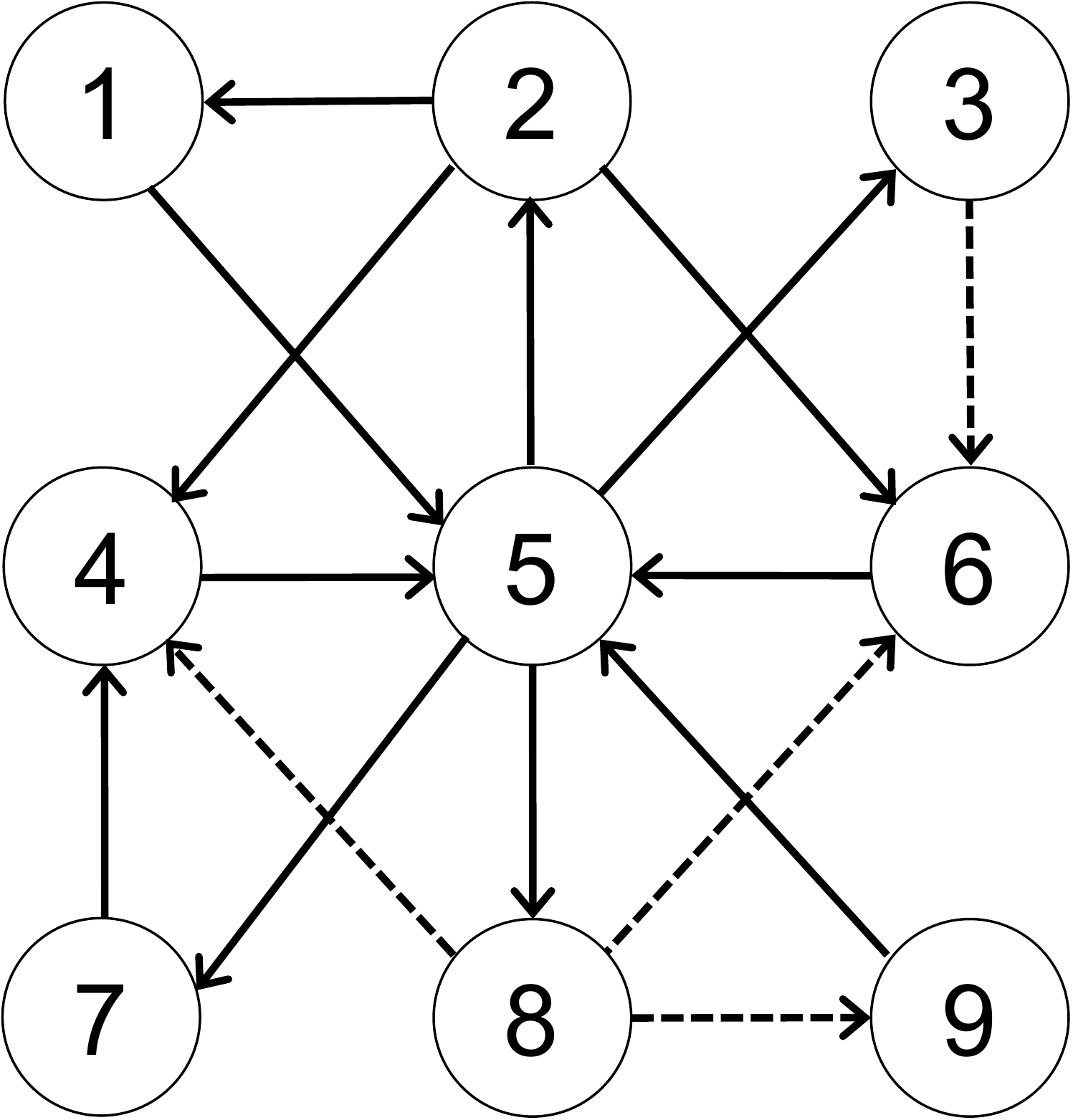} \\
(a) & & (b)
\end{tabular}
\end{center}
\caption{Nine neurons connected with (a) one coupling type as in \cite{Rabinovich2000,Rabinovich2001}, (b) two coupling types (excitatory and inhibitory). Solid arrows represent inhibitory coupling, dashed lines excitatory. In (b) the arrows from nodes $3$ and $8$ are excitatory.}
\label{fig:HR_nine}
\end{figure}

This network structure was studied as a \textit{winnerless competition network} \citep{Rabinovich2000,Rabinovich2001}, in which cluster states (unstable saddle states) are connected along a heteroclinic orbit. Such cluster states can correspond to balanced polydiagonals (invariant subspaces), which are defined by balanced equivalence relations \cite{Ashwin2007,Ashwin2011}. Thus the lattice of balanced equivalence relations might potentially be used to elucidate the possible robust heteroclinic cycles.

\begin{table}[h!]
\begin{center}
\begin{tabular}{ccl}
adjacency matrix & \hspace{1cm} & coupled cell system\\
\hline
& & \\
\multirow{9}{*}{
$C=\left(\begin{array}{rrrrrrrrr}
0 & 1 & 0 & 0 & 0 & 0 & 0 & 0 & 0\\
0 & 0 & 0 & 0 & 1 & 0 & 0 & 0 & 0\\
0 & 0 & 0 & 0 & 1 & 0 & 0 & 0 & 0\\
0 & 1 & 0 & 0 & 0 & 0 & 1 & 1 & 0\\
1 & 0 & 0 & 1 & 0 & 1 & 0 & 0 & 1\\
0 & 1 & 1 & 0 & 0 & 0 & 0 & 1 & 0\\
0 & 0 & 0 & 0 & 1 & 0 & 0 & 0 & 0\\
0 & 0 & 0 & 0 & 1 & 0 & 0 & 0 & 0\\
0 & 0 & 0 & 0 & 0 & 0 & 0 & 1 & 0
\end{array}\right)$} & \hspace{1cm} &
\multirow{9}{*}{$\begin{array}{rl}
\dot{\mathbf{u}}_{1} &= \mathbf{f}(\mathbf{u}_{1}, \mathbf{u}_{2})\\
\dot{\mathbf{u}}_{2} &= \mathbf{f}(\mathbf{u}_{2}, \mathbf{u}_{5})\\
\dot{\mathbf{u}}_{3} &= \mathbf{f}(\mathbf{u}_{3}, \mathbf{u}_{5})\\
\dot{\mathbf{u}}_{4} &= \mathbf{g}(\mathbf{u}_{4}, \overline{\mathbf{u}_{2}, \mathbf{u}_{7},\mathbf{u}_{8}})\\
\dot{\mathbf{u}}_{5} &= \mathbf{h}(\mathbf{u}_{5}, \overline{\mathbf{u}_{1},\mathbf{u}_{4}, \mathbf{u}_{6},\mathbf{u}_{9}}) \\
\dot{\mathbf{u}}_{6} &= \mathbf{g}(\mathbf{u}_{6}, \overline{\mathbf{u}_{2}, \mathbf{u}_{3}, \mathbf{u}_{8}}) \\
\dot{\mathbf{u}}_{7} &= \mathbf{f}(\mathbf{u}_{7}, \mathbf{u}_{5})\\
\dot{\mathbf{u}}_{8} &= \mathbf{f}(\mathbf{u}_{8}, \mathbf{u}_{5})\\
\dot{\mathbf{u}}_{9} &= \mathbf{f}(\mathbf{u}_{9}, \mathbf{u}_{8})\\
\end{array}$} \\
& &\\
& &\\
& &\\
& & \\
& & \\
& & \\
& & \\
& & \\
& &
\end{tabular}
\end{center}
\caption{Adjacency matrix which represents the network topology of $9$ coupled neurons in Figure~\ref{fig:HR_nine}(a), and the associated coupled cell system. Note that this network is not regular since it has three input equivalence classes.
Note also that the single coupling type constrains the linearized external couplings of the three maps $\mathbf{f}$, $\mathbf{g}$ and $\mathbf{h}$ to be the same.
}
\label{tab:adj_and_system_HR}
\end{table}

To demonstrate multiple arrow types, we modified the previous example to consider two coupling types, excitatory or inhibitory, as in Figure~\ref{fig:HR_nine}(b).
By changing the outputs of neurons $3$ and $8$ to be excitatory (i.e. changing four couplings), the number of balanced equivalence relations decreased from $27$ to $15$ (see lattices generated by the Python code in the supplementary materials). Table~\ref{tab:adj_and_system_HR_nonidentical} shows the corresponding symbolic adjacency matrix and the associated coupled cell system.

\begin{table}[h]
\begin{center}
\begin{tabular}{ccl}
symbolic adjacency matrix & \hspace{1cm} & coupled cell system\\
\hline
& & \\
\multirow{9}{*}{
$C=\left(\begin{array}{rrrrrrrrr}
0 & a & 0 & 0 & 0 & 0 & 0 & 0 & 0\\
0 & 0 & 0 & 0 & a & 0 & 0 & 0 & 0\\
0 & 0 & 0 & 0 & a & 0 & 0 & 0 & 0\\
0 & a & 0 & 0 & 0 & 0 & a & b & 0\\
a & 0 & 0 & a & 0 & a & 0 & 0 & a\\
0 & a & b & 0 & 0 & 0 & 0 & b & 0\\
0 & 0 & 0 & 0 & a & 0 & 0 & 0 & 0\\
0 & 0 & 0 & 0 & a & 0 & 0 & 0 & 0\\
0 & 0 & 0 & 0 & 0 & 0 & 0 & b & 0
\end{array}\right)$} & \hspace{1cm} &
\multirow{9}{*}{$\begin{array}{rl}
\dot{\mathbf{u}}_{1} &= \mathbf{f}(\mathbf{u}_{1}, \mathbf{u}_{2})\\
\dot{\mathbf{u}}_{2} &= \mathbf{f}(\mathbf{u}_{2}, \mathbf{u}_{5})\\
\dot{\mathbf{u}}_{3} &= \mathbf{f}(\mathbf{u}_{3}, \mathbf{u}_{5})\\
\dot{\mathbf{u}}_{4} &= \mathbf{k}(\mathbf{u}_{4}, \mathbf{u}_{8},\overline{\mathbf{u}_{2}, \mathbf{u}_{7}})\\
\dot{\mathbf{u}}_{5} &= \mathbf{l}(\mathbf{u}_{5}, \overline{\mathbf{u}_{1},\mathbf{u}_{4}, \mathbf{u}_{6},\mathbf{u}_{9}}) \\
\dot{\mathbf{u}}_{6} &= \mathbf{m}(\mathbf{u}_{6}, \mathbf{u}_{2}, \overline{\mathbf{u}_{3}, \mathbf{u}_{8}}) \\
\dot{\mathbf{u}}_{7} &= \mathbf{f}(\mathbf{u}_{7}, \mathbf{u}_{5})\\
\dot{\mathbf{u}}_{8} &= \mathbf{f}(\mathbf{u}_{8}, \mathbf{u}_{5})\\
\dot{\mathbf{u}}_{9} &= \mathbf{n}(\mathbf{u}_{9}, \mathbf{u}_{8})\\
\end{array}$} \\
& & \\
& & \\
& & \\
& & \\
& & \\
& & \\
& & \\
& & \\
& & 
\end{tabular}
\end{center}
\caption{Symbolic adjacency matrix which represents the network topology of $9$ coupled neurons with excitatory (symbol `a') and inhibitory (symbol `b') coupling in Figure~\ref{fig:HR_nine}(b), and the associated coupled cell system.}
\label{tab:adj_and_system_HR_nonidentical}
\end{table} 

\section{Conclusions}
\label{sec:conclusions}
Networks in real world applications are inhomogeneous. Individual systems in a network play different roles and they interact with each other in various ways. For example even in a simplified representation of gene regulatory networks, genes or proteins can interact either by activation or inhibition. We encoded different types of interaction in such inhomogeneous networks using a symbolic adjacency matrix. We considered possible partial synchronies of a given network, where the network elements can be grouped into clusters whose dynamics are self-synchronous. We are particularly interested in partial synchronies which are solely determined by the structure (topology) of the network, rather than the specific dynamics (such as function forms and parameter values). Such robust patterns of synchrony are associated with balanced equivalence relations, which can be determined by a matrix computation on the symbolic adjacency matrix. These symbolic adjacency matrices can alternatively be expressed as a linear combination of integer entry adjacency matrices for each coupling type, and the matrix computation applied to each arrow type matrix individually (as in the provided Python program). The later is simpler to implement as most programming languages or software packages do not support symbolic matrices. Using the Symbolic Python library (http://www.sympy.org/) our example program can be modified to work on symbolic matrices (not shown as the extra dependency complicates installation for no practical benefit).

The symbolic adjacency matrix therefore specifies the set of balanced equivalence relations for a network. From these the refinement relation gives a complete lattice. Rather than obtaining the lattice in this way by exhaustive computation, for the special case of regular networks with simple eigenvalues, Kamei \cite{Kamei,Kamei-part1,Kamei-part2} showed how to construct the lattice from building blocks related to the eigenvector/eigenvalues of the adjacency matrix, and use it  to predict the existence of codimension-one steady-state bifurcation branches from the fully synchronous state, and to classify synchrony-breaking bifurcation behaviors. Results in \cite{Elmhirst-Golubitsky,Leite,Aguiar-2007,Aguiar,Golubitsky-and-Lauterbach} also relate the eigenvalues of the Jacobian of a coupled cell system with the eigenvalues of the adjacency matrix of a homogeneous network for synchrony-breaking bifurcation analysis.
The connections between the algebraic properties of the lattice and the network dynamics are potentially of wide interest.

In general however, such theoretical approaches for the explicit construction of the lattice do not yet exist, leaving the ``brute force'' approach of calculating all the possible balanced equivalence relations as the only currently viable route. We hope that use of this algorithm will facilitate further theoretical work, and stimulate investigation linking lattice properties and synchronous dynamics, and ultimately links between network structure and dynamics.

\section*{Acknowledgments}
H. Kamei thanks Prof. Ian Stewart for his supervision while the foundations of this work was carried out at the University of Warwick. 
 
\bibliographystyle{siam}
\addcontentsline{toc}{section}{References}
\bibliography{hiroko_bibliography} 

\newpage
\section{Appendix - Algorithm to find the top lattice node}

The algorithm we use is a generalization of Belykh and Hasler~\citep{BelykhHasler2011} to consider multiple arrow types, or equivalently a simplification of the Aldis~\citep{Aldis} algorithm where phase one is eliminated at the cost of counting absent arrow type/tail-node color combinations. This simplification makes it easier to implement than the original Aldis algorithm, yet it is still more than fast enough for our needs. The nine-cell network in Figure~\ref{fig:HR_nine}(b) with two arrow types is used as an example:

\subsubsection*{Step 0}
Start by assigning the same color (node class) to each node, here shown in red. If multiple node types are considered as in Aldis, then each node type would be allocated a unique color. Aldis also classifies the arrows but we skip with that.

\subsubsection*{Step 1}
In each step compute the ``input driven refinement'' by tallying the inputs to each node according to the color of the node the input is from, \emph{and} the arrow type. After tabulation, unique input combinations give the next node partition.

See Figure~\ref{fig:appendix-gbh1}. Here we have one node color (red), and two arrow types (solid and dashed), so for each node there are two input counts (solid from red, dashed from red).
Here Aldis would also compute the same two input counts per node.

\begin{figure}[h!]
\begin{center}
\begin{tabular}{lc}
\multirow{3}{*}[-0.2cm]{\includegraphics[height=3cm]{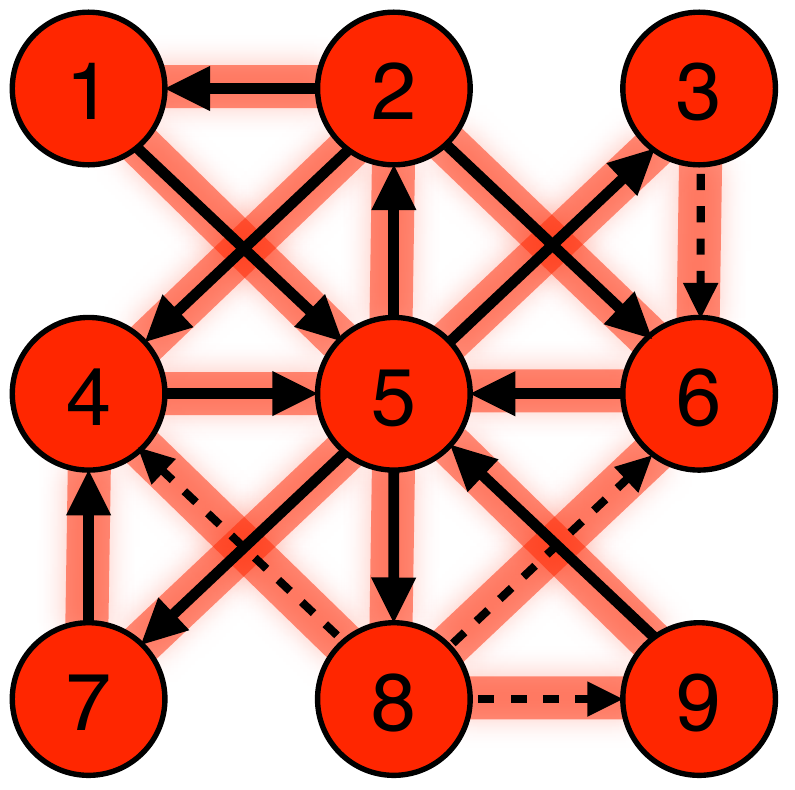}} & Old partition: (123456789) \\
 &
{\includegraphics[height=2.5cm]{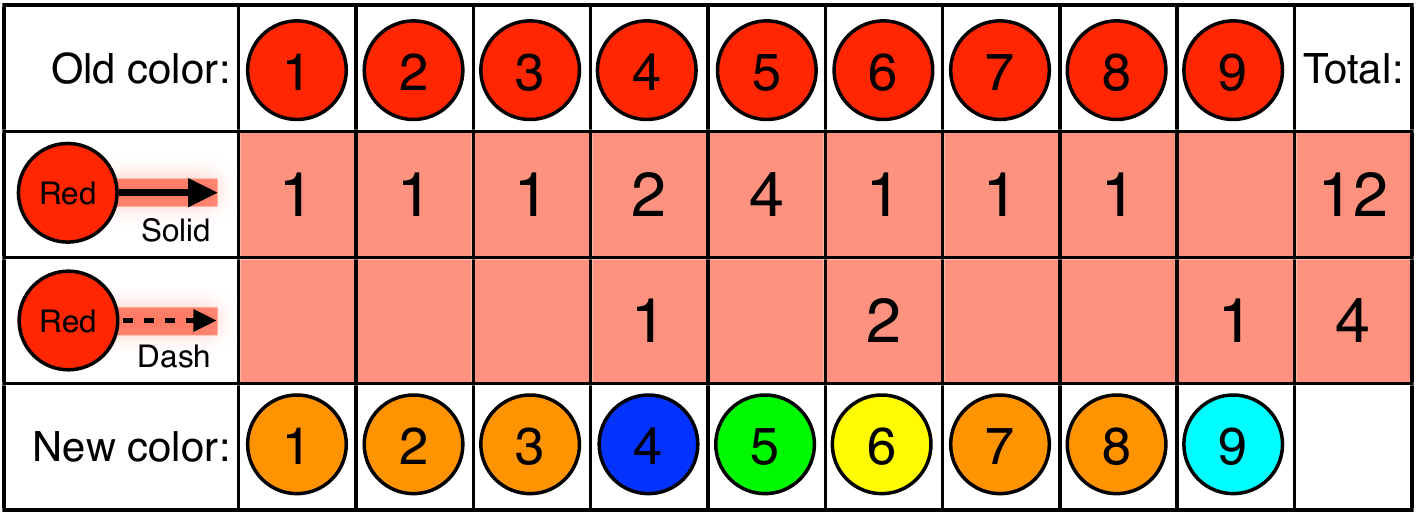}} \\
& New partition: (12378)(4)(5)(6)(9) \\
\end{tabular}
\end{center}
\caption{Finding Top Lattice Node (Step 1), input driven refinement of partition $(123456789)$.}
\label{fig:appendix-gbh1}
\end{figure}

We observe five unique input combinations, and so assign them five colors as the ``input driven refinement'' of the (trivial) input partition. For instance, nodes with one solid input from a red node only have been assigned the new partition color orange.

\subsection*{Step 2}

There are now five node colors (shown here as orange, blue, green, yellow and cyan). Thus with two arrow types (solid and dashed), we consider ten input types ($5{\times}2=10$; solid from orange, $\ldots$, solid from cyan, dashed from orange, $\ldots$, dashed from cyan). See Figure~\ref{fig:appendix-gbh2}.


\begin{figure}[h!]
\begin{center}
\begin{tabular}{lc}
& Old partition: (12378)(4)(5)(6)(9) \\
\multirow{1}{*}[6.5cm]{\includegraphics[height=3cm]{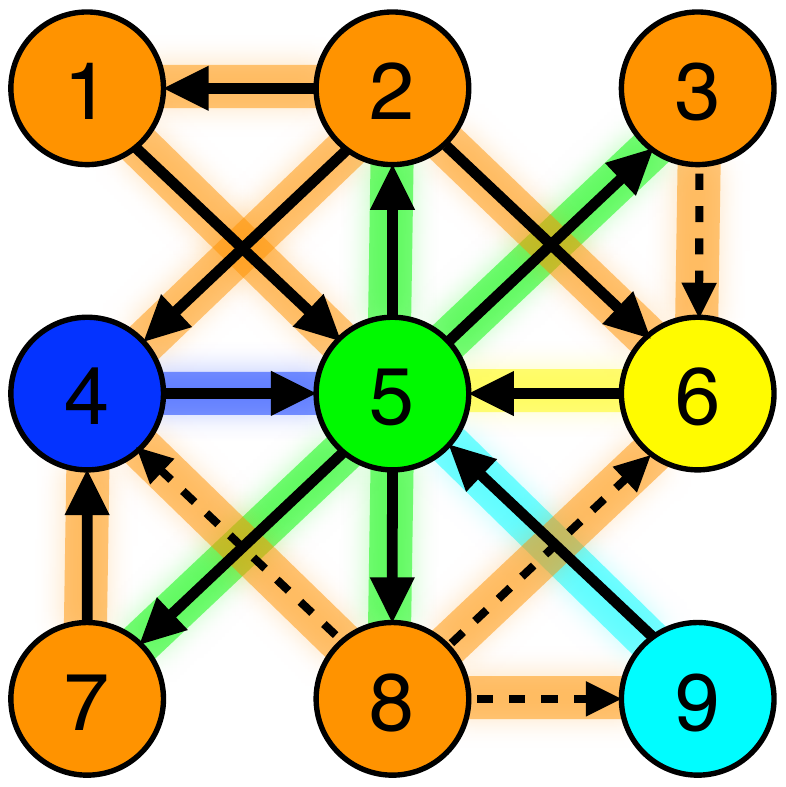}} &
{\includegraphics[height=7.5cm]{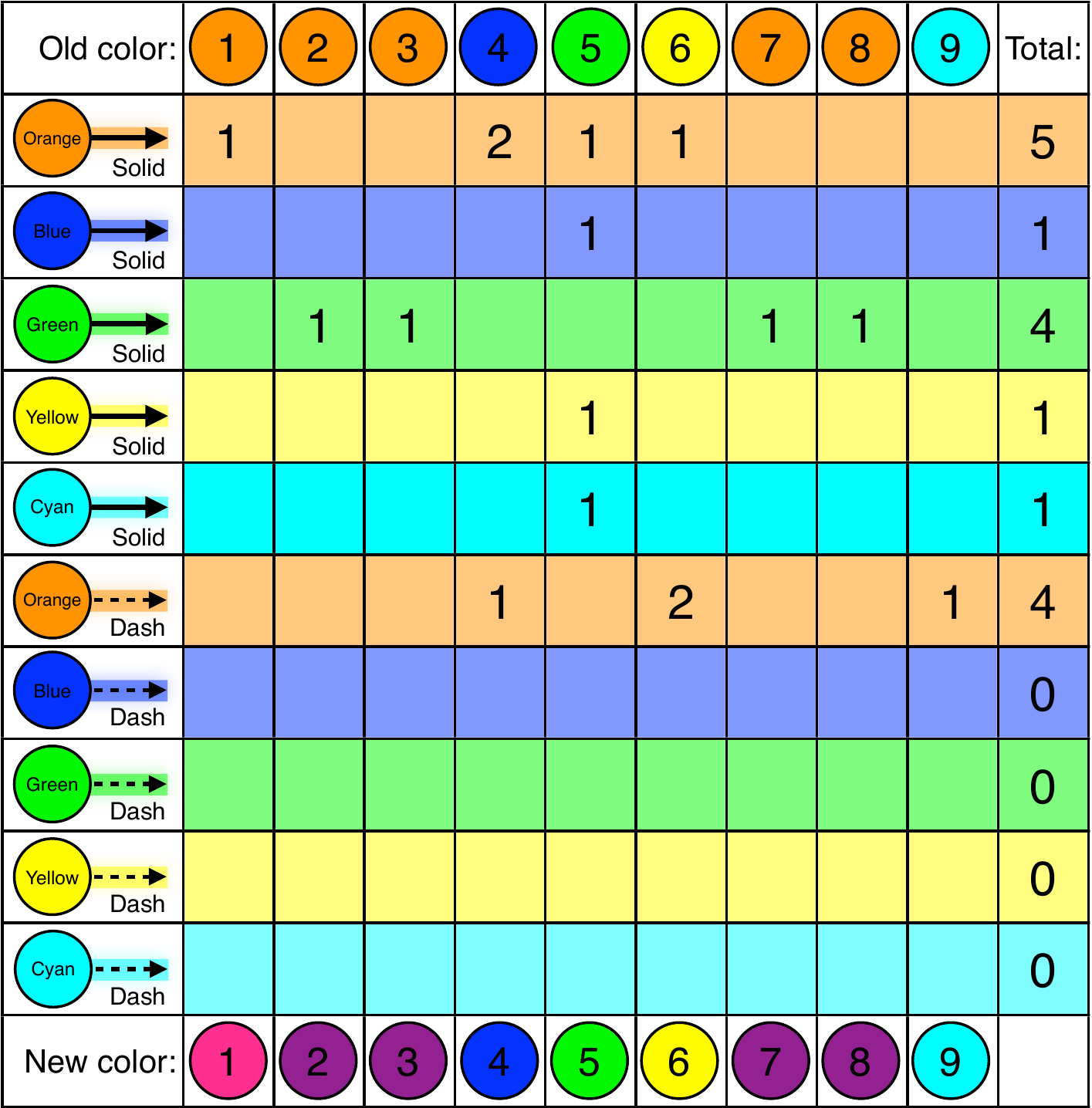}} \\
& New partition: (1)(2378)(4)(5)(6)(9) \\
\end{tabular}
\end{center}
\caption{Finding Top Lattice Node (Step 2), refinement of partition $(12378)(4)(5)(6)(9)$.}
\label{fig:appendix-gbh2}
\end{figure}

We observe six unique input combinations, giving six colors in the new node partition. The colors shown are arbitrary, and in the implementation are simply integers assigned incrementally. For this example we have reused the colors blue, green, yellow and cyan since those node groupings are unchanged. The former orange nodes have now been divided into pink and purple nodes.

Notice that of the ten possible input types tabulated here, the last four are absent. The Aldis algorithm avoids counting these.

\subsection*{Step 3}

There are now six node colors (pink, purple, blue, green, yellow and cyan), so with two arrow types (solid and dashed) we consider twelve input types ($6{\times}2=12$; solid from pink, $\ldots$, solid from cyan, dashed from pink, $\ldots$, dashed from cyan). See Figure~\ref{fig:appendix-gbh3}. At this iteration the partition of nodes is unchanged, and the algorithm halts. This gives the top lattice node.

\begin{figure}[h!]
\begin{center}
\begin{tabular}{lc}
& Old partition: (1)(2378)(4)(5)(6)(9) \\
\multirow{1}{*}[7.75cm]{\includegraphics[height=3cm]{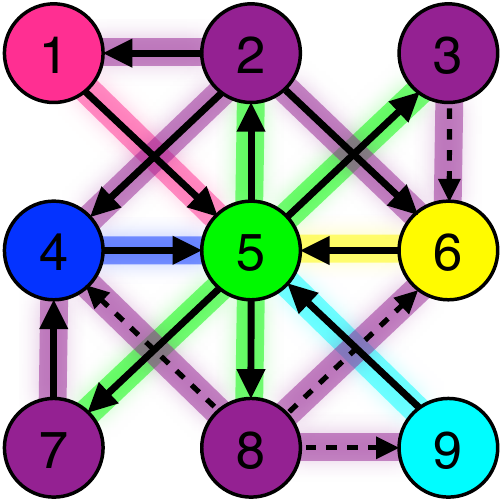}} &
{\includegraphics[height=8.75cm]{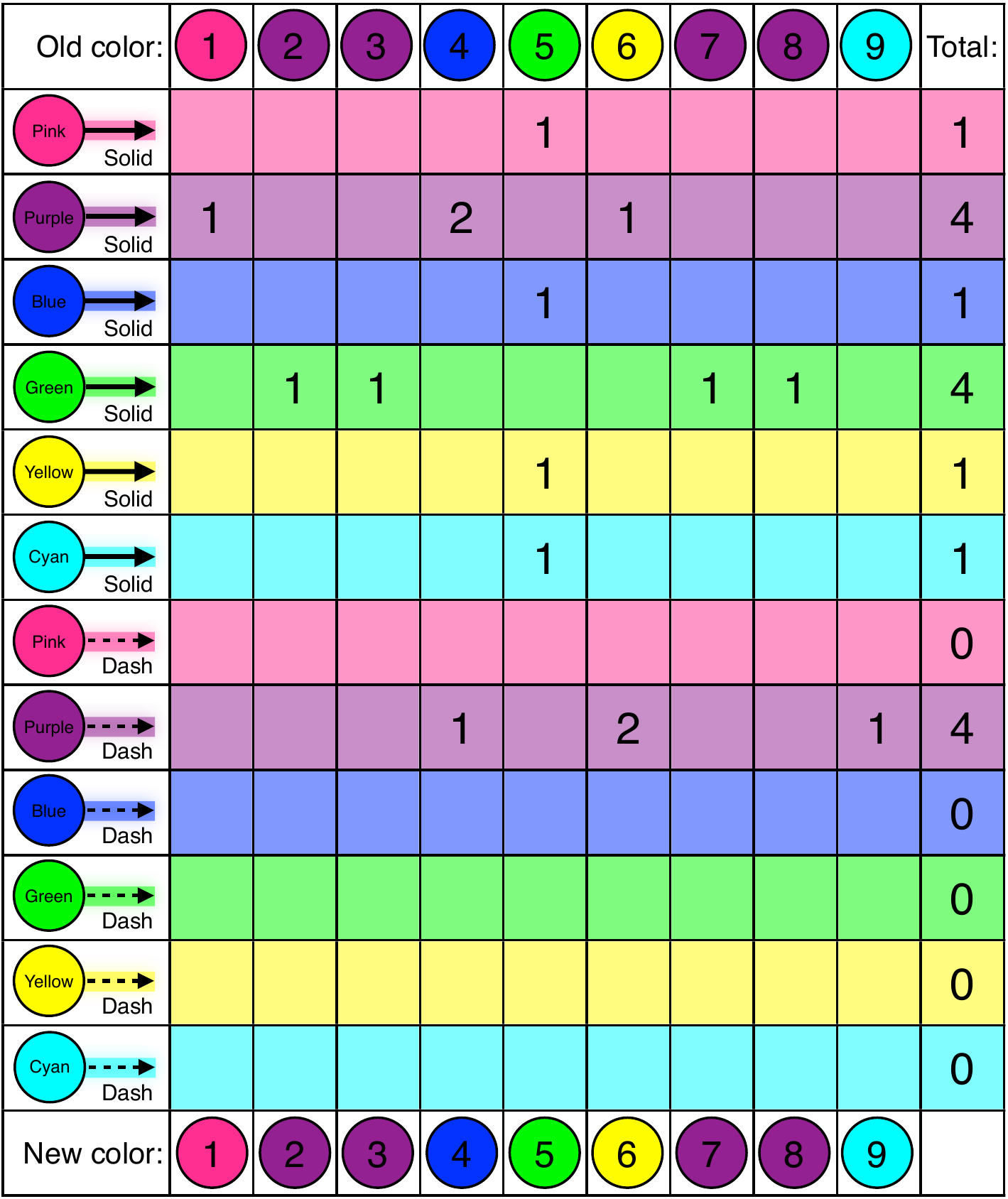}} \\
& New partition: (1)(2378)(4)(5)(6)(9) \\
\end{tabular}
\end{center}
\caption{Finding Top Lattice Node (Step 3), halts at partition $(1)(2378)(4)(5)(6)(9)$.}
\label{fig:appendix-gbh3}
\end{figure}

As in the previous step, some of the possible input types tabulated do not occur (five out of twelve), and the Aldis algorithm avoids counting these.

\subsubsection*{Remark}
When comparing the tables in steps 2 and 3, reusing the same color for node groups preserved between iterations highlights that the blue, green, yellow and cyan rows are unchanged. In this example at step 3 only the new pink and purple rows need be calculated (replacing the orange rows in step 2). This suggests a possible speed optimization when finding the top lattice node.

\subsubsection*{Differences between Aldis (2008) and our implementation}
In the above we have noted that the Aldis algorithm avoids computing the zero rows present in our tally tables. This is done by an additional phase in each iteration which tracks the arrow type and tail node color combinations as arrow equivalence classes, shown schematically in Figure~\ref{fig:appendix-aldis-tree}. This figure shows ancestry trees of the node partitions (left) and the observed combinations of arrow type (solid or dashed) with tail node color (right). By tracking the observed arrow type and tail-node color combinations explicitly, absent potential combinations need not be counted (i.e. dashed arrows from blue, green, yellow, cyan or pink nodes).

\begin{figure}[h!]
\begin{center}
\includegraphics[height=3.5cm]{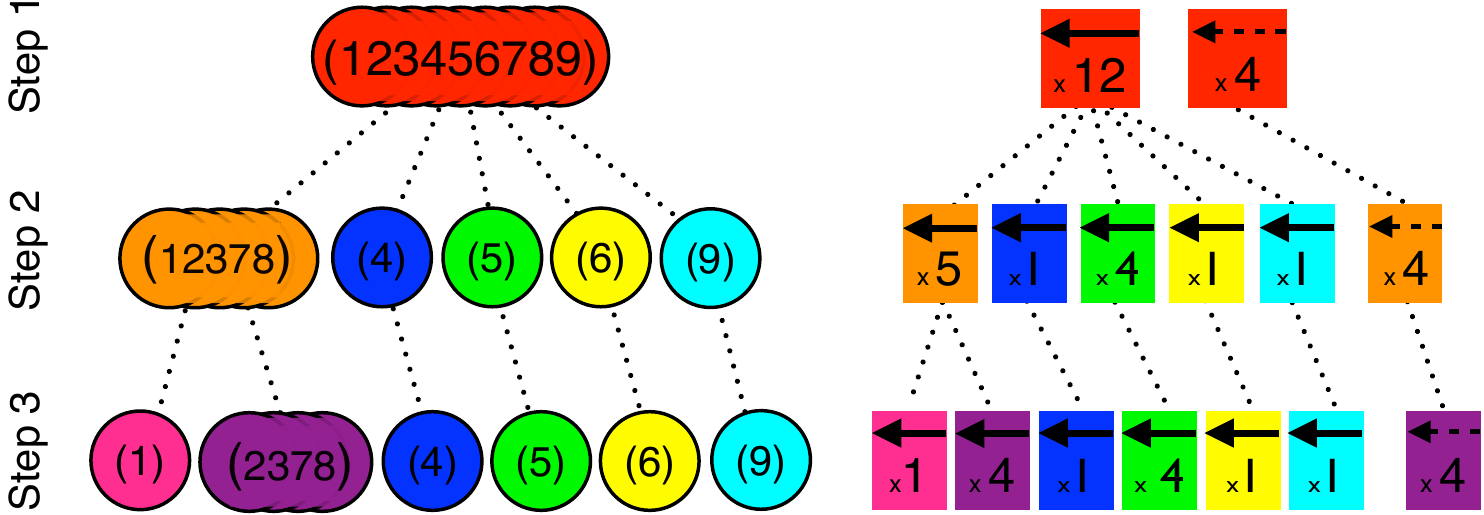}
\end{center}
\caption{Tree of node and arrow partitions as used in the Aldis algorithm. The node partitions on the left are $(123456789)$ at step 1 (all red), $(12378)(4)(5)(6)(9)$ at step two (five colors), and finally $(1)(2378)(4)(5)(6)(9)$ at step 3 (six colors). The arrow partitions are shown in the disjoint tree on the right, using our notation combining the arrow type (solid or dashed) and tail-node partition color. The numbers in each box indicate the number of arrows of that type from nodes of that color, for example in step 1 there are 12 solid arrows from red nodes.}
\label{fig:appendix-aldis-tree}
\end{figure}

The solid arrow tree and the dashed arrow trees in Figure~\ref{fig:appendix-aldis-tree} (right) are both sub-trees of the node partition tree (left). Our approach can be viewed as implicitly using the full node partition tree for each arrow type, at the cost of including redundant zero branches. This is a tradeoff between algorithmic complexity (Aldis) versus additional memory and computational overhead (not noticeable on the graph sizes considered). We expect the approach of Aldis to be most beneficial in large networks with many arrow types.

\end{document}